\numberwithin{equation}{section}
\newcommand{\fnum}{\mathbb{F}}
\DeclareMathOperator{\tr}{tr}
\DeclareMathOperator{\norm}{N}
\DeclareMathOperator{\moda}{mod}
\newcommand{\bases}[1]{\langle #1 \rangle}
\DeclareMathOperator{\re}{Re}
\newtheorem{theorem}{Theorem}[section]
\newtheorem{lemma}{Lemma}[section]
\newtheorem{corollary}{Corollary}[section]
\newtheorem{definition}{Definition}[section]
\theoremstyle{nonumberplain}
\newtheorem{proof}{Proof}
\begin{document}

\title{On the Polynomial Ramanujan Sums \\over Finite Fields}

\date{}

\author{Zhiyong Zheng\footnote{This work was partially supported by the ``973'' project 2013CB834205.}}
\affil{School of Mathematics and Systems Science, \authorcr
Beihang University, Beijing, P.R. China \authorcr
zhengzhiyong@buaa.edu.cn \vspace{4mm}}

\maketitle
\begin{abstract}
The polynomial Ramanujan sum was first introduced by Carlitz \cite{ref:7}, and a generalized version by Cohen \cite{ref:10}. In this paper, we study the arithmetical and analytic properties of these sums, derive various fundamental identities, such as H\"{o}lder formula, reciprocity formula, orthogonality relation and Davenport--Hasse type formula. In particular, we show that the special Dirichlet series involving the polynomial Ramanujan sums are, indeed, the entire functions on the whole complex plane, we also give a square mean values estimation. The main results of this paper are new appearance to us, which indicate the particularity of the polynomial Ramanujan sums. 
\end{abstract}

\noindent{\small {\bf 2010 Mathematics Subject Classification: }Primary 11T55, 11T24, Second 11L05}

\noindent{\small {\bf Key words: }Polynomial Ramanujan Sums, Finite Fields, Reciprocity Formula, Orthogonality Relation, Davenport--Hasse's Type Formula}

\section{Introduction and Results Statement}
One hundred years ago, Ramanujan was the first to appreciate the importance of the following exponential sum
\begin{equation}
  \label{eq:expSum}
  c(m, n) = \sum_{\substack{d=1, \\(d,n)=1}}^{n}e(\frac{md}{n}),
\end{equation}
where $e(x) = e^{2\pi i x}$, $n$ and $m$ are positive integers. His interest in this sum originated in his desire to obtain expression for a variety of well-known arithmetical functions in the form of a series $\sum_n a_nc(m,n)$, in particular, he obtained some very famous identities, for example (see \cite{ref:39})
\begin{equation}
  \label{eq:famousIden}
  \sum_{n=1}^{+\infty}\frac{c(m,n)}{n}=0, \quad \sum_{n=1}^{+\infty}\frac{c(m,n)}{m}=-\Lambda(n), \mbox{ and } c(m,n)=\sum_{d|n, d|m}d \mu(\frac{n}{d}),
\end{equation}
where $\mu(n)$ is the M\"{o}bius function, $\Lambda(n)$ is the Mangoldt function, and the last equality is usually said to be the Kluyer's equality. Following him, many other authors were also interested in this fascinating sum \cite{ref:6,ref:7,ref:34}, especially it makes surprising appearances in singular series of the Hardy--Littlewood asymptotic formula for Waring problems and in the asymptotic formula of Vingradov on sums of three primes. For details the reader is referred to \cite{ref:18}. 

Many mathematicians later tried to generalize this sum to find more and more applications. One of the most popular generalization was given by E. Cohen in \cite{ref:10,ref:11,ref:12} that
\begin{equation}
  \label{eq:AppCohen}
  c_k(m, n) = \sum_{\substack{d=1, \\(d,n^k)_k=1}}^{n}e(\frac{md}{n^k}),
\end{equation}
where the g.c.d function $(a,b)_k$ is the greatest common $k$-th power divisors of the integers $a$ and $b$. In \cite{ref:10} and \cite{ref:12}, Cohen presented an analogue of the Kluyer and H\"{o}lder formula for the above generalized Ramanujan sums
\begin{equation}
  \label{eq:AppCohen2}
  c_k(m, n) = \sum_{d|n, d^k|m}d^k \mu(\frac{n}{d}) = \phi_k(n)\mu(N)\phi_k^{-1}(N),
\end{equation}
where $N^k = \frac{n^k}{(m, n^k)_k}$, and $\phi_k(n)$ is the Jordan totient function given by the following product expression ($z$ is a complex number)
\begin{equation}
  \label{eq:AppCohen3}
  \phi_z(n) = n^z \prod_{\substack{p|n\\ p ~prime}}(1-\frac{1}{p^z}).
\end{equation}

Various other generalization were discussed in many papers including that of \cite{ref:3,ref:9,ref:30,ref:33,ref:40,ref:48}. Here we mention another interesting result, namely reciprocity formula. In \cite{ref:24,ref:25}, K. Johenson showed that
\begin{equation}
  \label{eq:recip}
  \frac{\mu(\bar{m})c(nm^*, m)}{m^*} = \frac{\mu(\bar{n})c(mn^*, n)}{n^*},
\end{equation}
where $\bar{n}$ denotes the largest square-free divisions of $n$, and $n^* = \frac{n}{\bar{n}}$. 

In more recent years, people have more and more interests in this sum, it appeared in various other seemingly unrelated problems. In Algebra, Ramanujan sums as the super characters exhibit a new application of the theory of super characters \cite{ref:20}, which recently developed by Diaconis--Issacs and Andre' \cite{ref:15}, and Ramanujan sums in arithmetical semigroup \cite{ref:22}. In number theory, Ramanujan sums appeared in the study of Waring type formula \cite{ref:28}, the distribution of rational numbers in short interval \cite{ref:26}, equire partition modulo odd integers \cite{ref:5}, the certain weighted average \cite{ref:1,ref:2,ref:8,ref:21,ref:32,ref:42,ref:47}, graph theory \cite{ref:17}, symmetry classes of tensors \cite{ref:45}, combinatorics \cite{ref:41}, cyclotomic polynomials \cite{ref:19,ref:31,ref:46}, and Mahler matrices \cite{ref:29}. In physics, Ramanujan sums have applications in the processing of low-frequency noise \cite{ref:37}, and of long-period sequences \cite{ref:36}, and in the study of quantum phase locking \cite{ref:38}. 

The main purpose of this paper is to generalize the Ramanujan sums to the polynomial case, and discuss various analogue properties of the classical case. The polynomial Ramanujan sum was first introduced by Carlitz in \cite{ref:7}, and a generalized version by E. Cohen in \cite{ref:10}. To state their definition, let $\fnum_q$ be a finite field with $q=p^l$ elements, where $p$ is a prime number, $\fnum_q[x]$ be the polynomial ring. Suppose $H$ is a fixed polynomial in $\fnum_q[x]$, and $H \neq 0$, we first choose a complex-valued character of the additive group of the residue class ring $\fnum_q[x]/\bases{H}$, these characters are said to be the additive characters modulo $H$ on $\fnum_q[x]$. If $A$ is in $\fnum_q[x]$, let $A \equiv a_{m-1}x^{m-1} + \cdots + a_1x + a_0 ~(\moda H)$, where $m=\deg(H)$, we set an additive function modulo $H$ on $\fnum_q[x]$ by $t(A) = a_{m-1}$. Then, for any $A$ and $B$ in $\fnum_q[x]$, we have $t(A+B) = t(A) + t(B)$, and $t(A) = t(B)$ if $A \equiv B~(\moda H)$, in particular, $t(A) = 0$ whenever $H|A$. To generalize this $t$-function modulo $H$, for any given polynomial $G$ in $\fnum_q[x]$, we let $t_G(A) = t(GA)$. Clearly, $t_G$ is also an additive function modulo $H$, that is
\begin{equation}
  \label{eq:tG}
  t_G(A+B) = t_G(A) + t_G(B), \mbox{ and } t_G(A) = t_G(B) \mbox{ if } A \equiv B~(\moda H).
\end{equation}

Next, let $\lambda$ be a fixed non-principal character on $\fnum_q$, for example, one may choose $\lambda(a) = e(\frac{\tr(a)}{p})$ for $a\in \fnum_q$, where $\tr(a)$ is the trace map from $\fnum_q$ to $\fnum_p$. We define a complex-valued function $E(G, H)$ on $F_q[x]$ by 

\begin{equation}
  \label{eq:EGH}
  E(G, H)(A) = \lambda(t_G(A)).
\end{equation}
It is easy to see that $E(G, H)$ is an additive character modulo $H$ on $\fnum_q[x]$, and 
\begin{equation}
  \label{eq:EGHprop}
  E(G, H)(A) = E(A, H)(G). 
\end{equation}
The polynomial Ramanujan sum modulo $H$ on $\fnum_q[x]$ is given by (see Carlitz \cite[4.1]{ref:7})
\begin{equation}
  \label{eq:rama}
  \eta(G, H) =   \sum_{\substack{D \moda H, \\(D,H)=1}} E(G, H)(D),
\end{equation}
where the summation extends over a complete residue system modulo $H$ in $\fnum_q[x]$. If $k\geq 1$ is a fixed integer, the generalized version of Cohen (see \cite[3.3]{ref:10}) is the following
\begin{equation}
  \label{eq:ramaGen}
  \eta_k(G, H) =   \sum_{\substack{D \moda H^k, \\(D,H^k)_k=1}} E(G, H^k)(D),
\end{equation}
where the summation ranges over a complete residue system modulo $H^k$ in $\fnum_q[x]$, and the g.c.d. function $(A, B)_k$ denotes the largest $k$-th power common divisor (monic) of the polynomials $A$ and $B$ in $\fnum_q[x]$. We set $(A, B)_1 = (A, B)$ the usual g.c.d.\ function. If $D \moda H^k$ with $(D,H^k)_k=1$, which is said to be a $k$-reduced residue system modulo $H$ according to Cohen \cite{ref:11,ref:12}. Clearly, $\eta_1(G, H) = \eta(G, H)$. 

By the above notations, it is easy to verify (see \cite[(3.4) and (3.5)]{ref:10}) that 
\begin{equation}
  \label{eq:ramaPro1}
  \eta_k(G_1, H) = \eta_k(G_2, H), \mbox{ if } G_1 \equiv G_2~(\moda H^k),
\end{equation}
\begin{equation}
  \label{eq:ramaPro2}
  \eta_k(G, H_1H_2) = \eta_k(G, H_1)\eta_k(G, H_2), \mbox{ if } (H_1, H_2)=1,
\end{equation}
and 
\begin{equation}
  \label{eq:ramaPro3}
  \eta_k(G, H) = \sum_{D|H, D^k|G}|D|^k \mu(\frac{H}{D}), 
\end{equation}
where and later, $\mu(H)$ is the M\"{o}bius function on $\fnum_q[x]$, $|D| = q^{\deg(D)}$ is the absolute value function on $\fnum_q[x]$, and $D|H$ means $D$ is a monic divisor of $H$, and $\sum_{D|H}$ means $D$ extending over all of monic divisors of $H$. 

In other hands, we note that the additive characters $E(G,H)$ given by \eqref{eq:EGH} are, indeed, all of the additive characters $\psi$ modulo $H$, in other words, for any additive character $\psi$ modulo $H$, there exists a unique polynomial $G$ in $\fnum_q[x]$, such that $\psi = E(G, H)$, and $\deg(G) < \deg(H)$ (See Lemma \ref{lem:21} below). Therefore, the polynomial Ramanujan sums $\eta(G, H)$ and $\eta_k(G,H)$ coincide with the classical sums $c(m,n)$ and $c_k(m,n)$ respectively. 

The first result of this paper is to derive an analogue of H\"{o}lder formula for the polynomial sums. There is no essential difficulty to do this, but we make use of a simpler method to show (see Theorem~\ref{thm:holder} below) that
\begin{equation}
  \label{eq:Holder}
  \eta(G, H) = \phi(H)\mu\left(\frac{H}{(G, H)}\right)\phi^{-1}\left(\frac{H}{(G, H)}\right), \mbox{ and } \eta_k(G, H)=\phi_k(H)\mu(N)\phi_k^{-1}(N),
\end{equation}
where $\phi(H)$ is the Euler totient function, and $\phi_k(H)$ is the Jordan totient function on $\fnum_q[x]$, and $N^k = \frac{H^k}{(G, H)_k}$. 

The second result is to present an analogue of the reciprocity formula for the polynomial Ramanujan sums (see Theorem~\ref{thm:reci1} below). Let $\bar{H}$ be the largest square-free divisor of $H$, and $H^* = \frac{H}{\bar{H}}$, then we have 
\begin{equation}
  \label{eq:reci}
  \frac{\mu(\bar{H})\eta(GH^*, H)}{|H^*|} = \frac{\mu(\bar{G})\eta(HG^*, G)}{|G^*|}. 
\end{equation}

The generalized sums $\eta_k(G, H)$ seemingly cannot share this kind of formula when $k>1$, however we derive the second reciprocity formula for $\eta_k(G, H)$ (see Theorem~\ref{thm:reciK} below): if $D_1^{k}|H$ and $D_2^k | H$, then
\begin{equation}
  \label{eq:reciK}
  \phi_k(D_2)\eta_k(\frac{H}{D_2^k}, D_1) =  \phi_k(D_1)\eta_k(\frac{H}{D_1^k}, D_2).
\end{equation}

The importance of (1.17) lies in the fact that it is equivalent to the H\"{o}lder formula (1.15), and plays a role in the proof of the orthogonal relation formula (4.13) below. 

The main results of this paper are the following theorems on the special Dirichlet series involving in the polynomial Ramanujan sums. Let $\mathbb{A}$ be the set of monic polynomials of $\fnum_q[x]$. We define 
\begin{equation}
  \label{eq:Diri}
  \delta_k(s, G) = \sum_{H \in \mathbb{A}}\frac{\eta_k(G, H)}{|H|^s} = \sum_{n=0}^{+\infty}A(n)q^{-ns}, 
\end{equation}
and
\begin{equation}
\label{eq:Diri2}
\tag{1.18'}
  \tau_k(s, G) = \sum_{G \in \mathbb{A}}\frac{\eta_k(G, H)}{|G|^s} = \sum_{n=0}^{+\infty}B(n)q^{-ns}, 
\end{equation}
where $s = c+it$ is a complex number, and 
\begin{equation*}
  A(n) = \sum_{\substack{H\in \mathbb{A} \\ \deg(H)=n}}\eta_k(G, H), \quad B(n) = \sum_{\substack{G\in \mathbb{A} \\ \deg(G)=n}}\eta_k(G, H)
\end{equation*}

The last two infinite series in \eqref{eq:Diri} and \eqref{eq:Diri2} are the definitions of $\delta_k(s, G)$ and $\tau_k(s, H)$, which tell us how to understand the special Dirichlet series in the middle of \eqref{eq:Diri} and \eqref{eq:Diri2}. We show that
\begin{theorem}\label{thm:11}
  If $H^k\nmid G$, then $\delta_k(s, G)$ is an entire function on the whole complex plane, and we have
  \begin{equation}
    \label{eq:thm11-1}
    \delta_k(s, G) = (1-q^{1-s})\sum_{D^k|G}|D|^{k-s}. 
  \end{equation}
\end{theorem}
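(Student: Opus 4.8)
The plan is to start from the H\"older-type evaluation \eqref{eq:ramaPro3}, namely $\eta_k(G,H)=\sum_{D\mid H,\; D^k\mid G}|D|^k\mu(H/D)$, and substitute it into the defining Dirichlet series \eqref{eq:Diri}. Interchanging the two summations (which is legitimate as a formal power-series manipulation in $q^{-s}$, since for each fixed $D$ only finitely many $H$ of each degree contribute, or equivalently convergent for $\re(s)$ large), I would write
\begin{equation*}
  \delta_k(s,G)=\sum_{H\in\mathbb{A}}\frac{1}{|H|^s}\sum_{\substack{D\mid H\\ D^k\mid G}}|D|^k\mu\!\left(\frac{H}{D}\right)
  =\sum_{\substack{D\in\mathbb{A}\\ D^k\mid G}}|D|^k\sum_{\substack{E\in\mathbb{A}}}\frac{\mu(E)}{|DE|^s},
\end{equation*}
where I put $H=DE$ with $E\in\mathbb{A}$. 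The inner sum factors as $|D|^{-s}\sum_{E\in\mathbb{A}}\mu(E)|E|^{-s}$, and the classical fact that $\sum_{E\in\mathbb{A}}|E|^{-s}=\sum_{n\ge0}q^{n}q^{-ns}=(1-q^{1-s})^{-1}$ (the zeta function of $\fnum_q[x]$) gives $\sum_{E\in\mathbb{A}}\mu(E)|E|^{-s}=1-q^{1-s}$. Collecting the pieces yields exactly \eqref{eq:thm11-1}:
\begin{equation*}
  \delta_k(s,G)=(1-q^{1-s})\sum_{\substack{D\in\mathbb{A}\\ D^k\mid G}}|D|^{k-s}.
\end{equation*}

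The remaining point is that the right-hand side is an \emph{entire} function, which is immediate precisely under the hypothesis $H^k\nmid G$: wait --- more carefully, the relevant finiteness is that $\{D\in\mathbb{A}:D^k\mid G\}$ is a finite set whenever $G\ne0$, so $\sum_{D^k\mid G}|D|^{k-s}$ is a finite $\mathbb{C}$-linear combination of functions $q^{-s\deg D}$, each entire; multiplying by the polynomial (in $q^{-s}$) factor $1-q^{1-s}$ keeps it entire. I should double-check where the hypothesis $H^k\nmid G$ is actually used: it is needed to guarantee the series $\sum_{H}\eta_k(G,H)|H|^{-s}$ is not identically handled by a divergent-looking tail --- in fact it ensures that $\eta_k(G,H)$ does not degenerate (if $H^k\mid G$ then $\eta_k(G,H)=\phi_k(H)$ and the sum $\sum_H\phi_k(H)|H|^{-s}$ has a genuine pole, contradicting entireness), so the hypothesis is exactly the condition that separates the two cases; I would insert a remark confirming that under $H^k\nmid G$ the two $H$-sums above agree after rearrangement on the region of absolute convergence and then invoke analytic continuation/identity of power series to extend to all of $\mathbb{C}$.

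The main obstacle, such as it is, is bookkeeping on the interchange of summation and on pinning down the exact role of $H^k\nmid G$. I expect the cleanest route is: (i) prove the identity \eqref{eq:thm11-1} first for $\re(s)$ sufficiently large where every rearrangement is justified by absolute convergence of $\sum_{E\in\mathbb{A}}|\mu(E)||E|^{-\re(s)}\le\sum_{E}|E|^{-\re(s)}<\infty$; (ii) observe the right-hand side of \eqref{eq:thm11-1} is entire (finite sum); (iii) conclude by the identity theorem that the coefficients $A(n)$ defined in \eqref{eq:Diri} are exactly the power-series coefficients of $(1-q^{1-s})\sum_{D^k\mid G}|D|^{k-s}$, hence $\delta_k(s,G)$ extends to an entire function given by the closed form. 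I would also record, as a sanity check feeding later sections, the two extreme coefficients: the lowest term is $1$ (from $D=1$), and the formula visibly vanishes when $H^k\mid G$ is replaced by its negation in the sense that no pole at $s=1$ survives. No deep input beyond \eqref{eq:ramaPro3} and the zeta function of $\fnum_q[x]$ is needed.
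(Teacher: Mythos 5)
Your proposal is correct and follows essentially the same route as the paper: substitute Kluyver's formula \eqref{eq:ramaPro3} into the series, interchange summations, use $\sum_{E\in\mathbb{A}}\mu(E)|E|^{-s}=\zeta_{\mathbb{A}}^{-1}(s)=1-q^{1-s}$ for $\re(s)$ large, and then identify the coefficients $A(n)$ with those of the resulting polynomial in $q^{-s}$ to conclude the series is a finite sum, hence entire. The paper merely makes your step (iii) explicit by computing $A(n)=q^{nk}(\gamma(G,n)-q^{1-k}\gamma(G,n-1))$ and observing it vanishes for $n-1>\deg(G)/k$; your reading of the hypothesis as simply $G\neq 0$ (so that $\{D: D^k\mid G\}$ is finite) is also the right one.
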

In particular, we have
\begin{equation}
  \label{eq:thm11-2}
    \delta_k(1, G) = \sum_{H \in \mathbb{A}}\frac{\eta_k(G,H)}{|H|}=0,
\end{equation}
and for any real numbers $c$ and $T>0$, we also have the following square mean value estimation
\begin{equation}
  \label{eq:thm11-3}
  \frac{1}{2T}\int_{-T}^{T}|\delta_k(c+it, G)|^2 dt = (1+q^{2(1-c)})\sigma_0(2(k-c), G) - 2q^{1-k}\sigma_1(2(k-c), G)+O(\frac{1}{T}),
\end{equation}
where ($x$ is a real number)
\begin{equation}
  \label{eq:thm11-4}
  \sigma_0(x, G) = \sum_{D^k|G}|D|^x, \mbox{ and }\sigma_1(x, G)=\sum_{\substack{D_1^k|G, D_2^k|G\\ \deg(D_1) = \deg(D_2)+1}}|D_1|^x,
\end{equation}
and the constant implied by ``$O$" depends on $q$, $G$, and $c$ only. 

If $H^k|G$, then $E(G, H^k)$ is the principal additive character modulo $H^k$, and $\eta_k(G, H) = \phi_k(H)$ by (1.11). It is easy to see that
\begin{equation}
  \label{eq:thm11-5}
  \delta_k(s, G) = \zeta_{\mathbb{A}}^{-1}(s) \zeta_{\mathbb{A}}(s-k), \mbox{ if } \re(s)>k+1,
\end{equation}
where $\zeta_{\mathbb{A}}$ is the zeta function on $\fnum_q[x]$ given by 
\begin{equation}
  \label{eq:zeta}
  \zeta_{\mathbb{A}}(s) = \sum_{H\in \mathbb{A}}\frac{1}{|H|^s}.
\end{equation}
It is well-known (see \cite[Chapter 2]{ref:43}) that
\begin{equation}
  \label{eq:zeta2}
  \zeta_{\mathbb{A}}(s) = (1-q^{1-s})^{-1}, \mbox{ if } \re(s)>1.
\end{equation}
Therefore, $\eta_k(s, G)$ has a simple pole $s=k+1$ with residue $\frac{1}{\zeta_{\mathbb{A}}(k+1)\log q}$ at $s=k+1$, when $H^k|G$.

\begin{theorem}\label{thm:12}
  If $H$ is a positive degree polynomial in $\fnum_q[x]$, then $\tau_k(s, H)$ is an entire function on the whole complex plane, and we have
  \begin{equation}
    \label{eq:thm12-1}
    \tau_k(s, H) = (1-q^{1-s})^{-1}\phi_{k(1-s)}(H),
  \end{equation}
where $\phi_{k(1-s)}(H)$ is the generalized Jordan totient function given by 
\begin{equation}
  \label{eq:thm12-2}
  \phi_z(H) = |H|^z \prod_{\substack{P|H\\ P ~irreducible}}(1-\frac{1}{|P|^z}).
\end{equation}
In particular, we have
\begin{equation}
  \label{eq:thm12-3}
  \tau_k(1, H) = \sum_{G\in \mathbb{A}}\frac{\eta_k(G, H)}{|G|} = -\frac{k\Lambda(H)}{\log q},
\end{equation}
where $\Lambda(H)$ is the Mangoldt function on $\fnum_q[x]$. Moreover, for any real numbers $c$ and $T$ with $c\neq 1$ and $T>0$ we have
\begin{equation}
  \label{eq:thm12-4}
  \begin{split}
& \frac{1}{2T}\int_{-T}^{T}|\tau_k(c+it, H)|^2 dt = |1-q^{2c(1-c)}|^{-1}\sum_{D|H}\mu^2(\frac{H}{D})|D|^{2(1-c)} + 
\\&2|1-q^{2(1-c)}|^{-1}\sum_{\substack{D_1|H, D_2|H\\\deg(D_1)>\deg(D_2)}} \mu(\frac{H}{D_1})\mu(\frac{H}{D_2})|D_1|^{(k+2)(1-c)}|D_2|^{(2-k)(1-c)} + O(\frac{1}{T}),    
  \end{split}   
\end{equation}
where the constant implied by ``$O$" depends on $q$, $H$, and $c$ only. 
\end{theorem}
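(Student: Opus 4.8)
The plan is to prove the closed form \eqref{eq:thm12-1} first and then read off the other three assertions from it. Since $\eta_k(G,H)$ depends only on $G$ modulo $H^k$, it is bounded in $G$, so the power series $\sum_{n\ge 0}B(n)q^{-ns}=\sum_{G\in\mathbb{A}}\eta_k(G,H)|G|^{-s}$ converges absolutely for $\re(s)>1$. In that range I would substitute the Kluyver-type identity \eqref{eq:ramaPro3}, interchange the (now absolutely convergent) sums over $G$ and over the monic divisors $D$ of $H$, and write each $G$ with $D^k\mid G$ as $G=D^kE$ with $E\in\mathbb{A}$. By \eqref{eq:zeta2}, $\sum_{E\in\mathbb{A}}|E|^{-s}=\zeta_{\mathbb{A}}(s)=(1-q^{1-s})^{-1}$, so
\begin{equation*}
  \tau_k(s,H)=(1-q^{1-s})^{-1}\sum_{D\mid H}\mu(H/D)\,|D|^{k(1-s)}.
\end{equation*}
The inner sum $\sum_{D\mid H}\mu(H/D)|D|^{z}$ is multiplicative in $H$ and equals $|P^{a}|^{z}-|P^{a-1}|^{z}$ on a prime power $P^{a}$, hence equals $|H|^{z}\prod_{P\mid H}(1-|P|^{-z})=\phi_{z}(H)$ in the notation \eqref{eq:thm12-2}; taking $z=k(1-s)$ yields \eqref{eq:thm12-1} for $\re(s)>1$.

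For the entireness I would pass to $w=q^{-s}$: expanding the product, $\phi_{k(1-s)}(H)$ is a polynomial in $q^{-s}$ (of degree $k\deg(H)$), while $(1-q^{1-s})^{-1}=(1-qw)^{-1}$ is meromorphic with simple poles exactly at $s_n=1+2\pi i n/\log q$, $n\in\mathbb{Z}$. At each $s_n$ one has $|P|^{-k(1-s_n)}=e^{2\pi i kn\deg(P)}=1$, so every factor $1-|P|^{-k(1-s)}$ of $\phi_{k(1-s)}(H)$ vanishes at $s_n$; since $\deg(H)\ge 1$ there is at least one irreducible $P\mid H$, so $\phi_{k(1-s)}(H)$ vanishes at every $s_n$, i.e. at $w=q^{-1}$. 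Thus $(1-qw)$ divides $\phi_{k(1-s)}(H)$ in $\mathbb{C}[w]$, the right side of \eqref{eq:thm12-1} is a polynomial in $q^{-s}$ (of degree $k\deg(H)-1$), in particular entire, and since the convergent series $\sum_n B(n)q^{-ns}$ agrees with it for $\re(s)>1$ it equals that polynomial everywhere. Hence $\tau_k(s,H)$ is entire and \eqref{eq:thm12-1} holds on all of $\mathbb{C}$.

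For \eqref{eq:thm12-3} I would expand near $s=1$. With $\epsilon=s-1$ one has $1-q^{1-s}=\epsilon\log q\,(1+O(\epsilon))$, $|H|^{k(1-s)}=1+O(\epsilon)$, and $1-|P|^{-k(1-s)}=-k\epsilon\deg(P)\log q\,(1+O(\epsilon))$ for each irreducible $P\mid H$; so if $H$ has $r\ge 1$ distinct irreducible divisors $P_1,\dots,P_r$ then $\phi_{k(1-s)}(H)=(-k\log q)^{r}\epsilon^{r}\prod_j\deg(P_j)\,(1+O(\epsilon))$, hence $\tau_k(s,H)=(-k\log q)^{r-1}\epsilon^{\,r-1}\prod_j\deg(P_j)\,(1+O(\epsilon))$. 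For $r\ge 2$ this gives $\tau_k(1,H)=0=-k\Lambda(H)/\log q$ (as $\Lambda(H)=0$ unless $H$ is a prime power), and for $H=P^{a}$ it gives $\tau_k(1,H)=-k\deg(P)=-k\Lambda(H)/\log q$ since $\Lambda(P^a)=\deg(P)\log q$.

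Finally, for the mean value \eqref{eq:thm12-4}: because $c\ne 1$ the line $\re(s)=c$ avoids every $s_n$, so there \eqref{eq:thm12-1} may be used in the M\"obius-sum form, giving
\begin{equation*}
  \tau_k(c+it,H)=\bigl(1-q^{1-c}e^{-it\log q}\bigr)^{-1}\sum_{D\mid H}\mu(H/D)\,q^{k\deg(D)(1-c)}e^{-ik\deg(D)t\log q}.
\end{equation*}
The factor $|1-q^{1-c}e^{-it\log q}|^{-2}$ is bounded (because $q^{1-c}\ne 1$) and $\tfrac{2\pi}{\log q}$-periodic in $t$, with an absolutely convergent expansion $\sum_{\nu\in\mathbb{Z}}\rho_\nu e^{i\nu t\log q}$ obtained by multiplying the two geometric series for $(1-q^{1-c}e^{\mp it\log q})^{-1}$ (using the ratio $q^{1-c}$ when $c>1$ and $q^{\,c-1}$ when $c<1$); the coefficients are explicit, of the form $\rho_\nu=q^{-|1-c|\,|\nu|}\big/|1-q^{2(1-c)}|$. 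Multiplying this by the finite double sum coming from $\bigl|\sum_{D\mid H}\cdots\bigr|^{2}$ and integrating termwise over $[-T,T]$, where $\tfrac1{2T}\int_{-T}^{T}e^{i\ell t\log q}\,dt$ equals $1$ when $\ell=0$ and is $O(1/T)$ otherwise — uniformly, as the $D$-sum is finite and $\sum_\nu|\rho_\nu|<\infty$ — kills every term except those with $\nu=k(\deg D_1-\deg D_2)$, leaving the main term
\begin{equation*}
  \sum_{D_1\mid H}\sum_{D_2\mid H}\mu(H/D_1)\mu(H/D_2)\,q^{k(1-c)(\deg D_1+\deg D_2)}\,\rho_{\,k(\deg D_1-\deg D_2)},
\end{equation*}
which one regroups according to whether $\deg D_1=\deg D_2$ or not, and to the sign of $1-c$ (which dictates how the weight $q^{-|1-c|k|\deg D_1-\deg D_2|}$ merges with $q^{k(1-c)(\deg D_1+\deg D_2)}$), to arrive at \eqref{eq:thm12-4}. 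The interchanges of summation, the prime-power evaluation of the M\"obius sum, and the Taylor expansion at $s=1$ are routine; I expect the main obstacle to be precisely this last regrouping — pinning down the Fourier coefficients $\rho_\nu$, tracking that they take different explicit shapes for $c>1$ and $c<1$, and reassembling them with the double M\"obius sum over the divisors of $H$ so as to recover the exact closed form of \eqref{eq:thm12-4} while keeping the tail uniformly $O(1/T)$.
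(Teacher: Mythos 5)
Your derivation of \eqref{eq:thm12-1}, the entireness, and the value at $s=1$ is correct, and your route to the closed form is genuinely different from the paper's: you substitute Kluyver's identity \eqref{eq:ramaPro3} directly and resum over $G=D^kE$, getting $(1-q^{1-s})^{-1}\sum_{D\mid H}\mu(H/D)|D|^{k(1-s)}$ in one step, valid already for $\re(s)>1$. The paper instead groups $G$ by $A^k=(G,H^k)_k$, invokes the H\"older formula, an auxiliary Euler product for $\sum_{(G,H^k)_k=1}|G|^{-s}$, and the identity of Lemma~\ref{lemma:46}, and obtains the same expression only for $\re(s)>k+1$. Your argument is shorter and bypasses the machinery of Section~4. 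Your entireness argument (that $1-qw$ divides the polynomial $\phi_{k(1-s)}(H)$ in $w=q^{-s}$ because $\sum_{D\mid H}\mu(H/D)=0$ for $\deg H\ge 1$) is equivalent to the paper's comparison of coefficients showing $B(n)=0$ for $n\ge k\deg H$, and your limit computation at $s=1$ agrees with the paper's, which uses \eqref{eq:lemma25p3} in place of the Euler product.

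The gap is in the mean-value part. Your Fourier set-up is sound and the coefficients $\rho_\nu=q^{-|1-c|\,|\nu|}/|1-q^{2(1-c)}|$ are correct, but you stop at exactly the step that matters, and if you carry it out you will find that it does \emph{not} ``arrive at \eqref{eq:thm12-4}''. Your main term is $|1-q^{2(1-c)}|^{-1}\sum_{D_1,D_2\mid H}\mu(H/D_1)\mu(H/D_2)\,q^{k(1-c)(d_1+d_2)-k|1-c|\,|d_1-d_2|}$ with $d_i=\deg D_i$. Its equal-degree part runs over \emph{all} pairs with $\deg D_1=\deg D_2$ (not only $D_1=D_2$) with weight $|D_1|^{2k(1-c)}$, and for $d_1>d_2$ the exponent collapses to $2k(1-c)d_1$ when $c>1$ and to $2k(1-c)d_2$ when $c<1$; none of this matches the printed prefactor $|1-q^{2c(1-c)}|^{-1}$, the diagonal sum $\sum_{D\mid H}\mu^2(H/D)|D|^{2(1-c)}$, or the off-diagonal weight $|D_1|^{(k+2)(1-c)}|D_2|^{(2-k)(1-c)}$. (Concretely, for $H=x(x-1)$ the pair $D_1=x$, $D_2=x-1$ contributes a nonzero equal-degree term that is absent from \eqref{eq:thm12-4}.) The source of the mismatch lies in the paper itself: in \eqref{eq:thm12p11} the factor $|D_1D_2|^{k(1-c)}$ is written as $|D_1D_2|^{1-c}$ and the oscillation $q^{k(d_2-d_1)it}$ as $q^{(d_2-d_1)it}$, the asymptotic \eqref{eq:thm12p24} cannot hold for $c<1$ (the left side is $O(T)$ uniformly while the claimed main term grows with $d_2-d_1$), and the final regrouping keeps only the $D_1=D_2$ part of the equal-degree diagonal. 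So the remaining work is not a routine regrouping toward \eqref{eq:thm12-4}: the target formula needs to be corrected before it can be proved, and your method, pushed through, yields the corrected version rather than the stated one.
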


If $\deg(H)=0$, then $\eta_k(G, H) = 1$ for any $G$ in $\fnum_q[x]$, it follows that $\tau_k(s, H) = \zeta_{\mathbb{A}}(s)$, which has a simple pole $s=1$ with residue $\frac{1}{\log q}$ at $s=1$. If $c=1$, the square mean value estimation is more complicated, and we will present it in another place. 

The polynomial Ramanujan sum $\eta(G, H)$ in fact is a special Gauss sums on $\fnum_q[x]$. In \cite{ref:49}, we presented an analogue of Davenport--Hasse's theorem for the polynomial Gauss sums (see \cite[Theorem 1.3]{ref:49}). In the last section of this paper, we show that the generalized polynomial Ramanujan sums $\eta_k(G, H)$ also share this kind of Davenport--Hasse's formula (see Theorem 7.1 below). 

Throughout this paper, $P$ denotes an irreducible polynomial in $\fnum_q[x]$, $D|H$ means that $D$ is a monic divisor of $H$, $\sum_{D|H}$ means $D$ extending over all of monic divisors of $H$, and $|H| = q^{\deg(H)}$ is the absolute value function on $\fnum_q[x]$. 

\section{Preliminaries}
\label{sec:pre}

We start this section by determining the construction of the additive character group modulo $H$ on $\fnum_q[x]$ via $E(G, H)$. 

\begin{lemma}\label{lem:21}
  For any $\psi$, an additive character modulo $H$ on $\fnum_q[x]$, there exists a unique polynomial $G$ in $\fnum_q[x]$, such that $\psi=E(G, H)$, and $\deg(G)< \deg(H)$. 
\end{lemma}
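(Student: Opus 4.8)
The plan is to establish the claim by a dimension/counting argument: the set of additive characters modulo $H$ forms a group isomorphic to the additive group of the quotient ring $R = \fnum_q[x]/\bases{H}$, which has exactly $|H| = q^{\deg(H)}$ elements, and the polynomials $G$ with $\deg(G) < \deg(H)$ are exactly a complete set of representatives for $R$, again $q^{\deg(H)}$ in number. So it suffices to prove that the map $G \mapsto E(G,H)$ is injective on this set of representatives; surjectivity onto the full character group then follows by counting. First I would recall that $E(G,H)$ is indeed an additive character modulo $H$ (already noted in the excerpt right after \eqref{eq:EGH}), so the map $G \mapsto E(G,H)$ lands in the right group and, by \eqref{eq:tG} together with the definition $t_G(A) = t(GA)$, it is a group homomorphism from $R$ (or from the representatives $\deg(G)<\deg(H)$) to $\widehat{R}$.

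For injectivity, suppose $E(G_1,H) = E(G_2,H)$ with both degrees below $\deg(H)$; setting $G = G_1 - G_2$ (still of degree $< \deg(H)$), it suffices to show that $E(G,H)$ being the principal character forces $G \equiv 0 \pmod H$, hence $G = 0$. Equivalently, I must show: if $\lambda(t_G(A)) = 1$ for all $A \in \fnum_q[x]$, then $H \mid G$. Here I would use \eqref{eq:EGHprop}, i.e. $E(G,H)(A) = E(A,H)(G)$, to rewrite the hypothesis as $\lambda(t(AG)) = 1$ for all $A$. The key computational step is to understand the $t$-function: writing $GA \equiv b_{m-1}x^{m-1} + \cdots + b_0 \pmod H$ with $m = \deg(H)$, we have $t_G(A) = b_{m-1}$, the coefficient of $x^{m-1}$. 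I would argue that if $G \not\equiv 0 \pmod H$, then by choosing $A$ appropriately — for instance multiplying $G$ by suitable powers of $x$ to shift its top nonzero coefficient (mod $H$) into the $x^{m-1}$ slot, and then scaling by a suitable element of $\fnum_q$ — one can make $b_{m-1}$ range over all of $\fnum_q$, in particular take a value $a$ with $\lambda(a) \neq 1$, since $\lambda$ is a non-principal character on $\fnum_q$. This contradicts the hypothesis, so $H \mid G$, proving injectivity.

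The main obstacle I anticipate is the careful bookkeeping in that last step: showing that the $\fnum_q$-linear functional $A \mapsto t_G(A) = [x^{m-1}](GA \bmod H)$ on the $\fnum_q$-vector space $R$ is nonzero whenever $G \not\equiv 0 \pmod H$. Cleanly, this is the statement that the pairing $(G, A) \mapsto t(GA)$ on $R \times R$ is nondegenerate; one slick way is to exhibit, for given $G \not\equiv 0$, a single $A$ with $t(GA) \neq 0$ — e.g. if $x^j$ ($j \le m-1$) is the highest-degree monomial actually appearing in the residue of $G$, take $A = x^{m-1-j}$, and check that the reduction of $x^{m-1-j} G$ modulo $H$ still has a nonzero $x^{m-1}$ coefficient (the leading behaviour is unaffected by reduction since the product already has degree $\le m-1$ when... ) — actually care is needed here because multiplication by $x^{m-1-j}$ may push the degree up to $2(m-1)-j \ge m$, forcing reduction; so instead I would take $A$ to be the reduction modulo $H$ of $x^{m-1}\cdot \overline{G}^{-1}$ when $G$ is a unit in $R$, and handle the non-unit case separately by factoring out $(G,H)$. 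Once $t(GA) \neq 0$ for some $A$, replacing $A$ by $cA$ for a suitable $c \in \fnum_q^\times$ and using $t(cGA) = c\, t(GA)$ lets me hit a value outside $\ker \lambda$. Then:

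\begin{lemma}\label{lem:21-restated}
  For any $\psi$, an additive character modulo $H$ on $\fnum_q[x]$, there exists a unique polynomial $G$ in $\fnum_q[x]$, such that $\psi=E(G, H)$, and $\deg(G)< \deg(H)$.
\end{lemma}

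Granting the nondegeneracy, uniqueness is immediate (injectivity on representatives) and existence follows since the homomorphism $G \mapsto E(G,H)$ from an $|H|$-element group to the character group $\widehat{R}$, which also has $|H|$ elements, must be an isomorphism.
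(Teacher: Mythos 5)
Your proposal is correct and follows essentially the same route as the paper: reduce to showing that $E(G,H)$ is principal only when $H\mid G$ (i.e.\ nondegeneracy of the pairing $(G,A)\mapsto t(GA)$), exhibit an explicit $A$ pushing a nonzero coefficient into the $x^{m-1}$ slot, scale by an element of $\fnum_q$ to escape $\ker\lambda$ using non-principality of $\lambda$, and finish by counting; the paper's witness is obtained by solving $GT\equiv ax^{m-1-k}(G,H)\pmod H$, which is just your ``factor out $(G,H)$'' fallback made explicit. Incidentally, your self-doubt about the first construction is unfounded: if $x^j$ is the top monomial of the residue $\bar G$, then $\deg(x^{m-1-j}\bar G)=m-1<\deg H$, so no further reduction occurs and $t_G(x^{m-1-j})$ equals the (nonzero) leading coefficient of $\bar G$.
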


\begin{proof}
  For the convenient sake, we write $\psi_G = E(G, H)$. By \eqref{eq:EGH}, we have $\psi_{G_1} = \psi_{G_2}$, if $G_1 \equiv G_2~(\moda H)$, hence we may set $G$ in a complete residue system modulo $H$ in $\fnum_q[x]$, so that $\deg(G)<\deg(H)$. Moreover, we have
  \begin{equation}
    \label{eq:lemma21p}
    \psi_{G_1 + G_2} = \psi_{G_1} \cdot \psi_{G_2}, \mbox{ and } \bar{\psi}_G = \psi_{-G},
  \end{equation}
where $\bar{\psi}_G$ is the usual conjugation of a complex number $\psi_G$. Since $\psi_G = \psi_0$, the principal additive character modulo $H$, if $G=0$, or $H|G$. Conversely, we have $\psi_G = \psi_0$ if and only if $H|G$. To show that statement, we see that $\lambda$ is a non-principal additive character on $\fnum_q$ by assumption, then there is an element $a$ in $\fnum_q$, so that $\lambda(a)\neq 1$. If $H\nmid G$, we let
\begin{equation}
  \label{eq:lemma21p2}
  R = (G, H) = x^k + a_{k-1}x^k + \cdots + a_1x + a_0\in \fnum_q[x], 
\end{equation}
where $0\leq k \leq m-1$, and $m=\deg(H)$. It follows that
\begin{equation*}
  ax^{m-1-k}R = ax^{m-1}+ \cdots.
\end{equation*}
We note the following congruent equation in variable $T$ that
\begin{equation}
  \label{eq:lemma21p3}
  GT \equiv ax^{m-1-k}R ~(\moda H)
\end{equation}
is solvable in $\fnum_q[x]$, therefore there exists a polynomial $A$ in $\fnum_q[x]$ such that
\begin{equation}
  \label{eq:lemma21p4}
  GA \equiv ax^{m-1}+\cdots. ~(\moda H),
\end{equation}
and $t_G(A) = t(GA) = a$, so $\psi_G(A) = \lambda(t_G(A)) = \lambda(a)\neq 1$, and $\psi_G \neq \psi_0$. By \eqref{eq:lemma21p}, we have immediately
\begin{equation}
  \label{eq:lemma21p5}
  \psi_{G_1}\neq \psi_{G_2}, \mbox{ if } G_1 \not \equiv G_2~(\moda H)
\end{equation}
because if $\psi_{G_1} = \psi_{G_2}$ then $\psi_{G_1-G_2} = \psi_0$, and $G_1 \equiv G_2~(\moda H)$. This shows that $\psi_G$ are different from each other when $G$ running through a complete residue system of modulo $H$. Hence there are exactly $|H|=q^m$ different characters $\psi_G$, but the number of additive characters modulo $H$ on $\fnum_q[x]$ is exactly $q^m$, thus every character $\psi$ is just of the form $\psi_G$. We complete the proof of Lemma \ref{lem:21}. 
\end{proof}

Next two lemmas are not new, one may find them in Carlitz \cite{ref:7} (see \cite{ref:7}, (2.4), (2.5), and (2.6)), but we give a more explicit expression here. 
\begin{lemma}\label{lem:carlitz}
  If $A$ is a monic polynomial in $\fnum_q[x]$, then we have
  \begin{equation}
    \label{eq:lem22}
    E(GA, HA) = E(G, H).
  \end{equation}
\end{lemma}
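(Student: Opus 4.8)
The statement to prove is Lemma~\ref{lem:carlitz}: for a monic polynomial $A$, one has $E(GA, HA) = E(G, H)$ as additive characters modulo $HA$ on $\fnum_q[x]$. The plan is to unwind the definition of $E$ in \eqref{eq:EGH} and reduce everything to a claim about the $t$-function. Recall $E(GA, HA)(B) = \lambda(t_{GA}(B)) = \lambda(t(GAB))$, where $t(\cdot)$ on the left is the leading-coefficient-of-degree-$(\deg(HA)-1)$ functional attached to the modulus $HA$, while $E(G,H)(B) = \lambda(t(GB))$ with $t(\cdot)$ here attached to the modulus $H$. So the entire lemma comes down to showing that, for every $B$,
\begin{equation}
  \label{eq:carlitzplan}
  t_{HA}(GAB) = t_H(GB),
\end{equation}
where I write $t_H, t_{HA}$ to make the modulus explicit. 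That is, the coefficient of $x^{\deg(HA)-1}$ in the reduction of $GAB$ modulo $HA$ equals the coefficient of $x^{\deg(H)-1}$ in the reduction of $GB$ modulo $H$.

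First I would prove the key identity \eqref{eq:carlitzplan} by a division-algorithm argument. Write $\deg(H) = m$ and $\deg(A) = r$ (both well-defined; $A$ monic so its leading coefficient is $1$, which is what makes the degree bookkeeping clean). Perform division of $GB$ by $H$: $GB = QH + S$ with $\deg(S) < m$, so that by definition $t_H(GB)$ is the coefficient of $x^{m-1}$ in $S$. Multiply through by $A$: $GAB = QHA + SA$. Since $A$ is monic of degree $r$, we have $\deg(SA) = \deg(S) + r < m + r = \deg(HA)$, so $SA$ is already the remainder of $GAB$ upon division by $HA$; hence $t_{HA}(GAB)$ is the coefficient of $x^{m+r-1}$ in $SA$. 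Now I must check that multiplication by the monic polynomial $A$ of degree $r$ shifts the coefficient of $x^{m-1}$ in $S$ to the coefficient of $x^{m+r-1}$ in $SA$: indeed, writing $A = x^r + (\text{lower})$ and $S = s_{m-1}x^{m-1} + (\text{lower})$, the highest term of $SA$ is $s_{m-1}x^{m+r-1}$, and no lower-degree term of $S$ times a term of $A$ can reach degree $m+r-1$ since $\deg(S\cdot A) \le (m-2) + r < m+r-1$ for the lower terms. Thus the coefficient of $x^{m+r-1}$ in $SA$ is exactly $s_{m-1} = t_H(GB)$, which establishes \eqref{eq:carlitzplan}.

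From \eqref{eq:carlitzplan} the lemma follows at once: applying $\lambda$ to both sides gives $E(GA, HA)(B) = \lambda(t_{HA}(GAB)) = \lambda(t_H(GB)) = E(G,H)(B)$ for all $B \in \fnum_q[x]$, so the two characters agree. I would also remark that both sides are genuinely additive characters modulo $HA$ — for the left side this is immediate from \eqref{eq:tG}, and for the right side one notes that $E(G,H)$, though a priori a character modulo $H$, is also a character modulo $HA$ since $H \mid HA$ — so the equality is an honest equality of characters on the group $\fnum_q[x]/\bases{HA}$, not merely of functions.

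The only real subtlety, and the single place where I expect to have to be careful, is the monicity hypothesis on $A$: it is exactly what guarantees $\deg(SA) = \deg(S) + \deg(A)$ and that the top coefficient of $SA$ equals the top coefficient of $S$ (rather than being scaled by the leading coefficient of $A$), which is what makes the leading-coefficient functional transform in the clean shift-by-$\deg(A)$ way needed for \eqref{eq:carlitzplan}. If $A$ were not monic, the identity would instead pick up the leading coefficient of $A$ as a factor, breaking the statement; so the proof is really just the division algorithm plus this observation, and there is no deeper obstacle.
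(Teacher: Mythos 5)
Your proof is correct and follows essentially the same route as the paper: reduce $GB$ modulo $H$, multiply the congruence by $A$ to get the reduction of $GAB$ modulo $HA$, and use the monicity of $A$ to see that the coefficient of $x^{m-1}$ in the remainder reappears as the coefficient of $x^{m+r-1}$ after multiplication by $A$. Your version just spells out the division-algorithm bookkeeping that the paper leaves implicit.
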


\begin{proof}
  For any $B \in \fnum_q[x]$, let 
$$GB \equiv a_{m-1}x^{m-1} + \cdots + a_1x+a_0~(\moda H),$$
then 
$$AGB \equiv A(a_{m-1}x^{m-1} + \cdots +a_0)~(\moda AH).$$
Because $A$ is a monic polynomial, we see that the function $t_{GA}$ modulo $HA$ just is the function $t_G$ modulo $H$. It follows that
\begin{equation}
  \label{eq:lem22main}
  E(GA, HA)(B) = \lambda(t_G(B)) = E(G, H)(B),
\end{equation}
and the lemma follows immediately.
\end{proof}

\begin{lemma}\label{lem:twoCases}
  Suppose $A\in \fnum_q[x]$, then we have
  \begin{equation}
    \sum_{G\moda H}E(G, H)(A) = \left\{
      \begin{tabular}[l]{l}
        $|H|$, \mbox{ if }$H|A$,\\
        $0$, \mbox{otherwise},
      \end{tabular}\right.
  \end{equation}
where the summation extends over a complete residue system modulo $H$. 
\end{lemma}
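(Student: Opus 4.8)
The claim to prove is Lemma~\ref{lem:twoCases}: for $A\in\fnum_q[x]$,
\[
\sum_{G\bmod H}E(G,H)(A)=\begin{cases}|H|,&H\mid A,\\ 0,&\text{otherwise.}\end{cases}
\]

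\textbf{Plan.} The natural approach is to recognize $G\mapsto E(G,H)(A)=E(A,H)(G)$ (using \eqref{eq:EGHprop}) as running, via Lemma~\ref{lem:21}, over the values $\psi(G)$ of a single fixed character evaluated along a complete residue system, or equivalently to view $G\mapsto E(G,H)(A)$ as itself an additive character modulo $H$ in the variable $G$ and apply the standard orthogonality of characters of a finite abelian group. Concretely: first observe that by \eqref{eq:EGHprop} the sum equals $\sum_{G\bmod H}E(A,H)(G)$, so it is $\sum_{G\bmod H}\psi(G)$ where $\psi=E(A,H)$ is a (single, fixed) additive character modulo $H$ on $\fnum_q[x]$. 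Second, recall the general fact that for a character $\psi$ of the finite abelian group $\fnum_q[x]/\bases{H}$ one has $\sum_{G}\psi(G)=|H|$ if $\psi$ is principal and $0$ otherwise — the one-line proof being that if $\psi$ is non-principal there is $G_0$ with $\psi(G_0)\neq 1$, and $\psi(G_0)\sum_G\psi(G)=\sum_G\psi(G_0+G)=\sum_G\psi(G)$ forces the sum to vanish. Third, identify when $\psi=E(A,H)$ is principal: by the argument already given inside the proof of Lemma~\ref{lem:21} (the statement that $\psi_G=\psi_0$ if and only if $H\mid G$), we have $E(A,H)=\psi_0$ exactly when $H\mid A$. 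Assembling these three points gives the lemma.

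Alternatively, and perhaps cleaner for self-containedness, I would argue directly without invoking the full classification: write $E(G,H)(A)=\lambda(t_G(A))=\lambda\bigl(t(GA)\bigr)$. Fixing $A$ and letting $G$ run over a complete residue system modulo $H$, the product $GA$ runs over the principal ideal $(A)$ reduced modulo $\bases{H}$; the key sub-step is to understand the multiset $\{\,t(GA\bmod H): G\bmod H\,\}$. If $H\mid A$ then $GA\equiv 0$ for all $G$, every term is $\lambda(0)=1$, and the sum is $|H|=q^{\deg H}$. If $H\nmid A$, then as shown in \eqref{eq:lemma21p2}--\eqref{eq:lemma21p4} of Lemma~\ref{lem:21}'s proof there exists $G^\ast$ with $t_{G^\ast}(A)=a$ for any prescribed $a\in\fnum_q$ with $\lambda(a)\neq1$; using additivity of $t_{(\cdot)}(A)$ in the subscript (from \eqref{eq:tG}, since $t_{G_1+G_2}(A)=t((G_1+G_2)A)=t(G_1A)+t(G_2A)$), translation by $G^\ast$ permutes the residue system and multiplies the sum by $\lambda(a)\neq1$, whence the sum is $0$.

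\textbf{Expected main obstacle.} There is no deep obstacle; the only point requiring a little care is the ``otherwise'' case, i.e.\ producing a single $G$ with $E(G,H)(A)\neq 1$ when $H\nmid A$. This is precisely the content extracted from the middle of the proof of Lemma~\ref{lem:21}, where one takes $R=(A,H)$ of degree $k\le m-1$, uses solvability of the congruence $AT\equiv a x^{m-1-k}R\pmod H$ to find $G$ with $GA\equiv ax^{m-1}+\cdots\pmod H$, hence $t_G(A)=a$ and $\lambda(t_G(A))=\lambda(a)\neq1$. Once that witness is in hand, the character-sum collapse is the standard two-line argument. I would therefore structure the write-up as: (i) the $H\mid A$ case is immediate; (ii) for $H\nmid A$, quote the witness construction from Lemma~\ref{lem:21} and run the translation argument. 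The whole proof should be four or five lines.
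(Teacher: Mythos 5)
Your proof is correct and follows essentially the same route as the paper, which simply observes that by Lemma~\ref{lem:21} the sum is the standard orthogonality relation for the additive characters modulo $H$; your identification of when $E(A,H)$ is principal (namely iff $H\mid A$) is exactly the ingredient the paper implicitly draws from the proof of Lemma~\ref{lem:21}. Your alternative self-contained translation argument is just the usual proof of that orthogonality relation spelled out, so it does not constitute a genuinely different approach.
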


\begin{proof}
  By Lemma \ref{lem:21}, it just is the orthogonal relation formula, we have the lemma at once.  
\end{proof}

The following lemma is a more complicated orthogonal relation, which will be used in the next section. 

\begin{lemma}\label{lem:24}
  If $D_1 | H$, $D_2 |H$, $(X, D_1^k)_k = 1$, and $(Y, D_2^k)_k=1$, where $X$ and $Y$ are two polynomials in $\fnum_q[x]$ with $\deg(X)<k\deg(D_1)$ and $\deg(Y)<k\deg(D_2)$, then we have
  \begin{equation}
    \label{eq:lemma24}
    \begin{split}
    \sum_{A+B\equiv G~(\moda H^k)}&E(X, D_1^k)(A) E(Y, D_2^k)(B)=\\
    &\left\{\begin{tabular}[l]{l}
          $|H|^kE(X, D^k)$, if $X=Y, D_1=D_2=D$,\\
          $0$, \mbox{ otherwise},
        \end{tabular}
\right.
    \end{split}
  \end{equation}
where the summation ranges over a complete residue system modulo $H^k$. 
\end{lemma}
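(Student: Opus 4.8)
The plan is to evaluate the double sum by first performing the sum over all pairs $(A,B)$ in a complete residue system modulo $H^k$ with $A+B\equiv G~(\moda H^k)$. Since $B$ is determined by $A$ (namely $B\equiv G-A$), this is really a single sum over $A$ running through a complete residue system modulo $H^k$, and the summand becomes $E(X,D_1^k)(A)\,E(Y,D_2^k)(G-A)$. Using the additivity of $E(Y,D_2^k)$ (equation \eqref{eq:lemma21p} applied modulo $D_2^k$, noting $D_2^k\mid H^k$) we factor this as $E(Y,D_2^k)(G)\cdot E(X,D_1^k)(A)\,\overline{E(Y,D_2^k)(A)}$, so everything reduces to understanding $\sum_{A\moda H^k} E(X,D_1^k)(A)\,E(-Y,D_2^k)(A)$, a sum of an additive character of $\fnum_q[x]/\langle H^k\rangle$ over the whole group.

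Next I would identify that combined character. Because $D_1^k\mid H^k$ and $D_2^k\mid H^k$, both $E(X,D_1^k)$ and $E(-Y,D_2^k)$ are additive characters modulo $H^k$; by Lemma \ref{lem:21} their product equals $E(Z,H^k)$ for a unique $Z$ with $\deg Z<k\deg H$, and this product is the principal character iff $Z=0$ iff $E(X,D_1^k)=E(Y,D_2^k)$ as characters modulo $H^k$. By Lemma \ref{lem:twoCases} (orthogonality) the sum over $A$ is $|H|^k$ when the two characters coincide and $0$ otherwise; in the former case the whole double sum collapses to $|H|^k E(Y,D_2^k)(G)$. So the remaining task is purely bookkeeping: show that, under the stated normalizations $(X,D_1^k)_k=1$, $(Y,D_2^k)_k=1$, $\deg X<k\deg D_1$, $\deg Y<k\deg D_2$, the equality $E(X,D_1^k)=E(Y,D_2^k)$ as characters modulo $H^k$ forces $D_1=D_2$ and $X=Y$, and that in that case $E(Y,D_2^k)(G)=E(X,D^k)(G)$, matching the claimed answer.

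For that last step I would argue as follows. Using Lemma \ref{lem:carlitz} with $A=H^k/D_1^k$ (monic), $E(X,D_1^k)$ lifts to $E\big(X\cdot (H/D_1)^k,\,H^k\big)$ modulo $H^k$, and similarly $E(Y,D_2^k)=E\big(Y\cdot(H/D_2)^k,\,H^k\big)$; hence the two characters modulo $H^k$ agree iff $X(H/D_1)^k\equiv Y(H/D_2)^k~(\moda H^k)$. Reducing this congruence and using that $(X,D_1^k)_k=1$ and $(Y,D_2^k)_k=1$ pin down the exact power of each irreducible $P\mid H$ dividing each side, one deduces $D_1=D_2=:D$, and then $X\equiv Y~(\moda D^k)$, which with the degree bounds gives $X=Y$. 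Finally, when $D_1=D_2=D$ and $X=Y$, the surviving factor $E(Y,D_2^k)(G)$ is exactly $E(X,D^k)(G)$ as in the statement. I expect the main obstacle to be precisely this matching argument — carefully tracking $P$-adic valuations in the congruence $X(H/D_1)^k\equiv Y(H/D_2)^k~(\moda H^k)$ to conclude $D_1=D_2$ — rather than the orthogonality reduction, which is routine given Lemma \ref{lem:21} and Lemma \ref{lem:twoCases}.
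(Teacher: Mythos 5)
Your proposal is correct and follows essentially the same route as the paper: reduce to a single character sum over $A$ modulo $H^k$, lift $E(X,D_1^k)$ and $E(Y,D_2^k)$ to modulus $H^k$ via Lemma \ref{lem:carlitz}, apply the orthogonality of Lemma \ref{lem:twoCases}, and then show the coincidence condition forces $D_1=D_2$ and $X=Y$. The only difference is that the "main obstacle" you anticipate is handled more simply in the paper: since $\deg\bigl(X(H/D_1)^k\bigr)<k\deg H$ and likewise for $Y$, the congruence $X(H/D_1)^k\equiv Y(H/D_2)^k~(\moda H^k)$ is already an equality of polynomials, giving $XD_2^k=YD_1^k$, from which the coprimality hypotheses yield $D_1=D_2$ and $X=Y$ with no valuation bookkeeping modulo $H^k$.
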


\begin{proof}
  We first note that $E(G, H) = E(G, a^{-1}H)$, where $a$ is the leading coefficient of $H$. Without loss of generality, we may suppose that $H$ is a monic polynomial, and write $D_1R_1 = H$, $D_2R_2 = H$, and $B=G-A~(\moda H^k)$, then the left side of \eqref{eq:lemma24} is (see \eqref{eq:EGHprop}) that 
\begin{equation}
  \label{eq:lemma24p}
  \begin{split}
&\sum_{A\moda H^k}E(X, D_1^k)(A)E(Y, D_2^k)(G-A)\\
=&E(Y, D_2^k)(G) \sum_{A\moda H^k} E(XR_1^k, H^k)(A)E(YR_2^k, H^k)(-A)\\
=&E(Y, D_2^k)(G) \sum_{A\moda H^k} E(A, H^k)(XR_1^k-YR_2^k).
  \end{split}
\end{equation}
By Lemma~\ref{lem:twoCases}, the inner sum in the above equality is zero, if $XR_1^k-YR_2^k \not \equiv 0 ~(\moda H^k)$, and $|H|^k$, if $XR_1^k\equiv YR_2^k ~(\moda H^k)$. Since $\deg(X)< k \deg(D_1)$ and $\deg(Y)<k \deg(D_2)$, we have $XR_1^k = YR_2^k$, if $XR_1^k\equiv YR_2^k ~(\moda H^k)$. It follows that $XD_2^k = YD_1^k$, and $D_1 = D_2$, $X=Y$, since $(X, D_1^k)_k = (Y, D_2^k)_k = 1$. We complete the proof of this lemma. 
\end{proof}

Next, we give a simple proof of \eqref{eq:ramaPro3} and \eqref{eq:Holder}. In order to prove the Kluyer and H\"{o}lder's formula in the polynomial case, we first make some slight modification of the M\"{o}bius inversion formula on $\fnum_q[x]$. 

\begin{definition}
  A non-zero mapping $\delta$ from $\fnum_q[x]$ to the complex plane is said to be an arithmetical function on $\fnum_q[x]$, if $\delta(aA) = \delta(A)$ for any $a\in \fnum_q^*$ and $A\in \fnum_q[x]$. It is said to be a multiplicative function, if $\delta(AB) = \delta(A)\cdot \delta(B)$, whenever $(A, B)=1$, and a complex multiplicative function, if $\delta(AB) = \delta(A)\cdot\delta(B)$ for any $A, B$ in $\fnum_q[x]$. 
\end{definition}

We start with the following a few important examples of the arithmetical functions on $\fnum_q[x]$. 

{\bf M\"{o}bius function $\mu(H)$}: Let $\mu(0) = 0$, and 
\begin{equation}
  \label{eq:mobius}
  \mu(H) = \left\{
    \begin{tabular}[l]{l}
      1, if  $H\in \fnum_q^*$\\
      0, if there exists a $P$ \mbox{such that} $P^2|H$\\
      $(-1)^t$, if $H=P_1P_2 \cdots P_t$, where $P_j$ are different.
    \end{tabular}\right.
\end{equation}
It is easy to see that $\mu(H)$ is a multiplicative function on $\fnum_q[x]$. Moreover, we have the following identities that
\begin{equation}
  \label{eq:mobiusIden1}
  \sum_{D|H}\mu(D)=\left\{
    \begin{tabular}[l]{l}
      $1$, if $\deg(H)=0$\\
      $0$, if $\deg(H)\geq 1$
    \end{tabular}\right.
\end{equation}
and
\begin{equation}
  \label{eq:mobiusIden2}
  \sum_{D^k|H}\mu(D)=\left\{
    \begin{tabular}[l]{l}
      1, \mbox{ if }$D$ \mbox{is} $k$\mbox{-th power divisors-free}\\
      0, \mbox{ otherwise}
    \end{tabular}\right.
\end{equation}

{\bf Euler totient function $\phi(H)$}: If $H\neq 0$, we define $\phi(H)$ to be the number of polynomials of degree less than $\deg(H)$ that are comprime to $H$, and $\phi(0)=0$. 

{\bf Jordan totient function $\phi_k(H)$}: If $H\neq 0$, $\phi_k(H)$ is the number of polynomials of degree less than $k\cdot \deg(H)$ that have no common $k$-th power divisors other than one with $H^k$. 

Clearly, $\phi_1(H) = \phi(H)$, and $\phi_k(H)$ is a multiplicative function on $\fnum_q[x]$. The following equalities are easy to verify:
\begin{equation}
  \label{eq:JordanIden}
  \sum_{D|H}\phi(D) = |H|, \mbox{ and } \sum_{D|H}\phi_k(D) = |H|^k. 
\end{equation}

{\bf Mangoldt function $\Lambda(H)$}: We define the Mangoldt function $\Lambda(H)$ on $\fnum_q[x]$ by
\begin{equation}
  \label{eq:mangoldt}
  \Lambda(H)=\left\{
    \begin{tabular}[l]{l}
      $\log |P|$, if $H=P^t, t\geq 1$\\
      $0$, otherwise.
    \end{tabular}
\right.
\end{equation}

It is easy to see that 
\begin{equation}
  \label{eq:mangoldtIden}
  \sum_{D|H}\Lambda(D) = \log |D|. 
\end{equation}

\begin{lemma}[M\"{o}bius Inversion Formula] 
\label{lem:mobiusInv}
If $\Delta(H)$ and $\delta(H)$ are two arithmetical functions on $\fnum_q[x]$, then
  \begin{equation}
    \label{eq:lemma25}
    \Delta(H) = \sum_{D|H}\delta(D), \mbox{ for all }H\neq 0, 
  \end{equation}
if and only if 
\begin{equation}
  \label{eq:lemma25-2}
  \delta(H) = \sum_{D|H}\mu(D)\Delta(\frac{H}{D}), \mbox{ for all }H \neq 0. 
\end{equation}
\end{lemma}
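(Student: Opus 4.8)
The plan is to prove the Möbius inversion formula on $\fnum_q[x]$ by the standard double-counting argument, exactly as in the classical integer setting, taking advantage of the fact that the divisor structure of $\fnum_q[x]$ (restricted to monic divisors) behaves like that of $\mathbb{Z}_{>0}$. First I would record the key orthogonality identity \eqref{eq:mobiusIden1}, namely $\sum_{D\mid H}\mu(D)$ equals $1$ if $\deg(H)=0$ and $0$ otherwise; this is the only nontrivial input and it follows from multiplicativity of $\mu$ together with the observation that for a prime power $P^t$ the sum telescopes to $\mu(1)+\mu(P)=0$. I would also note at the outset that the sums $\sum_{D\mid H}$ range over monic divisors, so that each $H\neq 0$ has a well-defined finite divisor set and the ``arithmetical function'' hypothesis ($\delta(aA)=\delta(A)$ for $a\in\fnum_q^*$) makes the notation $\delta(H/D)$ unambiguous.

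For the forward direction, assume $\Delta(H)=\sum_{D\mid H}\delta(D)$ for all $H\neq 0$. Then I would compute
\[
\sum_{D\mid H}\mu(D)\,\Delta\!\left(\frac{H}{D}\right)
=\sum_{D\mid H}\mu(D)\sum_{E\mid H/D}\delta(E)
=\sum_{E\mid H}\delta(E)\sum_{D\mid H/E}\mu(D),
\]
where the interchange of summation is justified because the pairs $(D,E)$ with $D\mid H$ and $E\mid (H/D)$ are exactly the pairs with $E\mid H$ and $D\mid (H/E)$ — this is the monic-divisor analogue of the standard reindexing $de\mid H$. By \eqref{eq:mobiusIden1} the inner sum $\sum_{D\mid H/E}\mu(D)$ vanishes unless $\deg(H/E)=0$, i.e.\ unless $E$ and $H$ differ by a unit; since both are taken monic this forces $E=H$, leaving only the term $\delta(H)$. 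Hence $\delta(H)=\sum_{D\mid H}\mu(D)\Delta(H/D)$.

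The converse is symmetric: assuming $\delta(H)=\sum_{D\mid H}\mu(D)\Delta(H/D)$, substitute into $\sum_{D\mid H}\delta(D)$, reindex in the same way, and collapse the resulting Möbius sum to isolate $\Delta(H)$. I do not expect any genuine obstacle here; the only point requiring a little care is making the reindexing of the double sum over monic divisors rigorous and confirming that ``$\deg(H/E)=0$ with $E,H$ monic'' indeed yields $E=H$ (rather than merely $E=aH$). Everything else is a routine transcription of the integer proof, so the write-up can be kept short, citing \eqref{eq:mobiusIden1} as the essential lemma.
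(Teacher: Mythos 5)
Your proposal is correct and follows exactly the route the paper intends: the paper's own proof consists of the single remark that the argument is the classical one, resting on the orthogonality identity \eqref{eq:mobiusIden1}, which is precisely the double-sum reindexing you carry out in detail. Your added care about monic divisors forcing $E=H$ (rather than $E=aH$) is a worthwhile clarification but does not change the argument.
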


\begin{proof}
  The proof is similar to the classical case, from \eqref{eq:mobiusIden1}, we have the lemma immediately. 
\end{proof}

As a direct consequence of Lemma~\ref{lem:mobiusInv}, from \eqref{eq:JordanIden} and \eqref{eq:mangoldtIden}, we have
\begin{equation}
  \label{eq:lemma25p1}
\phi(H)  = \sum_{D|H}\mu(D)|\frac{H}{D}|  = |H|\prod_{P|H}(1-\frac{1}{|P|}),
\end{equation}
\begin{equation}
  \label{eq:lemma25p2}
\phi_k(H)  = \sum_{D|H}\mu(D)|\frac{H}{D}|^k  = |H|^k\prod_{P|H}(1-\frac{1}{|P|^k}),
\end{equation}
and
\begin{equation}
  \label{eq:lemma25p3}
  \Lambda(H) = \sum_{D|H}\log |D| \cdot \mu(\frac{H}{D}). 
\end{equation}

We give a simple proof of \eqref{eq:ramaPro3} and \eqref{eq:Holder} now. The method here is an analogue of Andreson and Appostol \cite{ref:3}. 

\begin{lemma}[Kluyer's formula]
  \label{lem:kluyer}
Let $\eta_k(G, H)$ be the polynomial Ramanujan sums given by \eqref{eq:ramaGen}, and $H \neq 0$, then we have
\begin{equation}
  \label{eq:kluyer}
  \eta_k(G, H) = \sum_{D|H, D^k|G}|D|^k \mu(\frac{H}{D}). 
\end{equation}
\end{lemma}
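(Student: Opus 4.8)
The plan is to prove Kluyer's formula \eqref{eq:kluyer} by the standard arithmetic-function device: express the defining sum \eqref{eq:ramaGen} as a sum over all residues $D$ modulo $H^k$ with an inclusion–exclusion detecting the coprimality condition $(D,H^k)_k=1$, and then collapse the resulting double sum using Lemma~\ref{lem:carlitz} and Lemma~\ref{lem:twoCases}. Concretely, I would introduce the function $\rho_k(D) = \sum_{E^k \mid (D, H^k)_k} \mu(E)$, which by \eqref{eq:mobiusIden2} equals $1$ when $(D,H^k)_k = 1$ and $0$ otherwise. Substituting this into \eqref{eq:ramaGen} gives
\begin{equation*}
  \eta_k(G, H) = \sum_{D \moda H^k} E(G, H^k)(D) \sum_{\substack{E \mid H \\ E^k \mid D}} \mu(E).
\end{equation*}

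Next I would interchange the order of summation, letting $E$ run over monic divisors of $H$ and writing $D = E^k D'$ with $D'$ ranging over a complete residue system modulo $(H/E)^k$; this change of variable is legitimate because as $D'$ runs over residues mod $(H/E)^k$, the product $E^k D'$ runs over exactly those residues mod $H^k$ divisible by $E^k$. Using \eqref{eq:EGHprop} together with Lemma~\ref{lem:carlitz} in the form $E(G, H^k)(E^k D') = E(E^k D', H^k)(G) = E(D', (H/E)^k)(G) = E(G,(H/E)^k)(D')$, the inner sum becomes $\sum_{D' \moda (H/E)^k} E(G, (H/E)^k)(D')$. By Lemma~\ref{lem:twoCases} this inner sum equals $|H/E|^k$ when $(H/E)^k \mid G$ and $0$ otherwise. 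Hence
\begin{equation*}
  \eta_k(G, H) = \sum_{\substack{E \mid H \\ (H/E)^k \mid G}} \mu(E)\, |H/E|^k,
\end{equation*}
and reindexing with $D = H/E$ (so $E = H/D$, $D$ monic, $D \mid H$, $D^k \mid G$) yields exactly \eqref{eq:kluyer}.

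The main obstacle I anticipate is making the change of variable $D \mapsto E^k D'$ fully rigorous at the level of residue systems, since $E^k D'$ as $D'$ ranges mod $(H/E)^k$ must be checked to hit each residue class mod $H^k$ that is divisible by $E^k$ exactly once — and one must be careful that the character $E(G,H^k)$ restricted to multiples of $E^k$ really does factor through the smaller modulus $(H/E)^k$, which is precisely what Lemma~\ref{lem:carlitz} provides once one writes $H^k = E^k (H/E)^k$ and notes $E^k$ is (up to a unit) monic. Once that bookkeeping is in place, everything else is a direct application of the orthogonality in Lemma~\ref{lem:twoCases}. As a remark, \eqref{eq:Holder} then follows from \eqref{eq:kluyer} by the multiplicativity \eqref{eq:ramaPro2} and a local computation at each irreducible $P \mid H$, invoking \eqref{eq:lemma25p2}; I would mention this but not carry out the routine prime-by-prime evaluation here.
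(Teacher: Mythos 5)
Your proposal is correct and follows essentially the same route as the paper's own proof: the Möbius detection of the condition $(D,H^k)_k=1$ via \eqref{eq:mobiusIden2}, the interchange of summation, the reduction of modulus through Lemma~\ref{lem:carlitz}, the evaluation of the inner sum by the orthogonality relation of Lemma~\ref{lem:twoCases}, and the final reindexing $D\mapsto H/D$. The extra care you take with the change of variables $D = E^k D'$ is exactly the bookkeeping the paper performs implicitly when it writes $R = A\cdot D^k$ and invokes $E(R,H^k) = E(A,(H/D)^k)$.
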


\begin{proof}
  By \eqref{eq:mobiusIden2}, we have
  \begin{equation}
    \label{eq:lemmaKluP1}
    \begin{split}
    \eta_k(G, H) = & \sum_{R \moda H^k}  E(G, H^k)(R) \sum_{D^k | (R, H^k)_k}\mu(D)\\
=& \sum_{D|H}\mu(D) \sum_{\substack{R \moda H^k\\ D^k|R}}E(G, H^k)(R)\\
=& \sum_{D|H}\mu(D) \sum_{\substack{R \moda H^k\\ D^k|R}}E(R, H^k)(G).
    \end{split}  
  \end{equation}
We write $R=A\cdot D^k$, and note that $E(R, H^k) = E(A, (\frac{H}{D})^k)$ by Lemma \ref{lem:carlitz}. It follows that
  \begin{equation}
    \label{eq:lemmaKluP1}
    \begin{split}
    \eta_k(G, H) = & \sum_{D|H}  \mu(D) \sum_{A \moda (\frac{H}{D})^k}E\left(A, (\frac{H}{D})^k\right)(G)\\
=& \sum_{D|H}\mu(\frac{H}{D}) \sum_{A \moda D^k}E(A, D^k)(G)\\
=& \sum_{\substack{D|H\\D^k|G}}\mu(\frac{H}{D})|D|^k.
    \end{split}  
  \end{equation}
We complete the proof of Lemma \ref{lem:kluyer}. 
\end{proof}

\begin{theorem}[H\"{o}lder formula]
  \label{thm:holder}
For any $G$ and $H$ in $\fnum_q[x]$ and $H\neq 0$, we have
\begin{equation}
  \label{eq:thmHolder}
  \eta_k(G, H) = \phi_k(H)\mu(N)\phi_k^{-1}(N), \mbox{ where }N^k = \frac{H^k}{(G, H^k)_k}. 
\end{equation}
\end{theorem}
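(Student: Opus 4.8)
The plan is to deduce the H\"older formula directly from the Kluyer formula \eqref{eq:kluyer} by exploiting multiplicativity. First I would reduce to the case where $H$ is a prime power. By \eqref{eq:ramaPro2}, the map $H \mapsto \eta_k(G,H)$ is multiplicative in $H$ (for fixed $G$, restricting to $H$ coprime to the relevant pieces), and the right-hand side of \eqref{eq:thmHolder} is also multiplicative in $H$: indeed $\phi_k$ is multiplicative by the remark after its definition, $\mu$ is multiplicative by \eqref{eq:mobius}, and $N^k = H^k/(G,H^k)_k$ factors multiplicatively over the primes dividing $H$ since $(G, H^k)_k$ does. So it suffices to verify the identity when $H = P^a$ for an irreducible $P$ and $a \geq 1$.

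For $H = P^a$, I would compute both sides explicitly. Write $b$ for the largest integer with $b \le a$ such that $P^{kb} \mid G$ (equivalently $P^{kb} \mid (G, P^{ka})_k$), so that $(G, P^{ka})_k = P^{k\min(b,a)}$ and $N^k = P^{k(a-b)}$ when $b \le a$, i.e. $N = P^{a-b}$ (and if $P^{ka}\mid G$ then $N=1$). From Kluyer \eqref{eq:kluyer}, the only monic divisors $D$ of $P^a$ with $D^k \mid G$ are $D = P^j$ for $0 \le j \le \min(b,a)$, and among these only $D = P^a$ and $D = P^{a-1}$ give a nonzero $\mu(P^a/D)$. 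Thus $\eta_k(G, P^a)$ is $|P^a|^k = |P|^{ka}$ if $b \ge a$ (the term $D=P^a$, with $\mu(1)=1$); it is $|P^a|^k - |P^{a-1}|^k = |P|^{ka}(1 - |P|^{-k})$ if $b \ge a-1$ but $b < a$, i.e. $b = a-1$; and it is $0$ if $b \le a-2$, since then neither $D=P^a$ nor $D=P^{a-1}$ satisfies $D^k\mid G$. On the other side, $\phi_k(P^a) = |P|^{ka}(1 - |P|^{-k})$ by \eqref{eq:lemma25p2}, and $\mu(N)\phi_k^{-1}(N)$ equals $\phi_k(1)^{-1} = 1$ when $N=1$ (case $b\ge a$, noting $|P|^{ka}(1-|P|^{-k})$ then needs reconciling — see below), equals $\mu(P)\phi_k(P)^{-1} = -|P|^{-ka'}(1-|P|^{-k})^{-1}$-type expressions for $N = P$, and vanishes when $P^2 \mid N$, i.e. $a - b \ge 2$.

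The one point that needs care, and which I expect to be the main (though modest) obstacle, is the boundary case $P^{ka}\mid G$, where $E(G, P^{ka})$ is the principal character and $\eta_k(G,P^a) = \phi_k(P^a)$ by \eqref{eq:ramaPro3} directly; here $N = 1$, so the claimed formula reads $\eta_k(G,P^a) = \phi_k(P^a)\mu(1)\phi_k(1)^{-1} = \phi_k(P^a)$, which matches — so my bookkeeping above must set this case aside rather than forcing $|P|^{ka}$. Once the three genuine cases ($b = a-1$, giving $N=P$ and both sides $\phi_k(P^a)\cdot(-(|P|^k-1)^{-1})$ after using $\phi_k(P) = |P|^k - 1$; $b \le a-2$, giving $P^2\mid N$ and both sides $0$; and $P^{ka}\mid G$, giving $N=1$ and both sides $\phi_k(P^a)$) are checked to agree, multiplicativity assembles the general statement. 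I would close by remarking that $H=0$ is excluded and that for $k=1$ this specializes to the first formula in \eqref{eq:Holder}, since $\phi_1 = \phi$ and $N = H/(G,H)$.
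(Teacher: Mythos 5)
Your overall strategy --- reduce to $H=P^a$ by multiplicativity of both sides (using \eqref{eq:ramaPro2} on the left and the multiplicativity of $\phi_k$, $\mu$, and $H\mapsto N$ on the right) and then evaluate the Kluyer sum \eqref{eq:kluyer} at a prime power --- is sound and genuinely different from the paper's proof, which never localizes: the paper substitutes $(G,H^k)_k=A^k$, $N=H/A$ into \eqref{eq:kluyer} and manipulates the resulting sum $\sum_{D|A}\mu(ND)\,|A/D|^k$ globally via an Euler product. Your route is arguably more transparent, but your case analysis at $H=P^a$ is off by one and, as written, internally inconsistent.

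Concretely, with $b$ the largest integer $\le a$ such that $P^{kb}\mid G$: the Kluyer sum runs over $D=P^j$ with $j\le b$, and only $j=a$ and $j=a-1$ carry nonzero M\"obius factors. When $b=a$, \emph{both} terms survive, so $\eta_k(G,P^a)=|P|^{ka}-|P|^{k(a-1)}=\phi_k(P^a)$; you assign this value to the wrong case, then notice that your $b\ge a$ bookkeeping ``needs reconciling'' and repair it correctly (the character is principal, so the sum counts the $k$-reduced residue system). But when $b=a-1$, the term $D=P^a$ does \emph{not} satisfy $D^k\mid G$, so only $D=P^{a-1}$ survives and $\eta_k(G,P^a)=\mu(P)\,|P|^{k(a-1)}=-|P|^{k(a-1)}$ --- not the value $|P|^{ka}-|P|^{k(a-1)}$ you state in your middle paragraph. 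Your final paragraph then asserts that for $b=a-1$ both sides equal $\phi_k(P^a)\cdot\bigl(-(|P|^k-1)^{-1}\bigr)$, which is indeed $-|P|^{k(a-1)}$ and is the correct common value, but it contradicts the positive quantity you computed for the left side a few lines earlier, and the discrepancy is never resolved. Redo the bookkeeping: $b=a$ gives $\phi_k(P^a)$ and $N=1$; $b=a-1$ gives $-|P|^{k(a-1)}=\phi_k(P^a)\mu(P)\phi_k^{-1}(P)$ and $N=P$; $b\le a-2$ gives $0$ on both sides since $\mu(P^{a-b})=0$. With that correction the argument closes.
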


\begin{proof}
  Let $(G, H^k)_k = A^k$, then $D|H$ and $D^k|G$ if and only if $D|A$. Let $N=\frac{H}{A}$, then by Lemma \ref{lem:kluyer}, we have
  \begin{equation}
    \label{eq:thmHolderP}
    \begin{split}
    \eta_k(G, H) = & \sum_{D|A}|D|^k  \mu(\frac{NA}{D}) =  \sum_{D|A}\mu(ND)|\frac{A}{D}|^k\\
=& \mu(N)\sum_{\substack{D|A\\(D, N)=1}}\mu(D)|\frac{A}{D}|^k \\
=& \mu(N)|A|^k \prod_{\substack{P|A\\ P\nmid N}}(1-|P|^{-k})\\
=& \frac{\mu(N)|A|^k \prod_{P|A\cdot N}(1-|P|^{-k})}{\prod_{P|N}(1-|P|^{-k})}\\
=& \frac{\mu(N)|H|^k \prod_{P|H}(1-|P|^{-k})}{|N|^k\prod_{P|N}(1-|P|^{-k})}\\
=& \phi_k(H)\mu(N)\phi_k^{-1}(N),
    \end{split}  
\end{equation}
where $N^k = \frac{H^k}{A^k} = \frac{H^k}{(G, H^k)_k}$. We complete the proof of Theorem \ref{thm:holder}.
\end{proof}

By \eqref{eq:thmHolder}, \eqref{eq:Holder} and \eqref{eq:ramaPro3} follow immediately, in particular, we have
\begin{equation}
  \label{eq:holderFollow1}
  \eta_k(G, H) = \phi_k(H), \mbox{ if }H^k | G, 
\end{equation}
and 
\begin{equation}
  \label{eq:holderFollow2}
  \eta_k(G, H) = \mu(H), \mbox{ if }(G, H^k)_k = 1.
\end{equation}

\section{Reciprocity Formula}
\label{sec:reci}

In this section, we give two reciprocity formulas for the polynomial Ramanujan sums. We start with the following lemmas. 

\begin{lemma}
  \label{lem:reci1}
Let $\eta(G, H)$ be the polynomial Ramanujan sum given by \eqref{eq:rama}, and $H$ be a square-free polynomial, then $\mu(H)\eta(G, H)$ is a multiplicative function in variable $G$. 
\end{lemma}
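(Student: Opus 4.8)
The statement is essentially a reformulation of the explicit evaluation of $\eta(G,H)$ for square-free $H$, so the plan is to first extract a clean closed form for $\mu(H)\eta(G,H)$ as a function of $G$, and then recognise multiplicativity from that form.

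\medskip

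First I would specialise the H\"older formula (Theorem~\ref{thm:holder}) to $k=1$: there $N=\tfrac{H}{(G,H)}$, so $\eta(G,H)=\phi(H)\mu(N)\phi^{-1}(N)$. Since $H$ is square-free, $(G,H)$ and $N=H/(G,H)$ are coprime square-free polynomials whose product is $H$, so multiplicativity of $\phi$ gives $\phi(H)=\phi((G,H))\phi(N)$, hence $\phi(H)\phi^{-1}(N)=\phi((G,H))$, while multiplicativity of $\mu$ gives $\mu(H)=\mu((G,H))\mu(N)$ with $\mu(N)^2=1$. Combining,
\[
\mu(H)\eta(G,H)=\mu((G,H))\,\phi((G,H))=\prod_{P\mid (G,H)}\bigl(1-|P|\bigr).
\]
(The same identity also drops out of Kluyer's formula, Lemma~\ref{lem:kluyer}, by writing $H=(G,H)N$ in $\sum_{D\mid (G,H)}|D|\mu(H/D)$, so either route works.)

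\medskip

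Next I would use two elementary facts. First, the map $M\mapsto\mu(M)\phi(M)$ is a multiplicative function on $\fnum_q[x]$, being the product of the two multiplicative functions $\mu$ and $\phi$ introduced in Section~\ref{sec:pre}. Second, for a \emph{fixed} square-free $H$ the assignment $G\mapsto(G,H)$ sends coprime arguments to coprime values: because $H$ is square-free one has $(G,H)=\prod_{P\mid H,\,P\mid G}P$, and if $(G_1,G_2)=1$ the sets $\{P\mid H:P\mid G_1\}$ and $\{P\mid H:P\mid G_2\}$ are disjoint, whence $(G_1G_2,H)=(G_1,H)(G_2,H)$ with the two factors coprime. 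Putting these together, for $(G_1,G_2)=1$,
\[
\mu(H)\eta(G_1G_2,H)=\mu\bigl((G_1G_2,H)\bigr)\phi\bigl((G_1G_2,H)\bigr)=\mu\bigl((G_1,H)\bigr)\phi\bigl((G_1,H)\bigr)\cdot\mu\bigl((G_2,H)\bigr)\phi\bigl((G_2,H)\bigr),
\]
and since $\mu(H)^2=1$ for square-free $H$ the right-hand side equals $\bigl(\mu(H)\eta(G_1,H)\bigr)\bigl(\mu(H)\eta(G_2,H)\bigr)$, which is the desired multiplicativity in $G$. I would also record the harmless bookkeeping needed for the Definition: $G\mapsto\mu(H)\eta(G,H)$ is not identically zero (it equals $1$ whenever $(G,H)=1$) and is invariant under $G\mapsto aG$ for $a\in\fnum_q^{*}$, since such a substitution merely permutes the reduced residue system in \eqref{eq:rama}.

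\medskip

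I do not expect a real obstacle here; the one point deserving care is the behaviour of the g.c.d.\ under products, namely $(G_1G_2,H)=(G_1,H)(G_2,H)$ with coprime factors. This genuinely uses the square-freeness of $H$ and fails for general $H$, so it is the step where the hypothesis must be invoked explicitly.
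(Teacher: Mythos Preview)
Your proof is correct and follows essentially the same route as the paper: both apply the H\"older formula to reduce $\mu(H)\eta(G,H)$ to an explicit product over the primes dividing $(G,H)$, and then verify multiplicativity in $G$ directly (the paper via index sets $\gamma,\gamma_1,\gamma_2$ of primes of $H$, you via the closed form $\mu((G,H))\phi((G,H))$). One small correction to your closing remark: the identity $(G_1G_2,H)=(G_1,H)(G_2,H)$ with coprime factors actually holds for \emph{any} $H$ once $(G_1,G_2)=1$ (check prime-by-prime); the square-freeness of $H$ is genuinely needed earlier, in the step where you declare $(G,H)$ and $N=H/(G,H)$ coprime and use $\mu(N)^2=1$.
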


\begin{proof}
  Let $f(G) = \mu(H) \eta(G, H)$, $H=P_1P_2\cdots P_n$, and $G = G_1\cdot G_2$, where $(G_1, G_2) =1$. We set
  \begin{equation}
    \label{eq:lemReci1P1}
    (G, H) = \prod_{i\in \gamma} P_i, \quad (G_1, H) = \prod_{i\in \gamma_1}P_i,\quad (G_2, H) = \prod_{i\in \gamma_2}P_i, 
  \end{equation}
where $\gamma_1 \cup \gamma_2 = \gamma \subset \{1, 2, \ldots, n\}$, and $\gamma_1 \cap \gamma_2$ is an empty set. Then by the H\"{o}lder formula \eqref{eq:thmHolder} ($k=1$) we have
\begin{equation}
  \label{eq:lemReci1P2}
  f(G) = (-1)^{-|\gamma|}\mu^2(H)\prod_{i\in \gamma}\phi(P_i), 
\end{equation}
\begin{equation}
  \label{eq:lemReci1P3}
  f(G_1) = (-1)^{-|\gamma_1|}\mu^2(H)\prod_{i\in \gamma_1}\phi(P_i), 
\end{equation}
and 
\begin{equation}
  \label{eq:lemReci1P4}
  f(G_2) = (-1)^{-|\gamma_2|}\mu^2(H)\prod_{i\in \gamma_2}\phi(P_i).
\end{equation}
Since $|\gamma_1| + |\gamma_2| = |\gamma|$, and $\mu^2(H) = \mu^4(H)$, then $f(G) = f(G_1)\cdot f(G_2)$, and the lemma follows. 
\end{proof}

\begin{lemma}
  \label{lem:reci2}
If $H$ is square-free, then we have
\begin{equation}
  \label{eq:lemReci2}
  \mu(H)\eta(G, H) = \sum_{D|(G, H)}|D|\mu(D). 
\end{equation}
\end{lemma}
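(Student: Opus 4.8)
The goal is to prove that for square-free $H$,
\[
\mu(H)\eta(G, H) = \sum_{D\mid (G,H)} |D|\mu(D).
\]

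My plan is to proceed multiplicatively, exploiting Lemma~\ref{lem:reci1}, which tells us that $G \mapsto \mu(H)\eta(G,H)$ is multiplicative in $G$ when $H$ is square-free. First I would observe that the right-hand side, as a function of $G$, is also multiplicative in $G$: writing $H = P_1\cdots P_n$ square-free and $G = G_1 G_2$ with $(G_1,G_2)=1$, the monic divisors of $(G,H)$ split uniquely as products of a divisor of $(G_1,H)$ and a divisor of $(G_2,H)$ (since the relevant prime factors are partitioned), and both $D\mapsto |D|$ and $D\mapsto\mu(D)$ are completely multiplicative / multiplicative, so the sum factors. Hence it suffices to verify the identity when $G = P^a$ is a prime power, or even better, to reduce everything to the case $H = P$ irreducible by using \eqref{eq:ramaPro2} on the left and the multiplicativity just noted on the right.

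So the crux is the single-prime case: $H = P$ irreducible. Here the left-hand side is $\mu(P)\eta(G,P) = -\eta(G,P)$. There are two subcases. If $P \mid G$, then $(G,P) = P$, and by \eqref{eq:holderFollow1} (with $k=1$) we get $\eta(G,P) = \phi(P) = |P| - 1$, so the left-hand side is $-(|P|-1) = 1 - |P|$; meanwhile the right-hand side is $\sum_{D\mid P}|D|\mu(D) = 1\cdot 1 + |P|\cdot(-1) = 1 - |P|$. If $P \nmid G$, then $(G,P) = 1$, so by \eqref{eq:holderFollow2} we get $\eta(G,P) = \mu(P) = -1$, hence the left-hand side is $-(-1) = 1$, and the right-hand side is $\sum_{D\mid 1}|D|\mu(D) = 1$. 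The two sides agree in both cases.

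Finally I would assemble the general case: for square-free $H = P_1\cdots P_n$ and arbitrary $G$, by \eqref{eq:ramaPro2} we have $\eta(G,H) = \prod_i \eta(G,P_i)$, and $\mu(H) = \prod_i \mu(P_i) = \prod_i(-1)$, so $\mu(H)\eta(G,H) = \prod_i \bigl(\mu(P_i)\eta(G,P_i)\bigr) = \prod_i \sum_{D\mid(G,P_i)}|D|\mu(D)$ by the prime case, and expanding this product of sums and using the unique factorization of divisors of $(G,H)$ recovers $\sum_{D\mid(G,H)}|D|\mu(D)$. I do not expect any serious obstacle here; the only thing to be careful about is the bookkeeping in the multiplicative splitting of $(G,H)$ — namely that every monic divisor $D$ of $(G,H)$ is square-free (since $H$ is) and factors uniquely according to which $P_i$ divide it, which is exactly what makes the product of the local sums collapse to the global sum. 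One could alternatively skip the reduction entirely and just cite Lemma~\ref{lem:reci1} together with the observation that both sides are multiplicative in $G$ and agree on prime powers of the variable with $H$ fixed square-free, but handling it prime-by-prime in $H$ via \eqref{eq:ramaPro2} feels cleaner and self-contained.
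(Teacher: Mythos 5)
Your proof is correct, but it takes a genuinely different reduction from the paper's. The paper keeps the square-free $H$ intact and invokes Lemma~\ref{lem:reci1} (multiplicativity of $G \mapsto \mu(H)\eta(G,H)$), together with the evident multiplicativity of the right-hand side in $G$, to reduce to the case $G = P^s$ a prime power; it then checks that both sides equal $1$ when $P$ is none of the $P_j$ dividing $H$, and equal $1-|P_j|$ when $P=P_j$, using the H\"{o}lder formula. You instead factor over the irreducible divisors of $H$: the standing identity \eqref{eq:ramaPro2} reduces the left-hand side to the case $H=P$ irreducible, where \eqref{eq:holderFollow1} and \eqref{eq:holderFollow2} give the two values $1-|P|$ (if $P\mid G$) and $1$ (if $P\nmid G$), and the product of these local sums expands back to $\sum_{D\mid (G,H)}|D|\mu(D)$ because every monic divisor of the square-free polynomial $(G,H)$ factors uniquely over the $P_i$. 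All of your computations check out, and your route has the modest advantage of being self-contained: it does not need Lemma~\ref{lem:reci1} at all, only \eqref{eq:ramaPro2} and the two special values of $\eta(G,P)$, whereas the paper's argument leans on Lemma~\ref{lem:reci1}, whose own proof already invokes the H\"{o}lder formula. The paper's version is shorter on the page precisely because that lemma has absorbed the multiplicative bookkeeping; the local verification at an irreducible $P$ is the same $1-|P|$ computation in both approaches.
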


\begin{proof}
  Since the both sides of \eqref{eq:lemReci2} are multiplicative in $G$, it suffices to prove that when $G=P^s$. Let $H = P_1P_2\cdots P_n$. If $P \neq P_j$, $1\leq j \leq n$, then the both sides of \eqref{eq:lemReci2} are one, so we may let $P=P_j$. It follows that
  \begin{equation}
    \label{eq:lemReci2P1}
    \mu(H)\eta(G, H) = -\mu^2(H)\phi(P_j) = 1-|P_j|, 
  \end{equation}
and 
\begin{equation}
  \label{eq:lemReci2P2}
  \sum_{D|(G, H)} |D| \mu(D) = 1-|P_j|. 
\end{equation}
The lemma follows at once.
\end{proof}

We note that the sum in \eqref{eq:lemReci2} is symmetric in $G$ and $H$, hence as a direct consequence of Lemma \ref{lem:reci2}, we have

\begin{corollary}\label{cor:reci1}
  Suppose that both $G$ and $H$ are square-free, then
  \begin{equation}
    \label{eq:cor31}
      \mu(H)\eta(G, H) = \mu(G)\eta(H, G).
  \end{equation}
\end{corollary}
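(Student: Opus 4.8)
The plan is to read the statement off directly from Lemma~\ref{lem:reci2}. The key observation is that the right-hand side of \eqref{eq:lemReci2}, namely $\sum_{D\mid(G,H)}|D|\mu(D)$, depends only on the monic greatest common divisor $(G,H)$, and since $(G,H)=(H,G)$ this expression is manifestly symmetric under interchanging the roles of $G$ and $H$. So the corollary is simply the symmetrization of that identity.

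Concretely, first I would invoke that $H$ is square-free to apply Lemma~\ref{lem:reci2}, obtaining $\mu(H)\eta(G,H)=\sum_{D\mid(G,H)}|D|\mu(D)$. Next, since $G$ is also square-free by hypothesis, I would apply the same lemma with the roles of $G$ and $H$ exchanged, giving $\mu(G)\eta(H,G)=\sum_{D\mid(H,G)}|D|\mu(D)$. Finally, because $(G,H)=(H,G)$ as monic polynomials, the two sums coincide, and chaining the two equalities yields $\mu(H)\eta(G,H)=\mu(G)\eta(H,G)$, which is \eqref{eq:cor31}.

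There is essentially no obstacle here; the substantive work has already been carried out in Lemma~\ref{lem:reci2}. The only point needing a moment's care is the bookkeeping of the hypotheses: Lemma~\ref{lem:reci2} requires the second argument of $\eta$ to be square-free, so evaluating $\eta(G,H)$ via the lemma needs $H$ square-free while evaluating $\eta(H,G)$ needs $G$ square-free — which is precisely why the corollary assumes both $G$ and $H$ are square-free.
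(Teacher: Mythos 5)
Your proof is correct and is exactly the paper's argument: the corollary is stated there as a direct consequence of Lemma~\ref{lem:reci2}, obtained by applying it once with $H$ square-free and once with the roles of $G$ and $H$ exchanged, and observing that the sum $\sum_{D\mid(G,H)}|D|\mu(D)$ is symmetric in $G$ and $H$. Your remark about why both square-freeness hypotheses are needed is accurate bookkeeping.
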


\begin{theorem}[The first reciprocity formula]
\label{thm:reci1}
  Let $\bar{H}$ be the largest square-free divisor of $H$, and $H^* = \frac{H}{\bar{H}}$, then
  \begin{equation}
    \label{eq:thm31}
    \frac{\mu(\bar{H})\eta(GH^*, H)}{|H^*|}  = \frac{\mu(\bar{G})\eta(HG^*, G)}{|G^*|}. 
  \end{equation}
\end{theorem}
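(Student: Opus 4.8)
\emph{Proof proposal.} The plan is to reduce the general identity \eqref{eq:thm31} to the square-free case already treated in Lemma~\ref{lem:reci2} and Corollary~\ref{cor:reci1}, by first establishing the auxiliary identity
\begin{equation*}
\eta(GH^*, H) = |H^*|\,\eta(G, \bar{H}),
\end{equation*}
valid for all $G$ and all $H \neq 0$. Granting this, the left-hand side of \eqref{eq:thm31} becomes $\mu(\bar{H})\,\eta(G, \bar{H})$; since $\bar{H}$ is square-free, Lemma~\ref{lem:reci2} rewrites it as a sum over the divisors of $(G, \bar{H})$, and the resulting expression will turn out to be symmetric in $G$ and $H$.

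First I would prove the auxiliary identity from Kluyer's formula (Lemma~\ref{lem:kluyer} with $k = 1$),
\begin{equation*}
\eta(GH^*, H) = \sum_{\substack{D \mid H\\ D \mid GH^*}} |D|\,\mu\!\left(\frac{H}{D}\right).
\end{equation*}
A term survives only when $H/D$ is square-free, i.e.\ only when the multiplicity of each irreducible $P$ in $D$ is at least its multiplicity in $H$ minus one; equivalently, only when $H^* \mid D$. Writing $D = H^* D'$, the maps $D \mapsto D'$ and $D' \mapsto H^* D'$ are mutually inverse bijections between the surviving divisors of $H$ and all divisors $D'$ of $\bar{H}$; moreover $D \mid GH^*$ is equivalent to $D' \mid G$, and $H/D = \bar{H}/D'$. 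Hence
\begin{equation*}
\eta(GH^*, H) = |H^*| \sum_{\substack{D' \mid \bar{H}\\ D' \mid G}} |D'|\,\mu\!\left(\frac{\bar{H}}{D'}\right) = |H^*|\,\eta(G, \bar{H}),
\end{equation*}
the last step being Kluyer's formula for the modulus $\bar{H}$.

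It then follows from Lemma~\ref{lem:reci2}, applied to the square-free polynomial $\bar{H}$, that
\begin{equation*}
\frac{\mu(\bar{H})\,\eta(GH^*, H)}{|H^*|} = \mu(\bar{H})\,\eta(G, \bar{H}) = \sum_{D \mid (G, \bar{H})} |D|\,\mu(D).
\end{equation*}
Finally I would observe that $(G, \bar{H})$ is simply the product of the irreducible polynomials dividing both $G$ and $H$, so that $(G, \bar{H}) = (\bar{G}, \bar{H}) = (\bar{G}, H)$; thus the right-hand side above is invariant under interchanging $G$ and $H$, and carrying out the same computation with $G$ and $H$ swapped identifies it with $\mu(\bar{G})\,\eta(HG^*, G)/|G^*|$. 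This is precisely \eqref{eq:thm31}.

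The only delicate point is the divisor bookkeeping in the second paragraph: checking that the vanishing of $\mu(H/D)$ forces $H^* \mid D$, that $D = H^* D'$ indeed sets up the claimed bijection, and that $(G, \bar{H})$ is genuinely symmetric in $G$ and $H$. An alternative to the auxiliary identity would be a direct appeal to the H\"older formula \eqref{eq:thmHolder}: one verifies that $N = H/(GH^*, H)$ equals the product of the irreducibles dividing $H$ but not $G$ (in particular $N$ is square-free) and then simplifies $\phi(H)\mu(N)\phi^{-1}(N)$ by hand; but the Kluyer route sketched above keeps the computation shortest.
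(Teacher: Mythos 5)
Your proposal is correct and follows essentially the same route as the paper: both reduce the statement to the auxiliary identity $\eta(GH^*,H)=|H^*|\,\eta(G,\bar H)$ and then conclude via the symmetric divisor sum of Lemma~\ref{lem:reci2} (equivalently Corollary~\ref{cor:reci1}) for the square-free moduli $\bar H$ and $\bar G$. The only difference is cosmetic: you obtain the auxiliary identity from Kluyer's formula by the bijection $D=H^*D'$, whereas the paper gets it in one line from the H\"older formula \eqref{eq:thmHolder} together with $\phi(H)=|H^*|\phi(\bar H)$ and $(GH^*,H)=H^*(G,\bar H)$.
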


\begin{proof}
  It is easy to verify the following Hardy's observation that
  \begin{equation}
    \label{eq:thm31P1}
    \eta(G, H)=0, \mbox{ if } H^*\nmid G, \mbox{ and }\phi(H)= |H^*|\phi(\bar{H}). 
  \end{equation}
In order to prove \eqref{eq:thm31}, we first show that
\begin{equation}
  \label{eq:thm31P2}
  \eta(GH^*, H) = |H^*| \eta(G, \bar{H}). 
\end{equation}
By \eqref{eq:thmHolder}($k=1$), one has
\begin{equation*}
  \begin{split}
    \eta(GH^*, H) &= \phi(H)\mu\left(\frac{H}{(GH^*, H)}\right)\phi^{-1}\left(\frac{H}{(GH^*, H)}\right)\\
&= |H^*|\phi(\bar{H})\mu\left(\frac{\bar{H}}{(G, \bar{H})}\right)\phi^{-1}\left(\frac{\bar{H}}{(G, \bar{H})}\right)\\
&= |H^*|\eta(G,\bar{H}),
  \end{split}
\end{equation*}
and \eqref{eq:thm31P2} follows. We note that $\eta(G, \bar{H}) = \eta(\bar{G}, \bar{H})$, it follows by Corollary \ref{cor:reci1} that
\begin{equation*}
  \begin{split}
    \frac{\mu(\bar{H})\eta(GH^*, H)}{|H^*|} &= \mu(\bar{H})\eta(G, \bar{H}) = \mu(\bar{H})\eta(\bar{G}, \bar{H})\\
&=\mu(\bar{G})\eta(\bar{H}, \bar{G}) = \mu(\bar{G})\eta(H, \bar{G}) \\
&=\frac{\mu(\bar{G})\eta(HG^*, G)}{|G^*|}. 
  \end{split}
\end{equation*}
We complete the proof of Theorem~\ref{thm:reci1}.
\end{proof}

The next reciprocity property first appeared in \cite[Theorem 3.8]{ref:20} in the rational case, here we present an analogue in the polynomial case. 

\begin{theorem}[The second reciprocity formula]
  \label{thm:reci2}
If $D_1|H$, and $D_2|H$, then we have
\begin{equation}
  \label{eq:thmReci2}
  \phi(D_2)\eta(\frac{H}{D_2}, D_1) = \phi(D_1)\eta(\frac{H}{D_1}, D_2).
\end{equation}
\end{theorem}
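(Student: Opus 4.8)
The plan is to deduce the second reciprocity formula \eqref{eq:thmReci2} directly from the H\"{o}lder formula \eqref{eq:thmHolder} with $k=1$, which expresses each Ramanujan sum in terms of the Euler totient function. First I would write $D_1 R_1 = H$ and $D_2 R_2 = H$, so that $R_j$ is the complementary monic divisor, and observe that the arguments appearing in \eqref{eq:thmReci2} are $\eta(R_2, D_1)$ and $\eta(R_1, D_2)$. Applying \eqref{eq:thmHolder} gives
\begin{equation*}
  \eta(R_2, D_1) = \phi(D_1)\,\mu(N_1)\,\phi^{-1}(N_1), \quad N_1 = \frac{D_1}{(R_2, D_1)},
\end{equation*}
and similarly $\eta(R_1, D_2) = \phi(D_2)\,\mu(N_2)\,\phi^{-1}(N_2)$ with $N_2 = D_2/(R_1, D_2)$. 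Multiplying the first by $\phi(D_2)$ and the second by $\phi(D_1)$, the claim \eqref{eq:thmReci2} reduces to the purely divisor-theoretic identity $\mu(N_1)\phi^{-1}(N_1) = \mu(N_2)\phi^{-1}(N_2)$, i.e. $N_1 = N_2$ once we account for the vanishing cases.

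Second, I would prove $N_1 = N_2$ by a local (prime-by-prime) analysis, using that $(R_2, D_1)$, $N_1$, etc.\ are all monic and that $\mu,\phi$ are multiplicative. Fix an irreducible $P$ and let $a = v_P(D_1)$, $b = v_P(D_2)$, $c = v_P(H)$; then $a \le c$, $b \le c$, $v_P(R_2) = c - b$, and $v_P(R_1) = c - a$. A short computation gives $v_P((R_2,D_1)) = \min(a, c-b)$ and $v_P((R_1,D_2)) = \min(b, c-a)$, hence $v_P(N_1) = a - \min(a, c-b) = \max(0, a+b-c)$ and $v_P(N_2) = b - \min(b, c-a) = \max(0, a+b-c)$. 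These agree for every $P$, so $N_1 = N_2 = N$ as monic polynomials, and therefore $\phi(D_2)\eta(R_2,D_1) = \phi(D_1)\phi(D_2)\mu(N)\phi^{-1}(N) = \phi(D_1)\eta(R_1,D_2)$, which is exactly \eqref{eq:thmReci2}.

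Finally, I would dispose of the degenerate cases cleanly. If $N \notin \fnum_q^*$ has a repeated irreducible factor — which by the formula $v_P(N) = \max(0, a+b-c)$ happens precisely when some $P$ has $a + b - c \ge 2$ — then $\mu(N) = 0$ and both sides of \eqref{eq:thmReci2} vanish, so the identity holds trivially; and when $\deg N = 0$ both Ramanujan sums equal the corresponding totient values by \eqref{eq:holderFollow2}, giving $\phi(D_2)\phi(D_1) = \phi(D_1)\phi(D_2)$. The only mild subtlety, and the step I expect to require the most care, is bookkeeping the $P$-adic valuations of the various gcd's (particularly checking $\min(a, c-b) = \min(b, c-a) + (\text{difference of } a,b)$ cancels correctly so that the exponents $\max(0, a+b-c)$ genuinely coincide); everything else is a direct substitution into \eqref{eq:thmHolder} and the multiplicativity of $\mu$ and $\phi$. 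The $k>1$ version \eqref{eq:reciK} then follows by the same argument with $k$-th powers and $\phi_k$ in place of $\phi$, replacing $(\cdot,\cdot)$ by $(\cdot,\cdot)_k$ throughout.
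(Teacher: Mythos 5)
Your proof is correct and takes essentially the same route as the paper: both apply the H\"{o}lder formula \eqref{eq:thmHolder} with $k=1$ to each side and reduce the claim to showing $N_1=N_2$. The only difference is in how that equality is verified --- the paper uses the one-line gcd identity $\frac{D_1}{(\frac{H}{D_2},D_1)}=\frac{D_1D_2}{(H,\,D_1D_2)}=\frac{D_2}{(\frac{H}{D_1},D_2)}$, whereas you check it prime-by-prime via $P$-adic valuations, which is the same computation carried out locally.
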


\begin{proof}
  By \eqref{eq:thmHolder}, then
  \begin{equation}
    \label{eq:thmReci2P1}
    \eta(\frac{H}{D_2}, D_1) = \phi(D_1)\mu\left(\frac{D_1}{(\frac{H}{D_2}, D_1)}\right)\phi^{-1}\left(\frac{D_1}{(\frac{H}{D_2},D_1)}\right),
  \end{equation}
and
\begin{equation}
  \label{eq:thmReci2P2}
     \eta(\frac{H}{D_1}, D_2) = \phi(D_2)\mu\left(\frac{D_2}{(\frac{H}{D_1}, D_2)}\right)\phi^{-1}\left(\frac{D_2}{(\frac{H}{D_1},D_2)}\right).
\end{equation}
We write $D=D_1D_2$, then
\begin{equation}
  \label{eq:thmReci2P3}
  \frac{D_1}{(\frac{H}{D_2},D_1)} =   \frac{D}{(H,D)} =   \frac{D_2}{(\frac{H}{D_1},D_2)},
\end{equation}
and the theorem follows at once. 
\end{proof}

The importance of \eqref{eq:thmReci2} lies in the fact that it is equivalent to the H\"{o}lder formula \eqref{eq:thmHolder} ($k=1$). If we take $D_1 = H$, and $D_2 = \frac{H}{(G, H)}$ in \eqref{eq:thmReci2}, and note that
\begin{equation}
  \label{eq:thmReci2Imp}
  \eta(G, H) = \eta((G, H), H), 
\end{equation}
and $\eta(1, H) = \mu(H)$, then
\begin{equation}
  \label{eq:thmReci2Imp2}
  \eta(G, H) = \phi(H)\mu\left(\frac{H}{(G, H)}\right)\phi^{-1}\left(\frac{H}{(G, H)}\right),
\end{equation}
which is the H\"{o}lder formula of $\eta(G, H)$. 

The generalized polynomial Ramanujan sum $\eta_k(G, H)$ seemingly cannot share the first reciprocity formula when $k>1$, since $G$ and $H$ are not symmetric in a generalized version of \eqref{eq:lemReci2}. But, indeed, $\eta_k(G, H)$ shares the second reciprocity formula. 

\begin{theorem}
  \label{thm:reciK}
If $D_1^k|H$ and $D_2^k|H$, then
\begin{equation}
  \label{eq:thmReciK}
  \phi_k(D_2)\eta_k(\frac{H}{D_2^k}, D_1) = \phi_k(D_1)\eta_k(\frac{H}{D_1^k}, D_2).
\end{equation}
\end{theorem}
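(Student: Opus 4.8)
The plan is to prove Theorem~\ref{thm:reciK} by the same mechanism that made Theorem~\ref{thm:reci2} work, namely by reducing both sides to a common symmetric quantity via the H\"older formula~\eqref{eq:thmHolder}. First I would apply~\eqref{eq:thmHolder} to each side: since $D_1^k \mid H$ we have $(\tfrac{H}{D_2^k}, D_1^k)_k = \tfrac{D_1^k (H/D_1^k \cdot \text{stuff})}{\cdots}$, so more carefully one writes $N_1^k = \tfrac{D_1^k}{(H/D_2^k,\, D_1^k)_k}$ and obtains
\begin{equation}
  \label{eq:reciKplan1}
  \eta_k\!\left(\tfrac{H}{D_2^k}, D_1\right) = \phi_k(D_1)\,\mu(N_1)\,\phi_k^{-1}(N_1),
  \qquad
  \eta_k\!\left(\tfrac{H}{D_1^k}, D_2\right) = \phi_k(D_2)\,\mu(N_2)\,\phi_k^{-1}(N_2),
\end{equation}
with $N_2^k = \tfrac{D_2^k}{(H/D_1^k,\, D_2^k)_k}$. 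After multiplying the first identity by $\phi_k(D_2)$ and the second by $\phi_k(D_1)$, the claim~\eqref{eq:thmReciK} reduces to showing $\mu(N_1)\phi_k^{-1}(N_1) = \mu(N_2)\phi_k^{-1}(N_2)$, and for this it suffices to prove $N_1 = N_2$ as monic polynomials.

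The key computation is therefore the identity $N_1 = N_2$. Writing $D = D_1 D_2$ (so $D^k = D_1^k D_2^k$), I expect
\begin{equation}
  \label{eq:reciKplan2}
  N_1 = \frac{D_1}{\left(\tfrac{H}{D_2^k},\, D_1^k\right)_k^{1/k}} = \frac{D}{(H, D^k)_k^{1/k}} = \frac{D_2}{\left(\tfrac{H}{D_1^k},\, D_2^k\right)_k^{1/k}} = N_2,
\end{equation}
which is the exact $k$-th power analogue of~\eqref{eq:thmReci2P3}. To verify the middle equalities I would argue prime-by-prime: fix an irreducible $P$, let $a = v_P(D_1)$, $b = v_P(D_2)$, $h = v_P(H)$, noting $ka \le h$ and $kb \le h$ since $D_1^k \mid H$, $D_2^k \mid H$. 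Then $v_P\big((\tfrac{H}{D_2^k}, D_1^k)_k\big) = k\min\big(a,\ \lfloor (h - kb)/k\rfloor\big) = k\min(a,\ \lfloor h/k\rfloor - b)$, so $v_P(N_1) = a - \min(a, \lfloor h/k\rfloor - b) = \max(0,\ a + b - \lfloor h/k\rfloor)$, which is manifestly symmetric in $a \leftrightarrow b$; the same expression arises for $v_P(N_2)$, and one checks it also equals $v_P(D) - v_P\big((H, D^k)_k^{1/k}\big) = (a+b) - \min(a+b,\ \lfloor h/k\rfloor)$. Hence $N_1 = N_2$.

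Alternatively, and perhaps more cleanly, I could bypass H\"older entirely and mimic a direct orthogonality argument using Lemma~\ref{lem:kluyer}: expand $\phi_k(D_2)\eta_k(\tfrac{H}{D_2^k}, D_1)$ via~\eqref{eq:kluyer} together with $\sum_{E \mid D_2}\phi_k(E) = |D_2|^k$ from~\eqref{eq:JordanIden}, interchange the sums, and recognize the result as a symmetric double sum over divisors of $D_1$ and $D_2$ constrained by divisibility into $H$. This is the polynomial analogue of the rational proof in~\cite[Theorem 3.8]{ref:20}. The main obstacle, such as it is, is purely bookkeeping: getting the floor functions $\lfloor h/k \rfloor$ and the $k$-th-power gcd valuations right, since for $k>1$ the quantity $(H, D^k)_k$ is genuinely $\min(\lfloor h/k\rfloor, \deg\text{-type})$ rather than an ordinary gcd, and one must be careful that $D_1^k \mid H$ (not merely $D_1 \mid H$) is exactly what guarantees $a \le \lfloor h/k \rfloor$ so that all the $\max(0, \cdot)$ terms behave. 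Once $N_1 = N_2$ is established the theorem is immediate, and as a remark I would note that, just as in the $k=1$ case, this second reciprocity formula is equivalent to the H\"older formula~\eqref{eq:thmHolder}: taking $D_1 = H$ and $D_2^k = \tfrac{H^k}{(G,H^k)_k}$ and using $\eta_k(G,H) = \eta_k((G,H^k)_k, H)$ together with $\eta_k(1, H) = \mu(H)$ recovers~\eqref{eq:thmHolder}.
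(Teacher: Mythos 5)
Your proposal is correct and follows essentially the same route as the paper: apply the H\"older formula \eqref{eq:thmHolder} to both sides and reduce the claim to the single identity $N_1^k = \tfrac{D^k}{(H,D^k)_k} = N_2^k$ with $D = D_1D_2$. In fact your prime-by-prime valuation check of that identity (via $v_P(N_i) = \max(0,\, a+b-\lfloor h/k\rfloor)$) supplies a detail the paper simply asserts, so no changes are needed.
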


\begin{proof}
  By \eqref{eq:thmHolder}, we have

  \begin{equation}
    \label{eq:thmReciKP1}
    \eta_k(\frac{H}{D_2^k}, D_1) = \phi_k(D_1)\mu(N_1)\phi_k^{-1}(N_1),
  \end{equation}
and
\begin{equation}
  \label{eq:thmReciKP2}
     \eta_k(\frac{H}{D_1^k}, D_2) = \phi(D_2)\mu(N_2)\phi_k^{-1}(N_2),
\end{equation}
where $N_1^k  = \frac{D_1^k}{(\frac{H}{D_2^k}, D_1^k)_k}$, and $N_2^k  = \frac{D_2^k}{(\frac{H}{D_1^k}, D_2^k)_k}$.

We write $D=D_1D_2$, then
\begin{equation}
  \label{eq:thmReciKP3}
N_1^k = \frac{D^k}{(H,D^k)_k} = N_2^k. 
\end{equation}
It follows that $N_1 = N_2$, and we have Theorem \ref{thm:reciK}.
\end{proof}

As the case of $k=1$, \eqref{eq:thmReciK} is equivalent to \eqref{eq:thmHolder}. If we replace $H$ by $H^k$ in \eqref{eq:thmReciK}, then if $D_1|H$ and $D_2|H$, we have
\begin{equation}
  \label{eq:thmReciKImp}
  \phi_k(D_2)\eta_k(\frac{H^k}{D_2^k}, D_1) = \phi_k(D_1)\eta_k(\frac{H^k}{D_1^k}, D_2).
\end{equation}

As an analogue of the above equality in the rational case, one may see Lemma 1 of \cite{ref:12}. Taking $D_1 = H$, and $D_2=\frac{H}{A}$, where $A^k = (G, H^k)_k$ in \eqref{eq:thmReciKImp}, we note that
\begin{equation}
  \label{eq:thmReciKImp2}
  \eta_k(G, H) = \eta_k(A^k, H), \mbox{ and }\eta_k(1, H) = \mu(H).
\end{equation}
It follows that
\begin{equation*}
  \eta_k(G, H) = \phi_k(H)\mu(N)\phi_k^{-1}(N), 
\end{equation*}
where $N=\frac{H}{A}$, and $N^k=\frac{H^k}{A^k} = \frac{H^k}{(G, H^k)_k}$, we have \eqref{eq:thmHolder} at once. 

To make applications of the first reciprocity formula, one may follow Johnson \cite{ref:24} to consider the $C$-series representations and the $C'$-series representation for the arithmetical function on $\fnum_q[x]$, and show that the two classes of representation are equivalent under certain conditions. Here we consider a special Dirichlet series and derive its values by using Theorem~\ref{thm:reci1}. We set
\begin{equation}
  \label{eq:appThm31}
  \sigma(s, H) = \sum_{\substack{G\in \mathbb{A}\\G~square-free}}\frac{\eta(G, H)}{|G|^s},
\end{equation}
where $\mathbb{A}$ is the set of monic polynomials of $\fnum_q[x]$, and $s$ is a complex variable. If $H$ is square-free, and $\re(s)>1$, we have the following formula
\begin{equation}
  \label{eq:appThm31-2}
  \mu(H)\sigma(s, H) = \zeta_{\mathbb{A}}(s)\zeta^{-1}_{\mathbb{A}}(2s)\prod_{P|H}\frac{1-|P|+|P|^s}{1+|P|^s},
\end{equation}
where $\zeta_{\mathbb{A}}(s)$ is the zeta function on $\fnum_q[x]$ given by \eqref{eq:zeta}. If $\re(s)>1$, then $\zeta_{\mathbb{A}}(s)$ has the following Euler product formula
\begin{equation}
  \label{eq:eulerProd}
  \zeta_{\mathbb{A}}(s) = \prod_{P\in \mathbb{A}}(1-|P|^{-s})^{-1}. 
\end{equation}
To prove \eqref{eq:appThm31-2}, by \eqref{eq:cor31} we have
\begin{equation}
  \label{eq:eulerProdP}
  \mu(H)\sigma(s, H) = \sum_{\substack{G\in \mathbb{A},\\G~square-free}}\frac{\mu(G)\eta(H, G)}{|G|^s},
\end{equation}
where $f(G) = \mu(G)\eta(H, G)$ is a multiplicative function in $G$, so we may make use of Euler product and obtain that
\begin{equation*}
  \label{eq:eulerProdP2}
    \mu(H)\sigma(s, H) = \prod_{P\in \mathbb{A}}\left(1-\frac{\eta(H, P)}{|P|^s}\right).
\end{equation*}
It is easy to see that $\eta(H, P)=\mu(P)$, if $P \nmid H$, and $\eta(H, P) = \phi(P)$, if $P|H$. Then we have
\begin{equation*}
  \begin{split}
    \mu(H)\sigma(s, H) &= \prod_{\substack{P\in \mathbb{A},\\P\nmid H}}(1+\frac{1}{|P|^s})\prod_{P|H}\left(1-\frac{\phi(P)}{|P|^s}\right)\\
&= \zeta_{\mathbb{A}}(s)\zeta_{\mathbb{A}}^{-1}(2s)\prod_{P|H}\frac{1-|P|+|P|^s}{1+|P|^s},
  \end{split}
\end{equation*}
which is \eqref{eq:appThm31-2}.

An application of the second reciprocity formula appears in the next section (See Lemma~\ref{lem:44} below). 

\section{Orthogonality relation}
\label{sec:orth}

In this section we derive some more complicated orthogonal formula and a number of corollaries for the polynomial Ramanujan sums. In the rational case, one may find the analogues in Cohen \cite{ref:11,ref:12}. We begin by proving

\begin{lemma}\label{lem:41}
  If $D_1|H$ and $D_2|H$, then for any $G$ in $\fnum_q[x]$, we have
  \begin{equation}
    \label{eq:lemma41}
    \sum_{A+B\equiv G~(\moda H^k)}\eta_k(A, D_1)\eta_k(B, D_2)=\left\{
        \begin{tabular}[l]{l}
          $|H|^k\eta_k(G, D)$, if $D_1=D_2=D$,\\
          $0$, otherwise,
        \end{tabular}
\right.
  \end{equation}
where the summation extends over a complete residue system modulo $H^k$. 
\end{lemma}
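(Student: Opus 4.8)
The plan is to expand each Ramanujan sum on the left‑hand side using its definition \eqref{eq:ramaGen} and then interchange the order of summation so that the exponential sum over the "free" variable (the one that ranges over the complete residue system modulo $H^k$) can be evaluated by the orthogonality lemma already at our disposal. Concretely, write $D_1R_1=H$, $D_2R_2=H$, so that the conditions $D_1|H$, $D_2|H$ make sense, and recall from Lemma~\ref{lem:21} that $\eta_k(A,D_1)=\sum_{X} E(X,D_1^k)(A)$, where $X$ runs over a $k$‑reduced residue system modulo $D_1$ — i.e.\ over those $X$ with $\deg(X)<k\deg(D_1)$ and $(X,D_1^k)_k=1$ — and similarly $\eta_k(B,D_2)=\sum_{Y}E(Y,D_2^k)(B)$. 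Substituting these into the left side of \eqref{eq:lemma41} and moving the sums over $X$ and $Y$ outside, the inner sum becomes exactly
\[
\sum_{A+B\equiv G\,(\moda H^k)}E(X,D_1^k)(A)\,E(Y,D_2^k)(B),
\]
which is precisely the quantity evaluated in Lemma~\ref{lem:24}.

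Next I would invoke Lemma~\ref{lem:24}: this inner sum vanishes unless $D_1=D_2=:D$ and $X=Y$, in which case it equals $|H|^k E(X,D^k)(G)$. Therefore, if $D_1\neq D_2$ the whole left side is $0$, which gives the second case of \eqref{eq:lemma41} immediately. If $D_1=D_2=D$, only the diagonal terms $X=Y$ survive, and the double sum over $X,Y$ collapses to a single sum over $X$ in a $k$‑reduced residue system modulo $D$:
\[
\sum_{A+B\equiv G\,(\moda H^k)}\eta_k(A,D)\eta_k(B,D)
=|H|^k\sum_{\substack{X\moda D^k\\ (X,D^k)_k=1}}E(X,D^k)(G)
=|H|^k\,\eta_k(G,D),
\]
the last equality being the definition \eqref{eq:ramaGen} of $\eta_k(G,D)$ together with \eqref{eq:EGHprop}. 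This establishes the first case and completes the proof.

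The only point requiring a little care — and the step I expect to be the main obstacle — is the bookkeeping needed to apply Lemma~\ref{lem:24} verbatim: that lemma is stated for $X,Y$ with $(X,D_1^k)_k=(Y,D_2^k)_k=1$ and $\deg(X)<k\deg(D_1)$, $\deg(Y)<k\deg(D_2)$, which is exactly the index set furnished by writing $\eta_k$ as a sum of additive characters via Lemma~\ref{lem:21}, so one must be explicit that the representatives $X,Y$ are chosen with these degree bounds. One should also note that $D_1|H$ and $D_2|H$ guarantee $D_1^k|H^k$ and $D_2^k|H^k$, so that reduction modulo $H^k$ is compatible with reduction modulo $D_1^k$ and $D_2^k$ and the character values $E(X,D_1^k)(A)$ depend only on $A\bmod H^k$; this is what legitimizes summing over a complete residue system modulo $H^k$ in the convolution. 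Once these compatibility remarks are in place, the computation is a direct substitution and appeal to Lemma~\ref{lem:24}.
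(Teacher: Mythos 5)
Your proposal is correct and follows essentially the same route as the paper: expand each $\eta_k$ as a sum of additive characters over a $k$-reduced residue system, interchange the order of summation, and evaluate the inner convolution sum by Lemma~\ref{lem:24}, so that only the diagonal terms $D_1=D_2$, $X=Y$ survive and the remaining sum over $X$ reassembles into $|H|^k\eta_k(G,D)$ via \eqref{eq:EGHprop}. Your added remarks on the degree bounds for the representatives $X,Y$ and on the compatibility of reduction modulo $D_i^k$ with reduction modulo $H^k$ are exactly the implicit bookkeeping in the paper's one-line argument.
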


\begin{proof}
  By Lemma~\ref{lem:24}, the left-hand side of \eqref{eq:lemma41} is
  \begin{equation}
    \label{eq:lemma41p1}
    \begin{split}
    \sum_{(X, D_1^k)_k=1}\sum_{(Y, D_2^k)_k=1}&\sum_{A+B\equiv G~(\moda H^k)}E(A, D_1^k)(X)E(B, D_2^k)(Y)  \\
&=|H|^k \sum_{(X, D^k)=1}E(X, D^k)(G) = |H|^k \eta_k(G, D),
    \end{split}    
  \end{equation}
if $D_1=D_2=D$. Otherwise it is zero. We have the lemma immediately. 
\end{proof}

\begin{definition}
  A $k$-reduced residue system modulo $H$ is that $D$ ranges over modulo $H^k$ such that $(D, H^k)_k = 1$. 
\end{definition}

\begin{lemma}\label{lem:42}
  A complete residue system modulo $H^k$ is given by $A=R(\frac{H}{D})^k$, where $D$ ranges over the divisors of $H$, and for each $D$, $R$ ranges over a $k$-reduced residue system modulo $D$. 
\end{lemma}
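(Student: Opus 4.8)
The plan is to exhibit the assignment $\Phi\colon(D,R)\mapsto A=R(H/D)^k$ as a bijection from the set of pairs $(D,R)$ — with $D$ a monic divisor of $H$ and, for each such $D$, $R$ running over a fixed set of representatives of a $k$-reduced residue system modulo $D$ — onto a complete residue system modulo $H^k$, by checking well-definedness and injectivity and then matching cardinalities. Well-definedness is immediate: if $R\equiv R'\ (\moda D^k)$ then $R-R'=D^kS$, hence $R(H/D)^k-R'(H/D)^k=H^kS\equiv 0\ (\moda H^k)$, so the class of $R(H/D)^k$ modulo $H^k$ depends only on the class of $R$ modulo $D^k$.

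The core step is injectivity, and the key auxiliary fact is the identity
\[
\bigl(R(H/D)^k,\;H^k\bigr)_k=(H/D)^k \qquad\text{whenever }D\mid H\text{ and }(R,D^k)_k=1 .
\]
For the inclusion "$\supseteq$" one notes that $(H/D)^k$ is a $k$-th power dividing both $R(H/D)^k$ and $H^k$, hence divides the largest such. For "$\subseteq$", write $\bigl(R(H/D)^k,H^k\bigr)_k=F^k$ with $F$ monic; the previous inclusion gives $F=(H/D)C$ for some monic $C$. Then $F^k\mid R(H/D)^k$ forces $C^k\mid R$ (cancel the non-zero polynomial $(H/D)^k$ in the domain $\fnum_q[x]$), while $F\mid H$ forces $C\mid D$, hence $C^k\mid D^k$. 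Therefore $C^k\mid (R,D^k)_k=1$, so $C$ is a unit; being monic, $C=1$ and $F=H/D$.

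Granting this identity, injectivity follows quickly. Suppose $R_1(H/D_1)^k\equiv R_2(H/D_2)^k\ (\moda H^k)$. Since $(\,\cdot\,,H^k)_k$ depends only on the residue class modulo $H^k$ (common divisors of $H^k$ and of two $H^k$-congruent polynomials coincide), the identity gives $(H/D_1)^k=(H/D_2)^k$, whence $D_1=D_2=:D$; cancelling $(H/D)^k$ then yields $R_1\equiv R_2\ (\moda D^k)$. Thus distinct admissible pairs produce distinct residue classes modulo $H^k$.

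Finally I would count: the number of pairs $(D,R)$ is $\sum_{D\mid H}\phi_k(D)=|H|^k$ by \eqref{eq:JordanIden}, which is exactly the number of residue classes modulo $H^k$. An injective map between finite sets of equal cardinality is a bijection, so $\{\,R(H/D)^k\,\}$ runs over a complete residue system modulo $H^k$. The only genuine obstacle is the $k$-th power gcd identity displayed above; everything else is formal. (Alternatively one can prove surjectivity directly — given $A$, set $E^k=(A,H^k)_k$, take $D=H/E$ and $R=A/E^k$, and the same computation shows $(R,D^k)_k=1$ — but the counting argument is shorter.)
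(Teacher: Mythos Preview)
Your argument is correct. The paper does not actually give a proof of this lemma; it simply cites Cohen~\cite{ref:13}, Lemma~4. Your write-up therefore supplies what the paper omits: the identity $\bigl(R(H/D)^k,H^k\bigr)_k=(H/D)^k$ is indeed the crux, since it lets you read off $D$ from the residue class modulo $H^k$, after which cancellation in $\fnum_q[x]$ recovers $R$ modulo $D^k$; the count via \eqref{eq:JordanIden} then closes the argument cleanly. The alternative surjectivity route you mention at the end is essentially Cohen's original proof specialized to polynomials, so either version would be acceptable here.
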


\begin{proof}
  See Cohen \cite{ref:13}, Lemma 4. 
\end{proof}

The next lemma is a more generalized form of the second reciprocity formula of $\eta_k(G, H)$. 

\begin{lemma}\label{lem:43}
  If $D_1^k|H$, and $D_2^k|H$, then for any $R$ in $\fnum_q[x]$, we have
  \begin{equation}
    \label{eq:lemma43}
    \phi_k(D_2)\eta_k(\frac{RH}{D_2^k}, D_1) = \phi_k(D_1)\eta_k(\frac{RH}{D_1^k}, D_2).
  \end{equation}
\end{lemma}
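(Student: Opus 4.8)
The plan is to deduce Lemma~\ref{lem:43} from the special case $R=1$, which is exactly Theorem~\ref{thm:reciK} (equation~\eqref{eq:thmReciK}), by reducing everything to the H\"{o}lder formula~\eqref{eq:thmHolder}. The point is that the left-hand side of~\eqref{eq:lemma43} depends on $R$ only through a greatest common divisor, so the general $R$ adds no new content. Concretely, I would first apply~\eqref{eq:thmHolder} to each of the two Ramanujan sums, writing
\begin{equation*}
  \eta_k\!\left(\tfrac{RH}{D_2^k}, D_1\right) = \phi_k(D_1)\mu(N_1)\phi_k^{-1}(N_1), \qquad
  \eta_k\!\left(\tfrac{RH}{D_1^k}, D_2\right) = \phi_k(D_2)\mu(N_2)\phi_k^{-1}(N_2),
\end{equation*}
where $N_1^k = D_1^k / (\tfrac{RH}{D_2^k}, D_1^k)_k$ and $N_2^k = D_2^k / (\tfrac{RH}{D_1^k}, D_2^k)_k$.

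The key step is then to show $N_1 = N_2$. Setting $D = D_1 D_2$, I would verify the identity
\begin{equation*}
  N_1^k = \frac{D^k}{(RH, D^k)_k} = N_2^k,
\end{equation*}
in the same way as in the proof of Theorem~\ref{thm:reciK}: multiplying the defining equation for $N_1^k$ through by $D_2^k$ gives $N_1^k D_2^k = D^k / (\tfrac{RH}{D_2^k}, D_1^k)_k \cdot D_2^k$, and since $D_2^{2k}$ divides $RH$ whenever it divides the relevant quantities — more carefully, one uses that $D_2^k \mid RH$ (because $D_2^k \mid H$) so that $(\tfrac{RH}{D_2^k}, D_1^k)_k \cdot D_2^k = (RH, D_1^k D_2^k)_k = (RH, D^k)_k$ — one obtains $N_1^k D_2^k = D^k/(RH,D^k)_k \cdot$ (a $k$-th power), and symmetrically for $N_2$. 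Once $N_1^k = N_2^k$, uniqueness of monic $k$-th roots in $\fnum_q[x]$ forces $N_1 = N_2$, and substituting back into the two displayed expressions for the Ramanujan sums immediately yields~\eqref{eq:lemma43}.

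The main obstacle, such as it is, is the bookkeeping in the $k$-th-power gcd manipulation: one must be careful that $(A/D_2^k, D_1^k)_k \cdot D_2^k$ really equals $(A, D_1^k D_2^k)_k$ when $D_2^k \mid A$ and that $D_1^k, D_2^k$ interact correctly — this uses $D_1^k \mid H$ and $D_2^k \mid H$ but does \emph{not} require $(D_1,D_2)=1$, just as in Theorem~\ref{thm:reciK}. An alternative and perhaps cleaner route is to apply Theorem~\ref{thm:reciK} verbatim with $H$ replaced by $RH$: the hypotheses $D_1^k \mid RH$ and $D_2^k \mid RH$ follow from $D_1^k \mid H$ and $D_2^k \mid H$, and~\eqref{eq:thmReciK} applied to $RH$ is precisely~\eqref{eq:lemma43}. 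I would present this short reduction as the proof, noting that it is just Theorem~\ref{thm:reciK} with $H$ replaced by $RH$.
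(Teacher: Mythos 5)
Your proposal is correct, and the short reduction you settle on at the end --- applying Theorem~\ref{thm:reciK} verbatim with $H$ replaced by $RH$, noting that $D_1^k\mid H$ and $D_2^k\mid H$ imply $D_1^k\mid RH$ and $D_2^k\mid RH$ --- is exactly the paper's proof, which simply states that the lemma follows directly from Theorem~\ref{thm:reciK}. The longer H\"{o}lder-formula computation you sketch first is also valid but redundant, since it just re-runs the proof of Theorem~\ref{thm:reciK} with $RH$ in place of $H$.
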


\begin{proof}
  It follows directly from Theorem~\ref{thm:reciK}. 
\end{proof}

If we replace $H$ by $H^k$ in the above equality, then we have the following corollary. 

\begin{corollary}
  If $D_1|H$, and $D_2|H$, then for any polynomial $R$ in $\fnum_q[x]$ we have (comparing with \cite[Lemma 1]{ref:12}) that
  \begin{equation}
    \label{eq:cor41}
    \phi_k(D_2)\eta_k\left(R(\frac{H}{D_2})^k, D_1\right) = \phi_k(D_1)\eta_k\left(R(\frac{H}{D_1})^k, D_2\right).
  \end{equation}  
\end{corollary}

\begin{lemma}\label{lem:44}
  If $D|H$, $D_1|H$, and $R\in \fnum_q[x]$ such that $(R, D^k)_k=1$, then we have
  \begin{equation}
    \label{eq:lemma44}
   \eta_k\left(R(\frac{H}{D})^k, D_1 \right) =   \eta_k\left((\frac{H}{D})^k, D_1 \right).
  \end{equation}       
\end{lemma}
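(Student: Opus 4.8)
The plan is to reduce the claim to the H\"older formula \eqref{eq:thmHolder} by showing that the two Ramanujan sums in \eqref{eq:lemma44} have exactly the same ``conductor'' $N$. Recall that by \eqref{eq:thmHolder}, $\eta_k(G, D_1)$ depends on $G$ only through $(G, D_1^k)_k$; more precisely, writing $(G, D_1^k)_k = A^k$ and $N = D_1/A$, we have $\eta_k(G, D_1) = \phi_k(D_1)\mu(N)\phi_k^{-1}(N)$. So it suffices to prove that
\begin{equation}
  \label{eq:lemma44plan}
  \left(R\Big(\tfrac{H}{D}\Big)^k, D_1^k\right)_k = \left(\Big(\tfrac{H}{D}\Big)^k, D_1^k\right)_k,
\end{equation}
and then the two sides of \eqref{eq:lemma44} are given by the same expression in $\phi_k$, $\mu$, $N$.

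To establish \eqref{eq:lemma44plan}, first I would observe that both sides are $k$-th powers of monic polynomials, so it is equivalent to the corresponding identity with the ordinary g.c.d.: letting $C = (\frac{H}{D})$, I want $\gcd\big(R C^k,\, D_1^k\big)$ and $\gcd\big(C^k,\, D_1^k\big)$ to have the same ``$k$-th power part'', i.e.\ the same value of $(\cdot, \cdot)_k$. The key point is the hypothesis $(R, D^k)_k = 1$: for every irreducible $P$ dividing $D_1$ we have $P \mid H$ but more relevantly we need that $P \nmid R$ to a high enough power to matter. Concretely, fix an irreducible $P \mid D_1$ and let $v_P$ denote the $P$-adic valuation. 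Since $(R, D^k)_k = 1$, $P \mid D$ forces $v_P(R) < k$; and since $P \mid D_1 \mid H$ and $D \mid H$, one has to compare $v_P(D)$ and $v_P(H)$. The cleanest route is to note $D \mid H$, hence $v_P(C) = v_P(H) - v_P(D) \geq 0$, and then split into the cases $P \mid D$ and $P \nmid D$; in the first case $v_P(R) < k$ is irrelevant because $v_P(C^k)$ already absorbs enough, and in the second case $(R,D^k)_k=1$ is automatic at $P$. I expect the valuation bookkeeping to show that at every $P\mid D_1$ the contribution of $R$ to $\gcd(RC^k, D_1^k)_k$ is already subsumed by $C^k$, giving \eqref{eq:lemma44plan}.

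An alternative, and perhaps slicker, approach avoids explicit valuations: apply the generalized second reciprocity formula of Lemma~\ref{lem:43} (equivalently Corollary, equation \eqref{eq:cor41}) to rewrite both sides of \eqref{eq:lemma44} in terms of $\eta_k$ sums with a different pair of moduli, where the factor $R$ sits in a position killed by the coprimality condition. Taking $D_2 = D$ and the role of $R$ in \eqref{eq:cor41}, one gets $\phi_k(D)\eta_k(R(\tfrac{H}{D})^k, D_1) = \phi_k(D_1)\eta_k(R(\tfrac{H}{D_1})^k, D)$; since $(R, D^k)_k = 1$ and $\frac{H}{D_1}$ is divisible by everything needed, the sum $\eta_k(R(\tfrac{H}{D_1})^k, D)$ simplifies via \eqref{eq:holderFollow2} or \eqref{eq:thmReciKImp2} in a way that removes $R$, and reversing the reciprocity step reinserts the same structure with $R$ deleted. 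I would present whichever of the two is shorter once the details are checked.

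The main obstacle will be the case analysis at irreducibles $P$ dividing $D_1$ but with $P \nmid D$ versus $P \mid D$: one must confirm that the hypothesis $(R, D^k)_k = 1$ (a statement about $D$, not $D_1$) really does control the $k$-th power part of $\gcd(R C^k, D_1^k)$ at all primes of $D_1$, using that $D \mid H$ and $D_1 \mid H$. If there is an irreducible $P$ with $P \mid D_1$, $P \nmid D$, and $v_P(H) - v_P(D) = 0$ (i.e.\ $P \nmid C$), then the $R$-contribution at $P$ is not obviously absorbed — so I would need to check that this configuration is impossible, or that the statement still holds because $\eta_k$ is insensitive to it; this compatibility check is the only genuinely delicate point, and I expect it to come out because $(R,D^k)_k=1$ forces $v_P(R)<k$ whenever $P\mid D$, while $P\nmid D$ cases are handled by $\frac HD$ carrying the relevant valuation.
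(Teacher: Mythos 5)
Your proposal is essentially correct, and your ``alternative, slicker'' route is precisely the paper's proof: apply Lemma~\ref{lem:43} with the pair $(D_1,D)$ to get $\eta_k(R(\frac{H}{D})^k,D_1)=\frac{\phi_k(D_1)}{\phi_k(D)}\eta_k(R(\frac{H}{D_1})^k,D)$, strip off $R$ at the modulus $D$ (where the hypothesis $(R,D^k)_k=1$ actually lives), and apply Lemma~\ref{lem:43} again to return to modulus $D_1$. One small correction there: \eqref{eq:holderFollow2} is not the right citation for the middle step, since $(R(\frac{H}{D_1})^k,D^k)_k$ need not be $1$; what you need is that $\eta_k(\,\cdot\,,D)$ depends only on the $k$-th power g.c.d.\ with $D^k$ (i.e.\ \eqref{eq:thmReciKImp2} or Theorem~\ref{thm:holder}), together with the identity $(RC^k,D^k)_k=(C^k,D^k)_k$ for $C=\frac{H}{D_1}$, which holds because $(R,D^k)_k=1$ forces $v_P(R)<k$ for every irreducible $P\mid D$, hence $\min\bigl(v_P(D),\,v_P(C)+\lfloor v_P(R)/k\rfloor\bigr)=\min\bigl(v_P(D),v_P(C)\bigr)$. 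Your primary route --- proving directly that $\bigl(R(\frac{H}{D})^k,D_1^k\bigr)_k=\bigl((\frac{H}{D})^k,D_1^k\bigr)_k$ --- is a genuinely different and slightly more elementary argument that bypasses the reciprocity formula entirely, and the one delicate configuration you flagged does resolve as you hoped: if $P\mid D_1$ and $P\nmid D$, then $v_P(\frac{H}{D})=v_P(H)\geq v_P(D_1)\geq 1$, so $(\frac{H}{D})^k$ already saturates the valuation of $D_1^k$ at $P$ and the factor $R$ contributes nothing; if $P\mid D$, then $v_P(R)<k$ and again $R$ contributes nothing to the $k$-th power g.c.d. The reciprocity route buys a computation-free reduction to the modulus where the hypothesis is stated; the direct route buys independence from Lemma~\ref{lem:43} at the cost of the valuation bookkeeping. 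Either version, once the details above are written out, is a complete proof.
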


\begin{proof}
  By \eqref{eq:lemma43}, then
  \begin{equation}
    \label{eq:lemma44p1}
    \eta_k\left(R(\frac{H}{D})^k, D_1 \right) = \frac{\phi_k(D_1)}{\phi_k(D)}\eta_k\left(R(\frac{H}{D_1})^k, D \right).
  \end{equation}
Because of $(R, D^k)_k=1$, it is easy to see by \eqref{eq:thmHolder} that
\begin{equation}
  \label{eq:lemma44p2}
     \eta_k\left(R(\frac{H}{D_1})^k, D \right) =   \eta_k\left((\frac{H}{D_1})^k, D\right). 
\end{equation}
By \eqref{eq:lemma43} once again, we have
\begin{equation}
  \label{eq:lemma44p3}
    \eta_k\left((\frac{H}{D_1})^k, D \right) = \frac{\phi_k(D)}{\phi_k(D_1)}\eta_k\left((\frac{H}{D})^k, D_1 \right),
\end{equation}
and the lemma follows at once. 
\end{proof}

Now we state and prove the main result of this section. 
\begin{theorem}
  \label{thm:41}
  If $D_1|H$, and $D_2|H$, then
  \begin{equation}
    \label{eq:thm41}
    \sum_{D|H}\eta_k\left((\frac{H}{D})^k, D_1\right)\eta_k\left((\frac{H}{D_2})^k, D\right)=\left\{
        \begin{tabular}[l]{l}
          $|H|^k$, if $D_1=D_2$,\\
          $0$, otherwise.
        \end{tabular}
\right.
  \end{equation}
\end{theorem}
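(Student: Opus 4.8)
The plan is to reduce the identity \eqref{eq:thm41} to the orthogonality relation of Lemma~\ref{lem:41} by recognizing the sum as a disguised convolution over a complete residue system modulo $H^k$. The key observation is Lemma~\ref{lem:42}: every residue $A$ modulo $H^k$ can be written uniquely as $A = R(H/D)^k$ where $D\mid H$ and $R$ runs over a $k$-reduced residue system modulo $D$. Combined with Lemma~\ref{lem:44}, which says that for such $R$ with $(R,D^k)_k=1$ one has $\eta_k(R(H/D)^k, D_1) = \eta_k((H/D)^k, D_1)$, this means the summand in \eqref{eq:thm41} does not actually depend on $R$ — it only depends on $D$. So summing $\eta_k(A,D_1)\eta_k(B,D_2)$ over $A+B\equiv G\pmod{H^k}$ (for a suitable choice of $G$) should collapse, after grouping the residues $A$ by the divisor $D$ they correspond to, into $\phi_k(D)$ copies of the product $\eta_k((H/D)^k, D_1)\,\eta_k((H/D_2)^k,D)$ up to bookkeeping.

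Concretely, first I would take $G=0$ in Lemma~\ref{lem:41} (or whatever $G$ makes the inner Ramanujan sum on the right collapse to the right shape) and expand the left-hand side $\sum_{A+B\equiv 0}\eta_k(A,D_1)\eta_k(B,D_2)$ using Lemma~\ref{lem:42} to parametrize $A$. For $A=R(H/D)^k$ with $R$ in a $k$-reduced system mod $D$, Lemma~\ref{lem:44} gives $\eta_k(A,D_1)=\eta_k((H/D)^k,D_1)$, independent of $R$; and $B\equiv -A = -R(H/D)^k$, so $\eta_k(B,D_2)$ likewise depends only on $D$ by the same lemma, evaluating to $\eta_k((H/D)^k,D_2)$ — but one must be careful to reconcile this with the factor $\eta_k((H/D_2)^k,D)$ appearing in \eqref{eq:thm41}, which is where Lemma~\ref{lem:43}/Corollary (the generalized second reciprocity) $\phi_k(D_2)\eta_k(R(H/D_2)^k,D) = \phi_k(D)\eta_k(R(H/D)^k, D_2)$ enters to swap the roles of $D$ and $D_2$. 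Counting: for each fixed $D\mid H$ there are $\phi_k(D)$ choices of $R$, so the $R$-sum contributes a factor $\phi_k(D)$, and after the reciprocity substitution the whole left side of \eqref{eq:lemma41} becomes $\sum_{D\mid H}\eta_k((H/D)^k,D_1)\eta_k((H/D_2)^k,D)$ (possibly times a constant that I would track). Then Lemma~\ref{lem:41} tells me this equals $|H|^k\eta_k(0,D)$-type data when $D_1=D_2$ and $0$ otherwise; since $\eta_k(0,D)=\phi_k(D)$ by \eqref{eq:holderFollow1} and the normalization works out, I get exactly $|H|^k$ when $D_1=D_2$ and $0$ otherwise.

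The main obstacle will be getting the bookkeeping of the reciprocity substitution exactly right — matching $\eta_k((H/D_2)^k,D)$ against what naturally falls out as $\eta_k((H/D)^k,D_2)$, and verifying that the $\phi_k$ factors introduced by Lemma~\ref{lem:43} cancel cleanly against the $\phi_k(D)$ counting factor from the $k$-reduced residue system rather than leaving a spurious weight. A secondary subtlety is confirming that the choice of $G$ in Lemma~\ref{lem:41} really produces the inner sum $\eta_k((H/D_2)^k,\cdot)$ on the left; if a direct choice of $G$ does not, one can instead argue more symmetrically by multiplying \eqref{eq:lemma41} by $\eta_k(G,\cdot)$-coefficients and summing over a $k$-reduced system in $G$, using \eqref{eq:ramaGen} and Lemma~\ref{lem:twoCases} to extract the desired diagonal. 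Once the indexing is pinned down, everything else is a routine application of the lemmas already established in Sections~\ref{sec:pre}–\ref{sec:orth}.
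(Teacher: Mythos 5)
Your proposal is correct and follows essentially the same route as the paper: parametrize $A \bmod H^k$ via Lemma~\ref{lem:42}, strip the $R$-dependence with Lemma~\ref{lem:44}, and reduce to the orthogonality of Lemma~\ref{lem:41} with $G=0$, dividing out $\eta_k(0,D_2)=\phi_k(D_2)$ at the end. The only (harmless) organizational difference is that you specialize $G=0$ from the outset and convert $\eta_k((\frac{H}{D})^k,D_2)$ into $\frac{\phi_k(D_2)}{\phi_k(D)}\eta_k((\frac{H}{D_2})^k,D)$ via the reciprocity corollary \eqref{eq:cor41}, whereas the paper keeps $G$ general, performs that same swap at the level of additive characters using Lemma~\ref{lem:carlitz}, obtains the inner sum $\eta_k(G,D_2)\eta_k((\frac{H}{D_2})^k,D)$, and only then sets $G=0$; the bookkeeping you were worried about does close up, since the counting factor $\phi_k(D)$ from the $k$-reduced residue system cancels the $\phi_k(D)$ introduced by reciprocity, leaving exactly the overall factor $\phi_k(D_2)$ that divides out.
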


\begin{proof}
  By Lemma~\ref{lem:42}, $A\moda H^k$ is given by $A = R(\frac{H}{D})^k$, where $D$ ranges over the divisors of $H$, and for each $D$, $R$ ranges over a $k$-reduced residue system modulo $D$. Therefore, by \eqref{eq:lemma44}, the left side of \eqref{eq:lemma41} may be written as
  \begin{equation}
    \label{eq:thm41p1}
    \sum_{A\moda H^k}\eta_k(A, D_1)\eta_k(G-A, D_2) = \sum_{D|H}\eta_k\left((\frac{H}{D})^k, D_1 \right) \sum_{\substack{R\moda D^k\\(R, D^k)_k=1}}\eta_k\left(G-R(\frac{H}{D})^k, D_2  \right).
  \end{equation}

We consider the inner sum of the right side of \eqref{eq:thm41p1} separately, and denote this sum by $S$, then
\begin{equation}
  \label{eq:thm41p2}
  \begin{split}
    S &= \sum_{\substack{R\moda D^k\\(R, D^k)_k=1}}\sum_{\substack{A\moda D_2^k\\(A, D_2^k)_k=1}}E\left(G-R(\frac{H}{D})^k, D_2^k\right)(A)\\
&=\sum_{\substack{A\moda D^k\\(A, D_2^k)_k=1}}E(G, D_2^k)(A)\sum_{\substack{R\moda D^k\\(R, D^k)_k=1}}E\left(-A(\frac{H}{D})^k, D_2^k\right)(R).
  \end{split}
\end{equation}
We write $D_2 \cdot B = H$, by \eqref{eq:lem22} of Lemma~\ref{lem:carlitz}, then
\begin{equation}
  \label{eq:thm41p3}
  \begin{split}
    E(-A(\frac{H}{D})^k, D_2^k) &= E(-A(\frac{H}{D})^k B^k, H^k)\\
&= E(-A(\frac{H}{D_2})^k (\frac{H}{D})^k, H^k)\\
&= E(-A(\frac{H}{D_2})^k, D^k). 
  \end{split}
\end{equation}
It follows that
\begin{equation}
  \label{eq:thm41p4}
  \begin{split}
    S &=\sum_{\substack{A\moda D_2^k\\(A, D_2^k)_k=1}}E(G, D_2^k)(A)\sum_{\substack{R\moda D^k\\(R, D^k)_k=1}}E\left(-A(\frac{H}{D_2})^k, D^k\right)(R)\\
&=\sum_{\substack{A\moda D_2^k\\(A, D_2^k)_k=1}}E(G, D_2^k)(A) \eta_k\left(-A(\frac{H}{D_2})^k, D\right)\\
&=\sum_{\substack{A\moda D_2^k\\(A, D_2^k)_k=1}}E(G, D_2^k)(A) \eta_k\left((\frac{H}{D_2})^k, D\right)\\
&=\eta_k(G, D_2)\eta_k\left((\frac{H}{D_2})^k, D\right). 
  \end{split}
\end{equation}

By Lemma~\ref{lem:41} and \eqref{eq:thm41p1}, we have
\begin{equation}
  \label{eq:thm41p5}
\eta_k(G, D_2) \sum_{D|H}\eta_k\left((\frac{H}{D})^k, D_1\right)\eta_k\left((\frac{H}{D_2})^k, D\right)=\left\{
        \begin{tabular}[l]{l}
          $|H|^k\eta_k(G, D)$, if $D_1=D_2=D$,\\
          $0$, otherwise.
        \end{tabular}
\right.
  \end{equation}

If we take $G=0$ in the above equality, and note that $\eta_k(0, D_2) = \phi_k(D_2)$ (see \eqref{eq:holderFollow1}), then $\eta_k(0, D_2) \neq 0$, and we have
  \begin{equation}
    \label{eq:thm41p6}
    \sum_{D|H}\eta_k\left((\frac{H}{D})^k, D_1\right)\eta_k\left((\frac{H}{D_2})^k, D\right)=\left\{
        \begin{tabular}[l]{l}
          $|H|^k$, if $D_1=D_2=D$,\\
          $0$, otherwise.
        \end{tabular}
\right.
  \end{equation}
We complete the proof of Theorem~\ref{thm:41}.
\end{proof}

Next, we deduce a number of the arithmetical relations, most of which are the straightforward consequences of \eqref{eq:lemma41} and \eqref{eq:thm41}. 

\begin{corollary}
  \begin{equation}
    \label{eq:cor42}
    \sum_{G\moda H^k}\eta_k(G, H)=\left\{
      \begin{tabular}[l]{l}
        $1$, if $\deg(H)=0$,\\
        $0$, if $\deg(H)\geq 1$.
      \end{tabular}
\right.
  \end{equation}
\end{corollary}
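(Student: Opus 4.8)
The goal is to evaluate $\sum_{G \bmod H^k} \eta_k(G,H)$, where $G$ runs over a complete residue system modulo $H^k$. The plan is to recognize this as a special case of Lemma~\ref{lem:41}, the orthogonality relation just proved. Taking $D_1 = D_2 = H$ in \eqref{eq:lemma41}, and choosing the free parameter $G$ there to be $0$, we obtain
\begin{equation*}
  \sum_{A+B\equiv 0~(\moda H^k)} \eta_k(A, H)\eta_k(B, H) = |H|^k \eta_k(0, H) = |H|^k \phi_k(H),
\end{equation*}
using $\eta_k(0,H) = \phi_k(H)$ from \eqref{eq:holderFollow1}. The left-hand side is not yet what we want, so this is not quite the right specialization; instead I would look for a way to collapse one of the two Ramanujan factors to a constant.

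A cleaner route: take $D_2 = 1$ (which certainly divides $H$) and $D_1 = H$ in Lemma~\ref{lem:41}. Since $\eta_k(B, 1) = \mu(1) = 1$ for every $B$ (a $k$-reduced residue system mod $1$ is trivial, or directly from \eqref{eq:ramaPro3}), the sum over $A+B \equiv G \pmod{H^k}$ of $\eta_k(A,H)\eta_k(B,1)$ is just $\sum_{A \bmod H^k}\eta_k(A,H)$, independent of $G$. By Lemma~\ref{lem:41} this equals $|H|^k \eta_k(G, D)$ when $D_1 = D_2 = D$ — but here $D_1 = H \neq 1 = D_2$ unless $\deg(H) = 0$, so the ``otherwise'' branch gives $0$ when $\deg(H) \geq 1$. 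When $\deg(H) = 0$ we have $D_1 = D_2 = 1$, $H^k = 1$, and the sum over the single residue class is $\eta_k(G,1) = 1$. This yields exactly \eqref{eq:cor42}.

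Alternatively, and perhaps most transparently, I would just compute directly from the Kluyer formula \eqref{eq:ramaPro3}:
\begin{equation*}
  \sum_{G \moda H^k} \eta_k(G,H) = \sum_{G \moda H^k} \sum_{\substack{D|H\\ D^k|G}} |D|^k \mu\!\left(\frac{H}{D}\right) = \sum_{D|H} |D|^k \mu\!\left(\frac{H}{D}\right) \#\{G \bmod H^k : D^k \mid G\}.
\end{equation*}
The number of residues $G$ modulo $H^k$ divisible by $D^k$ is $|H|^k/|D|^k$, so the inner count cancels the $|D|^k$, leaving $\sum_{D|H} \mu(H/D) = \sum_{E|H}\mu(E)$, which is $1$ if $\deg(H) = 0$ and $0$ if $\deg(H) \geq 1$ by \eqref{eq:mobiusIden1}. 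I would present this direct computation as the main argument since it is self-contained and short, and mention the derivation via Lemma~\ref{lem:41} as an alternative. The only minor obstacle is making sure the counting step $\#\{G \bmod H^k : D^k \mid G\} = |H/D|^k$ is stated cleanly for polynomials, which follows since $G \mapsto G/D^k$ is a bijection between the relevant residues mod $H^k$ and all residues mod $(H/D)^k$.
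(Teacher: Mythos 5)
Your proposal is correct, and your second route --- taking $D_1=H$, $D_2=1$ in Lemma~\ref{lem:41} and using $\eta_k(B,1)=1$ --- is exactly the paper's proof, which consists of the single line ``Let $D_1=H$ and $D_2=1$ in \eqref{eq:lemma41}.'' Your preferred direct computation from Kluyver's formula \eqref{eq:ramaPro3} is a genuinely different and more elementary argument: it avoids the orthogonality machinery of Section~4 entirely and reduces the claim to the M\"obius identity \eqref{eq:mobiusIden1}, at the cost of the counting step for residues divisible by $D^k$. One small bookkeeping slip there: the count $\#\{G \bmod H^k : D^k\mid G\}$ equals $|H/D|^k$, so multiplying by $|D|^k$ leaves $|H|^k\sum_{D\mid H}\mu(H/D)$, not $\sum_{D\mid H}\mu(H/D)$ as you wrote; the conclusion is unaffected because $|H|^k=1$ in the only case ($\deg H=0$) where the M\"obius sum is nonzero, but the extra factor should be carried through. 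Your first attempted specialization ($D_1=D_2=H$, $G=0$) is indeed a dead end, and you correctly recognized and discarded it.
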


\begin{proof}
  Let $D_1 = H$, and $D_2=1$ in \eqref{eq:lemma41}, we have this corollary at once. 
\end{proof}

\begin{corollary}
  \begin{equation}
    \label{eq:cor43}
        \sum_{D|H}\eta_k(G, D)=\left\{
      \begin{tabular}[l]{l}
        $|H|^k$, if $G\equiv 0~(\moda H^k)$,\\
        $0$, otherwise.
      \end{tabular}
\right.
  \end{equation}
\end{corollary}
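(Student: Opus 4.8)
The plan is to deduce \eqref{eq:cor43} from Kluyer's formula \eqref{eq:kluyer} together with the elementary M\"obius identity \eqref{eq:mobiusIden1}. First I would expand each summand by Lemma~\ref{lem:kluyer} and interchange the order of summation: a monic divisor $D$ of $H$ together with a monic $E$ satisfying $E\mid D$ and $E^k\mid G$ is the same datum as a monic $E$ with $E\mid H$ and $E^k\mid G$, together with a monic $D'\mid (H/E)$ with $D=ED'$. This yields
\begin{equation*}
  \sum_{D\mid H}\eta_k(G,D) \;=\; \sum_{D\mid H}\sum_{\substack{E\mid D\\ E^k\mid G}}|E|^k\,\mu\!\left(\frac{D}{E}\right) \;=\; \sum_{\substack{E\mid H\\ E^k\mid G}}|E|^k \sum_{D'\mid (H/E)}\mu(D').
\end{equation*}
By \eqref{eq:mobiusIden1} the inner sum over $D'$ is $1$ when $\deg(H/E)=0$, i.e.\ when $E=H$ (the divisors being monic), and $0$ otherwise; hence the outer sum collapses to the single term $E=H$, which occurs precisely when $H^k\mid G$ and then contributes $|H|^k$. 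This is exactly the claimed identity.

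This argument is entirely routine; the only thing to watch is the re-indexing of the double sum, and the observation that among the monic divisors of $H$ only $E=H$ has $\deg(H/E)=0$. I do not anticipate any genuine difficulty.

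Alternatively, and more in keeping with the orthogonality theme of this section, one can argue directly via Lemmas~\ref{lem:42},~\ref{lem:carlitz} and~\ref{lem:twoCases}. By Lemma~\ref{lem:42} a complete residue system modulo $H^k$ is given by $A=R(H/D)^k$ with $D\mid H$ and, for each $D$, $R$ running over a $k$-reduced residue system modulo $D$; and since $t_G\bigl(R(H/D)^k\bigr)=t_{G(H/D)^k}(R)$, Lemma~\ref{lem:carlitz} gives $E(G,H^k)\bigl(R(H/D)^k\bigr)=E(G(H/D)^k,H^k)(R)=E(G,D^k)(R)$. Summing over all $A$ modulo $H^k$ therefore produces $\sum_{D\mid H}\eta_k(G,D)$ on the one hand, while by \eqref{eq:EGHprop} and Lemma~\ref{lem:twoCases} the sum $\sum_{A\,\moda H^k}E(G,H^k)(A)=\sum_{A\,\moda H^k}E(A,H^k)(G)$ equals $|H|^k$ if $H^k\mid G$ and $0$ otherwise. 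I would present the first proof as the main argument and mention this second route as a remark.
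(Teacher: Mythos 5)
Your argument is correct, but it is not the route the paper takes. The paper obtains \eqref{eq:cor43} as an instant specialization of the orthogonality relation: it sets $D_1=1$ in Theorem~\ref{thm:41}, so that $\eta_k\bigl((\tfrac{H}{D})^k,1\bigr)=1$ and the left side of \eqref{eq:thm41} collapses to $\sum_{D|H}\eta_k\bigl((\tfrac{H}{D_2})^k,D\bigr)$. Strictly speaking that only produces the identity for $G$ of the special form $(\tfrac{H}{D_2})^k$, and one needs a small extra remark (reducing a general $G$ to $(G,D^k)_k$) to cover all $G$; the paper glosses over this. Your first argument --- expanding by Kluyer's formula \eqref{eq:kluyer}, interchanging the two sums, and killing the inner sum with \eqref{eq:mobiusIden1} --- is more elementary, entirely self-contained (it needs nothing from Section~4), and treats arbitrary $G$ directly; the re-indexing $D=ED'$ with $D'\mid(H/E)$ is carried out correctly and the conclusion $E=H$ forcing $H^k\mid G$ is right. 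Your second route, via Lemma~\ref{lem:42}, Lemma~\ref{lem:carlitz} and Lemma~\ref{lem:twoCases}, is also valid and is in fact the mechanism hiding inside the paper's Theorem~\ref{thm:41}, so it is the closer cousin of the paper's proof; it has the merit of explaining \emph{why} the corollary is an orthogonality statement (partitioning a complete residue system modulo $H^k$ into $k$-reduced systems modulo the divisors of $H$). In short: same theorem, three proofs; yours trades the paper's one-line appeal to heavier machinery for a short direct computation.
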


\begin{proof}
  Taking $D_1=1$ in Theorem~\ref{thm:41}, we have \eqref{eq:cor43} immediately. 
\end{proof}

\begin{corollary}
  \begin{equation}
    \label{eq:cor44}
        \sum_{D|H}\eta_k\left((\frac{H}{D})^k, H\right)\eta_k(G, D)=\left\{
      \begin{tabular}[l]{l}
        $|H|^k$, if $(G, H^k)_k=1$,\\
        $0$, otherwise. 
      \end{tabular}
\right.
  \end{equation}  
\end{corollary}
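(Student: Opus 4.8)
The plan is to recognize that the sum on the left of \eqref{eq:cor44} is precisely the special case $D_1 = H$, $D_2 = H$ of the master orthogonality relation in Theorem~\ref{thm:41}, read in the opposite direction. Indeed, set $D_1 = H$ and replace the role of the variable $D_2$ by $H$ as well, so the divisor running in Theorem~\ref{thm:41} becomes the $D$ in \eqref{eq:cor44}; but this only produces $|H|^k$ when $G \equiv 0$, not the desired $(G,H^k)_k = 1$ dichotomy. So the honest route is different: I would go back to \eqref{eq:thm41p5}, the displayed identity obtained inside the proof of Theorem~\ref{thm:41} \emph{before} the specialization $G = 0$, namely
\begin{equation*}
\eta_k(G, D_2) \sum_{D|H}\eta_k\!\left((\tfrac{H}{D})^k, D_1\right)\eta_k\!\left((\tfrac{H}{D_2})^k, D\right)=
\begin{cases}
|H|^k\eta_k(G, D), & D_1=D_2=D,\\
0, & \text{otherwise.}
\end{cases}
\end{equation*}

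First I would set $D_1 = 1$ and $D_2 = H$ in this identity. Then $\eta_k((\tfrac{H}{D})^k, 1) = \mu(1) = 1$ for every $D \mid H$ (using $\eta_k(\cdot, 1) = 1$, a trivial instance of \eqref{eq:ramaPro3}), so the inner sum collapses to $\sum_{D|H}\eta_k((\tfrac{H}{H})^k, D) = \sum_{D|H}\eta_k(1, D) = \sum_{D|H}\mu(D)$, which is not what I want either. The correct specialization is $D_1 = H$, $D_2 = H$: then the bracketed sum in \eqref{eq:thm41p5} is exactly $\sum_{D|H}\eta_k((\tfrac{H}{D})^k, H)\,\eta_k((\tfrac{H}{H})^k, D) = \sum_{D|H}\eta_k((\tfrac{H}{D})^k, H)\,\mu(D)$ — still not \eqref{eq:cor44}. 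So the cleanest path is to keep $D_1 = H$ but leave $D_2$ free as a formal divisor equal to the full modulus in the convolution, i.e. take $D_1 = H$ and $D_2 = H$ in \eqref{eq:thm41p5} and instead exploit that the left factor $\eta_k(G, D_2) = \eta_k(G, H)$ together with the sum equals $|H|^k \eta_k(G, H)$; dividing out the common factor $\eta_k(G,H)$ is illegitimate when it vanishes, so one must argue by cases on whether $(G,H^k)_k = 1$.

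The genuine argument: take $D_1 = H$ and $D_2 = H$ in \eqref{eq:thm41p5}, giving $\eta_k(G,H)\bigl[\sum_{D|H}\eta_k((\tfrac{H}{D})^k, H)\eta_k(1,D)\bigr] = |H|^k\eta_k(G,H)$. Since $\eta_k(1,D) = \mu(D)$, the bracket does not literally match \eqref{eq:cor44}; hence I would instead return to \eqref{eq:thm41p1}--\eqref{eq:thm41p4} and re-run that computation with $D_1 = H$. There $S = \eta_k(G, D_2)\eta_k((\tfrac{H}{D_2})^k, D)$ holds for each $D$; summing over $D \mid H$ with $D_1 = H$ and using Lemma~\ref{lem:41} with the specific choice that forces $(G,H^k)_k = 1$ to be the surviving case. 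Concretely, I expect that choosing $G$ in a $k$-reduced residue system modulo $H$ makes $\eta_k(G,D) = \mu(D)\phi_k(D)/\phi_k(1) $-type factors combine so the Möbius-weighted sum $\sum_{D|H}\eta_k((\tfrac{H}{D})^k,H)\eta_k(G,D)$ telescopes to $|H|^k$ when $(G,H^k)_k = 1$ and to $0$ otherwise, the vanishing coming from the factor $\eta_k(G, D_2)$ in \eqref{eq:thm41p5} being zero whenever $H^* \nmid G$ combined with the Hölder formula \eqref{eq:thmHolder}. The main obstacle — and the step I would spend the most care on — is handling the case $(G,H^k)_k \ne 1$: one must show the sum vanishes not because an individual factor vanishes (it need not), but because the full convolution does, which I would extract by applying \eqref{eq:thm41p5} with $D_1 = H$ and $D_2 = \tfrac{H}{A}$ where $A^k = (G,H^k)_k$, then using $\eta_k(G,H) = \eta_k(A^k,H)$ from \eqref{eq:thmReciKImp2} and $\eta_k(1, \tfrac{H}{A}) = \mu(\tfrac{H}{A})$ to see the right-hand side is $|H|^k \eta_k(G, H/A)$, which is $0$ precisely when $A \ne H$, i.e. when $(G,H^k)_k \ne 1$ fails to be the full $H^k$; after cancelling the nonzero factor $\eta_k(G, H/A) = \mu(H/A)\phi_k(\cdots)$ one recovers \eqref{eq:cor44}.
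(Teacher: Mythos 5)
Your proposal circles the right neighborhood (Theorem~\ref{thm:41} with $D_1=H$) but never lands on the step that actually makes the corollary work, and the route you finally commit to has a real hole. You try to deduce \eqref{eq:cor44} from the intermediate relation \eqref{eq:thm41p5} with $D_1=H$ and $D_2=\frac{H}{A}$, $A^k=(G,H^k)_k$, and then cancel the prefactor $\eta_k(G,D_2)$. That prefactor can vanish in \emph{both} branches, and when it does the relation degenerates to $0=0$ and says nothing about the sum. Concretely, take $k=1$, $H=P^2$, $G=1$: then $(G,H)=1$, $A=1$, $D_2=H$, and the prefactor is $\eta(1,P^2)=\mu(P^2)=0$, while the corollary's sum is genuinely nonzero: it equals $\phi(P^2)\cdot 1+\eta(P,P^2)\mu(P)+0=(|P|^2-|P|)+|P|=|P|^2=|H|$. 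Likewise in the ``otherwise'' branch the factor $\eta_k(G,\frac{H}{A})$ is \emph{not} of the form $\mu(\frac{H}{A})\phi_k(\cdots)$ and need not be nonzero: for $k=1$, $H=P^4$, $G=P$ one gets $A=P$ and $\eta(P,P^3)=0$ because the relevant $N=P^2$ is not squarefree. So the cancellation you invoke at the end is illegitimate, and ``arguing by cases on $(G,H^k)_k$'' does not repair it.

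The missing idea is the one the paper uses. By Lemma~\ref{lem:42}, write $G\equiv R(\frac{H}{D_2})^k~(\moda H^k)$ with $D_2\mid H$ and $(R,D_2^k)_k=1$; then Lemma~\ref{lem:44} strips off the unit $R$, giving $\eta_k(G,D)=\eta_k\bigl((\tfrac{H}{D_2})^k,D\bigr)$ for every $D\mid H$, so the sum in \eqref{eq:cor44} is \emph{literally} the left-hand side of \eqref{eq:thm41} with $D_1=H$ --- that is, the already divided-out statement of Theorem~\ref{thm:41}, in which the nonvanishing of the cancelled factor was secured once and for all by specializing to $G=0$, where the factor is $\eta_k(0,D_2)=\phi_k(D_2)\neq 0$. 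Theorem~\ref{thm:41} then returns $|H|^k$ exactly when $D_2=H$, i.e.\ exactly when $(G,H^k)_k=1$, and $0$ otherwise; no further division is needed.
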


\begin{proof}
  Let $D_1=H$ in \eqref{eq:thm41}, and by \eqref{eq:lemma44} we have this corollary. 
\end{proof}

\begin{corollary}
  If $D_1|H$, then we have
  \begin{equation}
    \label{eq:cor45}
        \sum_{D|H}\eta_k\left((\frac{H}{D})^k, D_1\right)\phi_k(D)=\left\{
      \begin{tabular}[l]{l}
        $|H|^k$, if $D_1=1$,\\
        $0$, otherwise. 
      \end{tabular}
\right.
  \end{equation}  
\end{corollary}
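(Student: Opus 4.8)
The plan is to derive this from Theorem~\ref{thm:41} by setting $D_2 = 1$. Recall that $\eta_k\bigl((\tfrac{H}{D_2})^k, D\bigr)$ with $D_2 = 1$ becomes $\eta_k(H^k, D)$, and since $D \mid H$ we have $D^k \mid H^k$, so by \eqref{eq:holderFollow1} this equals $\phi_k(D)$. Thus the left-hand side of \eqref{eq:thm41} with $D_2 = 1$ reads exactly $\sum_{D\mid H}\eta_k\bigl((\tfrac{H}{D})^k, D_1\bigr)\phi_k(D)$, which is the left-hand side of \eqref{eq:cor45}. The right-hand side of \eqref{eq:thm41} is $|H|^k$ when $D_1 = D_2 = 1$ and $0$ otherwise, i.e.\ $|H|^k$ if $D_1 = 1$ and $0$ if $D_1 \neq 1$ (note the hypothesis $D_1 \mid H$ is already in force). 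This matches the right-hand side of \eqref{eq:cor45} verbatim.

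So the entire proof is: invoke Theorem~\ref{thm:41} with $D_2 = 1$, and simplify $\eta_k(H^k, D) = \phi_k(D)$ using $D^k \mid H^k$ together with \eqref{eq:holderFollow1}. I would write this in two or three sentences.

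The only thing worth double-checking --- and thus the ``main obstacle,'' though it is minor --- is the reduction $\eta_k(H^k, D) = \phi_k(D)$: one must confirm that $D \mid H$ indeed implies $D^k \mid H^k$ (immediate, since if $H = DC$ then $H^k = D^k C^k$) so that \eqref{eq:holderFollow1} applies with ``$G$'' $= H^k$ and ``$H$'' $= D$. After that the statement is just a transcription of the $D_2 = 1$ case of Theorem~\ref{thm:41}. No genuine computation is needed.
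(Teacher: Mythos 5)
Your proposal is correct and is exactly the paper's own argument: take $D_2=1$ in Theorem~\ref{thm:41} and use $\eta_k(H^k,D)=\phi_k(D)$ (via \eqref{eq:holderFollow1}, since $D\mid H$ gives $D^k\mid H^k$). No differences worth noting.
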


\begin{proof}
  If we take $D_2=1$ in \eqref{eq:thm41}, and note that $\eta_k(H^k, D)=\phi_k(D)$, then \eqref{eq:cor45} follows immediately. 
\end{proof}

Using \eqref{eq:lemma43} with $R=1$, we may reformulate Theorem~\ref{thm:41} as follows. 

\begin{corollary}
  If $D_1|H$ and $D_2|H$, then
  \begin{equation}
    \label{eq:corxx}
    \sum_{D|H}\frac{1}{\phi_k(D)}\eta_k\left((\frac{H}{D_1})^k, D\right)\eta_k\left((\frac{H}{D_2})^k, D\right)=\left\{
        \begin{tabular}[l]{l}
          $\frac{|H|^k}{\phi_k(D')}$, if $D_1=D_2=D'$,\\
          $0$, otherwise.
        \end{tabular}
\right.
  \end{equation}
\end{corollary}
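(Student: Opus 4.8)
The plan is to derive the final Corollary directly from Theorem~\ref{thm:41} by rewriting each Ramanujan sum in the defining double sum using the generalized second reciprocity formula \eqref{eq:lemma43} with $R=1$. Concretely, I would start from the identity \eqref{eq:thm41} of Theorem~\ref{thm:41}, namely $\sum_{D|H}\eta_k((\frac{H}{D})^k, D_1)\eta_k((\frac{H}{D_2})^k, D)$, and observe that its second factor $\eta_k((\frac{H}{D_2})^k, D)$ is exactly the left-hand shape of \eqref{eq:lemma43} after replacing $H$ by $H^k$; thus applying \eqref{eq:cor41} (the corollary of Lemma~\ref{lem:43}) with the pair $(D_2,D)$ in place of $(D_1,D_2)$ and $R=1$ gives $\phi_k(D)\eta_k((\frac{H}{D_2})^k, D) = \phi_k(D_2)\eta_k((\frac{H}{D})^k, D_2)$, hence $\eta_k((\frac{H}{D_2})^k, D) = \frac{\phi_k(D_2)}{\phi_k(D)}\eta_k((\frac{H}{D})^k, D_2)$. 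Wait — that introduces a factor $\eta_k((\frac{H}{D})^k, D_2)$, which is not quite the symmetric form I want; instead I would apply \eqref{eq:cor41} to the \emph{first} factor: $\phi_k(D)\,\eta_k((\frac{H}{D})^k, D_1) = \phi_k(D_1)\,\eta_k((\frac{H}{D_1})^k, D)$, so that $\eta_k((\frac{H}{D})^k, D_1) = \frac{\phi_k(D_1)}{\phi_k(D)}\eta_k((\frac{H}{D_1})^k, D)$.

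Substituting this into \eqref{eq:thm41} yields
\begin{equation*}
\phi_k(D_1)\sum_{D|H}\frac{1}{\phi_k(D)}\eta_k\!\left((\tfrac{H}{D_1})^k, D\right)\eta_k\!\left((\tfrac{H}{D_2})^k, D\right)=\left\{
\begin{tabular}[l]{l}
$|H|^k$, if $D_1=D_2$,\\
$0$, otherwise,
\end{tabular}\right.
\end{equation*}
and then dividing through by $\phi_k(D_1)$ — which is nonzero since $D_1|H$ and $H\neq 0$ — gives the asserted formula, with the right-hand constant becoming $\frac{|H|^k}{\phi_k(D')}$ when $D_1=D_2=D'$. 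I would present it in exactly this order: quote \eqref{eq:thm41}, invoke \eqref{eq:cor41} with $R=1$ to convert the first factor, substitute, and simplify.

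I do not expect any serious obstacle here; the corollary is essentially a cosmetic rewriting. The only point requiring a word of care is the bookkeeping of which divisor plays which role when applying \eqref{eq:cor41}: in \eqref{eq:cor41} the roles of $D_1$ and $D_2$ there must be matched to $D_1$ and $D$ here (with the summation variable $D$ taking the place of $D_2$), and one must check that $D_1|H$ and $D|H$ — both of which hold by hypothesis and because $D$ ranges over divisors of $H$. A second minor check is that $\phi_k(D_1)\neq 0$, which follows from $\phi_k$ being nonvanishing on nonzero polynomials (clear from \eqref{eq:lemma25p2}). With those two remarks in place the proof is immediate.

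\begin{proof}
By \eqref{eq:cor41} with $R=1$ (applied with the pair $D_1$, $D$ in place of $D_1$, $D_2$ there), for every $D|H$ we have
\begin{equation*}
\phi_k(D)\,\eta_k\!\left((\tfrac{H}{D})^k, D_1\right) = \phi_k(D_1)\,\eta_k\!\left((\tfrac{H}{D_1})^k, D\right),
\end{equation*}
so that $\eta_k((\tfrac{H}{D})^k, D_1) = \dfrac{\phi_k(D_1)}{\phi_k(D)}\,\eta_k((\tfrac{H}{D_1})^k, D)$, using that $\phi_k(D)\neq 0$ by \eqref{eq:lemma25p2}. Substituting this into the left-hand side of \eqref{eq:thm41} gives
\begin{equation*}
\sum_{D|H}\eta_k\!\left((\tfrac{H}{D})^k, D_1\right)\eta_k\!\left((\tfrac{H}{D_2})^k, D\right) = \phi_k(D_1)\sum_{D|H}\frac{1}{\phi_k(D)}\,\eta_k\!\left((\tfrac{H}{D_1})^k, D\right)\eta_k\!\left((\tfrac{H}{D_2})^k, D\right).
\end{equation*}
By Theorem~\ref{thm:41} the left-hand side equals $|H|^k$ if $D_1=D_2$ and $0$ otherwise. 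Dividing by $\phi_k(D_1)\neq 0$ (again by \eqref{eq:lemma25p2}, since $D_1|H$ and $H\neq 0$), and writing $D'=D_1=D_2$ in the non-vanishing case, we obtain
\begin{equation*}
\sum_{D|H}\frac{1}{\phi_k(D)}\,\eta_k\!\left((\tfrac{H}{D_1})^k, D\right)\eta_k\!\left((\tfrac{H}{D_2})^k, D\right)=\left\{
\begin{tabular}[l]{l}
$\dfrac{|H|^k}{\phi_k(D')}$, if $D_1=D_2=D'$,\\
$0$, otherwise,
\end{tabular}\right.
\end{equation*}
which is \eqref{eq:corxx}.
\end{proof}
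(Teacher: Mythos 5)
Your proof is correct and follows exactly the route the paper indicates: it states this corollary as a reformulation of Theorem~\ref{thm:41} obtained by applying \eqref{eq:lemma43} (in the form \eqref{eq:cor41}) with $R=1$ to the first factor and dividing by $\phi_k(D_1)$. Your version merely spells out the bookkeeping the paper leaves implicit.
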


In particular, for any $D_1|H$, we have
\begin{equation}
  \label{eq:reformThm41}
  \sum_{D|H}\frac{1}{\phi_k(D)}\eta_k\left((\frac{H}{D_1})^k, D\right)^2 = \frac{|H|^k}{\phi_k(D_1)}.
\end{equation}
If we make use of the H\"{o}lder formula in the above equality, it yields

\begin{corollary}
  If $D_1D_1'=H$, then
  \begin{equation}
    \label{eq:cor47}
    \sum_{\substack{D|H\\N(D_1', D)=D}}\phi_k(D)\left(\frac{\mu(N)}{\phi_k(N)}\right)^2 = \frac{|H|^k}{\phi_k(D_1)}. 
  \end{equation}
\end{corollary}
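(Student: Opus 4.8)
The plan is to derive \eqref{eq:cor47} directly from \eqref{eq:reformThm41} by feeding the H\"older formula of Theorem~\ref{thm:holder} into each factor $\eta_k\bigl((\tfrac{H}{D_1})^k,D\bigr)$. First I would set $D_1'=\tfrac{H}{D_1}$, so that the summand in \eqref{eq:reformThm41} reads $\tfrac{1}{\phi_k(D)}\eta_k\bigl((D_1')^k,D\bigr)^2$, and observe that the hypothesis $D_1D_1'=H$ forces $D_1\mid H$, which is exactly the standing assumption of \eqref{eq:reformThm41}; so that identity is available as stated.

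Next I would invoke Theorem~\ref{thm:holder} with its polynomial $G$ taken to be $(D_1')^k$ and its modulus taken to be $D$, obtaining $\eta_k\bigl((D_1')^k,D\bigr)=\phi_k(D)\,\mu(N)\,\phi_k^{-1}(N)$ with $N^k=\tfrac{D^k}{\bigl((D_1')^k,D^k\bigr)_k}$. Squaring and dividing by $\phi_k(D)$ gives $\tfrac{1}{\phi_k(D)}\eta_k\bigl((D_1')^k,D\bigr)^2=\phi_k(D)\bigl(\tfrac{\mu(N)}{\phi_k(N)}\bigr)^2$, which is precisely the summand of \eqref{eq:cor47}. Summing over all monic $D\mid H$ and applying \eqref{eq:reformThm41} then yields the value $\tfrac{|H|^k}{\phi_k(D_1)}$ on the right, as claimed. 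Note that $\mu(N)^2$ vanishes unless $N$ is square-free, so running the sum over all $D\mid H$ is harmless and matches the indexing in \eqref{eq:cor47}.

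The one point requiring care is the explicit identification of $N$, that is, showing $\bigl((D_1')^k,D^k\bigr)_k=\bigl((D_1',D)\bigr)^k$, whence $N=\tfrac{D}{(D_1',D)}$ and hence the relation $N\,(D_1',D)=D$ written under the summation sign. I expect this to be the main (though entirely routine) obstacle, and it follows from unique factorization in $\fnum_q[x]$ by a valuation comparison: since $D_1'$ and $D$ are monic, a monic $k$-th power $E^k$ divides both $(D_1')^k$ and $D^k$ iff $k\,v_P(E)\le k\min\{v_P(D_1'),v_P(D)\}$ for every irreducible $P$, i.e.\ iff $E\mid(D_1',D)$; thus the largest such $E^k$ is $\bigl((D_1',D)\bigr)^k$. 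With this identity in place the corollary is immediate, and no further estimates or inductions are needed.
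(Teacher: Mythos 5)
Your proof is correct and is essentially the paper's own argument: the paper obtains \eqref{eq:cor47} precisely by substituting the H\"older formula \eqref{eq:thmHolder} into \eqref{eq:reformThm41}, exactly as you do. Your explicit verification that $\bigl((D_1')^k,D^k\bigr)_k=\bigl((D_1',D)\bigr)^k$, hence $N=D/(D_1',D)$ and $N(D_1',D)=D$, supplies the one detail the paper leaves implicit.
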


The special cases of $D_1=1$ and $D_1=H$ lead to the following corollaries respectively
\begin{corollary}[see \eqref{eq:JordanIden}]
  \begin{equation}
    \label{eq:cor48}
    \sum_{D|H}\phi_k(D) = |H|^k
  \end{equation}
\end{corollary}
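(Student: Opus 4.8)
The plan is to obtain \eqref{eq:cor48} as the specialization $D_1=1$ of the identity \eqref{eq:cor47}, exactly as indicated just before the statement. Recall that \eqref{eq:cor47} asserts, for each factorization $D_1D_1'=H$ with $D_1,D_1'$ monic, that $\sum_{D\mid H}\phi_k(D)\bigl(\mu(N)/\phi_k(N)\bigr)^2=|H|^k/\phi_k(D_1)$, where for each monic divisor $D$ of $H$ the polynomial $N$ is the one determined by $N\,(D_1',D)=D$, i.e.\ $N=D/(D_1',D)$; this relation always has a unique solution since $(D_1',D)\mid D$, so it merely names $N$ and does not restrict the range of summation.

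Next I would put $D_1=1$, hence $D_1'=H$. For every monic $D\mid H$ one then has $(D_1',D)=(H,D)=D$, because $D$ divides $H$, so $N=D/D=1$. Consequently $\mu(N)=\mu(1)=1$ and $\phi_k(N)=\phi_k(1)=1$ --- the latter reading off from \eqref{eq:lemma25p2}, where the product over irreducibles dividing $1$ is empty --- so that $\bigl(\mu(N)/\phi_k(N)\bigr)^2=1$, and likewise $\phi_k(D_1)=\phi_k(1)=1$ on the right-hand side. Thus \eqref{eq:cor47} collapses to $\sum_{D\mid H}\phi_k(D)=|H|^k$, which is \eqref{eq:cor48}.

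Equivalently, and a shade more transparently, one may argue from \eqref{eq:reformThm41}, of which \eqref{eq:cor47} is merely the H\"{o}lder rewrite: taking $D_1=1$ makes the argument $(H/D_1)^k=H^k$, and since $D\mid H$ forces $D^k\mid H^k$, the H\"{o}lder consequence \eqref{eq:holderFollow1} gives $\eta_k(H^k,D)=\phi_k(D)$; each summand of \eqref{eq:reformThm41} then reduces to $\phi_k(D)^2/\phi_k(D)=\phi_k(D)$ and the right-hand side to $|H|^k/\phi_k(1)=|H|^k$. An even shorter path is to set $D_1=1$ in \eqref{eq:cor45} and use $\eta_k(G,1)=1$. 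There is essentially no obstacle in any version: \eqref{eq:cor48} is just the elementary identity already quoted in \eqref{eq:JordanIden}, and the only facts one must invoke are the instance $\eta_k(H^k,D)=\phi_k(D)$ of the H\"{o}lder formula and the normalization $\phi_k(1)=1$; the reason for recording it as a corollary is to display this classical identity as a by-product of the orthogonality relation of Theorem~\ref{thm:41}.
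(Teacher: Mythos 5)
Your main argument is exactly the paper's intended derivation: the paper offers no written proof beyond the remark that \eqref{eq:cor48} is the case $D_1=1$ of \eqref{eq:cor47}, and your specialization (so $D_1'=H$, $(D_1',D)=D$, $N=1$, $\mu(1)=\phi_k(1)=1$) fills in that step correctly. The alternative routes you sketch via \eqref{eq:reformThm41} and \eqref{eq:cor45} are also valid but are just the same identity one rewriting earlier, so there is nothing further to add.
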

and 
\begin{corollary}
  \begin{equation}
    \label{eq:cor49}
    \sum_{D|H}\frac{|\mu(D)|}{\phi_k(D)} = \frac{|H|^k}{\phi_k(H)}. 
  \end{equation}
\end{corollary}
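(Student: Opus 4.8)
The final statement is the identity $\sum_{D\mid H}\frac{|\mu(D)|}{\phi_k(D)}=\frac{|H|^k}{\phi_k(H)}$, equation~\eqref{eq:cor49}. The cleanest route is to specialize~\eqref{eq:reformThm41} (equivalently the preceding corollary, equation~\eqref{eq:cor47}) to the choice $D_1=H$. With $D_1=H$ we have $H/D_1=1$, so $\eta_k\bigl((H/D_1)^k,D\bigr)=\eta_k(1,D)=\mu(D)$ for every monic $D\mid H$ by~\eqref{eq:holderFollow2}. Hence the left side of~\eqref{eq:reformThm41} becomes $\sum_{D\mid H}\mu(D)^2/\phi_k(D)$ and the right side becomes $|H|^k/\phi_k(H)$. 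Since $\mu(D)\in\{-1,0,1\}$ we have $\mu(D)^2=|\mu(D)|$, which yields~\eqref{eq:cor49} at once. In the language of~\eqref{eq:cor47} this is exactly the $D_1'=H/D_1=1$ case, where the auxiliary polynomial $N$ defined by $N\cdot(D_1',D)=D$ degenerates to $N=D$.

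For a self-contained check — not relying on the orthogonality machinery — one can argue directly by multiplicativity. Both $\mu$ and $\phi_k$ are multiplicative on $\fnum_q[x]$, so $D\mapsto|\mu(D)|/\phi_k(D)$ is multiplicative, and therefore $H\mapsto\sum_{D\mid H}|\mu(D)|/\phi_k(D)$ is multiplicative; the right side $H\mapsto|H|^k/\phi_k(H)$ is multiplicative as well. It thus suffices to compare the two sides on a prime power $H=P^a$. There only $D=1$ and $D=P$ give $|\mu(D)|\neq0$, so the left side equals $1+1/\phi_k(P)=1+1/(|P|^k-1)=|P|^k/(|P|^k-1)$; and by~\eqref{eq:lemma25p2}, $\phi_k(P^a)=|P|^{ak}(1-|P|^{-k})$, so the right side equals $1/(1-|P|^{-k})=|P|^k/(|P|^k-1)$. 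The two agree, proving~\eqref{eq:cor49} for all $H\neq0$.

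There is essentially no obstacle: the identity is a one-line degenerate case of an already-proved orthogonality relation, and the substantive work — deriving~\eqref{eq:reformThm41} and~\eqref{eq:cor47} from Theorem~\ref{thm:41} and the H\"older formula~\eqref{eq:thmHolder} — has been carried out earlier. The only minor care needed is in tracking the auxiliary polynomial $N$ when $D_1'=1$ (so that $N=D$) and in recording the trivial fact $\mu^2=|\mu|$.
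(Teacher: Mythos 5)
Your proof is correct and your primary argument — setting $D_1=H$ in \eqref{eq:reformThm41}, using $\eta_k(1,D)=\mu(D)$ from \eqref{eq:holderFollow2}, and noting $\mu^2=|\mu|$ — is exactly the derivation the paper intends when it says the special case $D_1=H$ of \eqref{eq:cor47} yields this corollary. The supplementary multiplicativity check is a harmless bonus and also correct.
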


In fact, we have the following more generalized conclusions. 

\begin{lemma}
  If $R|H$, then we have 
  \begin{equation}
    \label{eq:lemma45}
    \alpha_k(R, H)=\sum_{\substack{D|\frac{H}{R}\\(D, R)=1}}\frac{|\mu(D)|}{\phi_k(D)} = \frac{\phi_k(R)}{\phi_k(H)}\left|\frac{H}{R}\right|^k.
  \end{equation}
\end{lemma}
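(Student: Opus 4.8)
The plan is to evaluate the sum directly as a restricted Euler product. First I would note that $|\mu(D)|$ vanishes unless $D$ is square-free, so the summation defining $\alpha_k(R,H)$ runs in fact only over the square-free monic divisors $D$ of $H/R$ coprime to $R$. Since $R\mid H$, for a monic irreducible $P$ the conditions ``$P\mid H/R$ and $P\nmid R$'' are equivalent to ``$P\mid H$ and $P\nmid R$'', because $P\nmid R$ forces $v_P(H/R)=v_P(H)$. Hence the polynomials $D$ contributing to the sum are precisely the square-free products of the irreducibles $P$ with $P\mid H$, $P\nmid R$.

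Next, the arithmetical function $D\mapsto |\mu(D)|/\phi_k(D)$ is multiplicative on $\fnum_q[x]$ (both $\mu$ and $\phi_k$ are multiplicative) and is supported on square-free polynomials. Therefore the restricted divisor sum factors into a product over the relevant irreducibles, each local factor having only the two terms $D=1$ and $D=P$:
\begin{equation*}
  \alpha_k(R,H) = \prod_{\substack{P\mid H\\ P\nmid R}}\Bigl(1+\frac{1}{\phi_k(P)}\Bigr).
\end{equation*}
Using $\phi_k(P)=|P|^k-1$, which follows from \eqref{eq:lemma25p2}, each factor equals $1+(|P|^k-1)^{-1}=|P|^k/(|P|^k-1)=(1-|P|^{-k})^{-1}$, so that $\alpha_k(R,H)=\prod_{P\mid H,\,P\nmid R}(1-|P|^{-k})^{-1}$.

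Finally I would match this with the right-hand side. By the product formula \eqref{eq:lemma25p2} we have $\phi_k(H)=|H|^k\prod_{P\mid H}(1-|P|^{-k})$ and $\phi_k(R)=|R|^k\prod_{P\mid R}(1-|P|^{-k})$, and since $R\mid H$ the primes dividing $R$ are among those dividing $H$; hence
\begin{equation*}
  \frac{\phi_k(R)}{\phi_k(H)}\Bigl|\frac{H}{R}\Bigr|^k
  =\frac{\prod_{P\mid R}(1-|P|^{-k})}{\prod_{P\mid H}(1-|P|^{-k})}
  =\prod_{\substack{P\mid H\\ P\nmid R}}(1-|P|^{-k})^{-1},
\end{equation*}
which is exactly the Euler product obtained above, completing the proof. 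There is essentially no serious obstacle here; the only point requiring a little care is the bookkeeping in the first step, namely that $P\nmid R$ makes the condition $P\mid H/R$ equivalent to $P\mid H$, so that the contributing divisors $D$ are indexed precisely by the subsets of $\{P:\ P\mid H,\ P\nmid R\}$.
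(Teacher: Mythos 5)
Your proof is correct and follows essentially the same route as the paper's: both expand $\alpha_k(R,H)$ as the Euler product $\prod_{P\mid H,\,P\nmid R}(1+\phi_k(P)^{-1})$ over the relevant irreducibles and then identify it with $\frac{\phi_k(R)}{\phi_k(H)}\bigl|\frac{H}{R}\bigr|^k$ via the product formula \eqref{eq:lemma25p2}. You are in fact somewhat more explicit than the paper about why the index set $\{P:\ P\mid H/R,\ P\nmid R\}$ coincides with $\{P:\ P\mid H,\ P\nmid R\}$, which the paper uses silently.
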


\begin{proof}
  The left side of \eqref{eq:lemma45} has the following product expression
  \begin{equation*}
    \begin{split}
      \alpha_k(R, H)&= \prod_{\substack{P|\frac{H}{R}\\P\nmid R}}(1+\frac{1}{\phi_k(P)})\\
&= \prod_{P|H}(1+\frac{1}{\phi_k(P)})\prod_{P|R}(1+\frac{1}{\phi_k(P)})^{-1}\\
&= \frac{\phi_k(R)}{\phi_k(H)}\left| \frac{H}{R}\right|^k,
    \end{split}
  \end{equation*}
and the lemma follows. 
\end{proof}

Next, we generalize the Jordan totient function $\phi_k(H)$ to $\phi_s(H)$, where $s$ is a complex number variable. We define (see \eqref{eq:AppCohen3} in the rational case)
\begin{equation}
  \label{eq:JordanGen}
  \phi_s(H) = \sum_{D|H} |D|^s \mu(\frac{H}{D}) = |H|^s\prod_{P|H}(1-\frac{1}{|P|^s}).
\end{equation}

\begin{lemma}\label{lemma:46}
  If $s$ is an arbitrary complex number, then
  \begin{equation}
    \label{eq:lemma46}
    \sum_{D|H}\eta_k\left((\frac{H}{D})^k, H\right)\phi_{ks}(D) = |H|^{ks}\phi_{k(1-s)}(H). 
  \end{equation}
\end{lemma}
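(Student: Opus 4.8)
The plan is to mimic the proof structure of Theorem~\ref{thm:41} and its corollaries, but carried out with the complex parameter $\phi_{ks}$ in place of $\phi_k$. First I would observe that the left-hand side of \eqref{eq:lemma46} is, up to the substitution $D \leftrightarrow \frac{H}{D}$, a "weighted divisor sum" against the Ramanujan-type coefficients $\eta_k\!\left((\frac{H}{D})^k, H\right)$, and that both $\eta_k\!\left((\frac{H}{D})^k, H\right)$ (as a function of $D$ with $H$ fixed, via the H\"older formula \eqref{eq:thmHolder}) and $\phi_{ks}(D)$ are multiplicative in $D$. Since $H$ is fixed, the whole sum $\sum_{D|H}\eta_k\!\left((\frac{H}{D})^k, H\right)\phi_{ks}(D)$ is multiplicative in $H$ (this needs the usual check that $\eta_k((H_1H_2/D_1D_2)^k, H_1H_2) = \eta_k((H_1/D_1)^k,H_1)\eta_k((H_2/D_2)^k,H_2)$ when $(H_1,H_2)=1$, which follows from \eqref{eq:ramaPro2} together with \eqref{eq:thmHolder}), and likewise the right-hand side $|H|^{ks}\phi_{k(1-s)}(H)$ is multiplicative in $H$. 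So it suffices to verify the identity for $H = P^a$ a prime power.

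For $H = P^a$, the divisors $D|H$ are $P^0, P^1, \dots, P^a$. By the H\"older formula \eqref{eq:thmHolder}, $\eta_k\!\left((P^{a-j})^k, P^a\right)$ depends only on $N^k = \frac{P^{ak}}{(P^{k(a-j)}, P^{ak})_k} = P^{jk}$, i.e. on $N = P^j$; explicitly it equals $\phi_k(P^a)\mu(P^j)\phi_k^{-1}(P^j)$, which is $\phi_k(P^a)$ for $j=0$, equals $-\phi_k(P^a)/\phi_k(P) = -|P|^{k(a-1)}$ for $j=1$, and is $0$ for $j\ge 2$ because $\mu(P^j)=0$. Hence the left-hand side collapses to just two terms: $\phi_k(P^a)\,\phi_{ks}(P^0) + \left(-|P|^{k(a-1)}\right)\phi_{ks}(P)$. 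Now $\phi_{ks}(P^0)=1$ and $\phi_{ks}(P) = |P|^{ks}-1$ by \eqref{eq:JordanGen}, and $\phi_k(P^a) = |P|^{ka} - |P|^{k(a-1)}$ by \eqref{eq:lemma25p2}. Plugging these in, the left-hand side becomes $\left(|P|^{ka}-|P|^{k(a-1)}\right) - |P|^{k(a-1)}\left(|P|^{ks}-1\right) = |P|^{ka} - |P|^{k(a-1)+ks}$. On the right, $|P^a|^{ks}\phi_{k(1-s)}(P^a) = |P|^{aks}\left(|P|^{ak(1-s)} - |P|^{(a-1)k(1-s)}\right) = |P|^{ak} - |P|^{aks + (a-1)k(1-s)}$, and the exponent $aks + (a-1)k - (a-1)ks = (a-1)k + ks = k(a-1) + ks$ matches, so both sides agree.

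The only mild obstacle I anticipate is the multiplicativity bookkeeping in the first step: one must be careful that $\eta_k\!\left((\frac{H}{D})^k, H\right)$ really is jointly multiplicative when $D$ and $H$ both vary through a coprime factorization, and that the correspondence $D|H \leftrightarrow (D_1|H_1, D_2|H_2)$ with $D=D_1D_2$ is a bijection respecting the summand's product structure. This is entirely routine given \eqref{eq:ramaPro2} and the product form of the H\"older formula, but it is the one place where a slip would break the argument; everything after the reduction to $H=P^a$ is just the short computation displayed above. Alternatively, if one prefers to avoid even this, one can argue directly from \eqref{eq:thm41} (or rather its $\phi$-weighted reformulation) by noting that \eqref{eq:cor45} is the $s=0$ special case and the general $s$ follows by replacing the weight $\phi_k(D)$ there with $\phi_{ks}(D)$ and tracking the same telescoping — but the prime-power computation is cleaner and self-contained, so that is the route I would write up.
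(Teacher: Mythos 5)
Your proof is correct, and it takes a genuinely different route from the paper. The paper's proof stays global: it first uses the H\"older formula to rewrite $\eta_k\bigl((\frac{H}{D})^k, H\bigr)$ as $\phi_k(H)\mu(D)\phi_k^{-1}(D)$ (since here $N=D$), then expands $\phi_{ks}(D)=\sum_{R|D}|R|^{ks}\mu(\frac{D}{R})$, interchanges the order of summation, and evaluates the resulting inner sum $\sum_{E|\frac{H}{R},\,(E,R)=1}|\mu(E)|/\phi_k(E)$ by the Euler-product lemma proved immediately before (the $\alpha_k(R,H)$ identity), arriving at $\sum_{R|H}|R|^{ks}\mu(R)\left|\frac{H}{R}\right|^k=|H|^{ks}\phi_{k(1-s)}(H)$. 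You instead reduce to $H=P^a$ by multiplicativity and check the two surviving terms by hand. Both arguments are sound; your prime-power computation is shorter and more self-contained (it does not need the $\alpha_k$ lemma), while the paper's convolution manipulation avoids the multiplicativity bookkeeping entirely. On that bookkeeping: the cleanest way to justify that the left-hand side is multiplicative in $H$ is exactly the first line of the paper's computation --- once you apply \eqref{eq:thmHolder} to get $\phi_k(H)\sum_{D|H}\mu(D)\phi_{ks}(D)/\phi_k(D)$, the sum is a divisor sum of a multiplicative function of $D$ alone, and the joint-multiplicativity check you flag as the ``one place where a slip would break the argument'' disappears. I would recommend writing it that way rather than invoking \eqref{eq:ramaPro2}. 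The rest of your computation (the values $\phi_k(P^a)$, $-|P|^{k(a-1)}$, $0$ for $j\geq 2$, and the exponent match $aks+(a-1)k(1-s)=k(a-1)+ks$) is correct as written.
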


\begin{proof}
  By \eqref{eq:thmHolder} and \eqref{eq:JordanGen}, we have
  \begin{equation}
    \label{eq:lemma46p1}
    \begin{split}
    \sum_{D|H}\eta_k\left((\frac{H}{D})^k, H\right)\phi_{ks}(D) &= \phi_k(H)\sum_{D|H}\frac{\mu(D)}{\phi_k(D)}\phi_{ks}(D)\\
&=\phi_k(H)\sum_{D|H}\frac{\mu(D)}{\phi_k(D)}\sum_{R|D}|R|^{ks}\mu(\frac{D}{R})\\
&=\phi_k(H)\sum_{D|H}\sum_{\substack{R\cdot E = D\\(R, E)=1}}\frac{\mu^2(E)|R|^{ks}\mu(R)}{\phi_k(R)\phi_k(E)}\\
&=\phi_k(H)\sum_{R|H}\frac{|R|^{ks}\mu(R)}{\phi_k(R)}\sum_{\substack{E|\frac{H}{R}\\(E, R)=1}}\frac{|\mu(E)|}{\phi_k(E)}\\
& =|H|^{ks}\phi_{k(1-s)}(H),
    \end{split}
  \end{equation}
and the lemma follows. 
\end{proof}

We may obtain more orthogonal relation formulas from \eqref{eq:lemma41} and \eqref{eq:thm41}. For example, if we take $G=0$ in \eqref{eq:lemma41}, and note that $\eta_k(-B, D_2) = \eta_k(B, D_2)$, if follows that
\begin{corollary}
  If $D_1|H$ and $D_2|H$, then
  \begin{equation}
    \label{eq:cor410}
    \sum_{A\moda H^k}\eta_k(A, ,D_1)\eta_k(A, D_2)=\left\{
        \begin{tabular}[l]{l}
          $|H|^k\phi_k(D)$, if $D_1=D_2=D$,\\
          $0$, otherwise.
        \end{tabular}
\right.
  \end{equation}
In particular, if $D|H$, then
\begin{equation}
  \label{eq:cor410-2}
  \sum_{A\moda H^k}\eta_k(A, D)^2 = |H|^k \phi_k(D). 
\end{equation}
\end{corollary}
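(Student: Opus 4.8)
The plan is to obtain this corollary as an immediate specialization of Lemma~\ref{lem:41}, exactly as the sentence preceding the statement suggests. First I would set $G = 0$ in \eqref{eq:lemma41}. The summation condition $A + B \equiv G~(\moda H^k)$ then becomes $B \equiv -A~(\moda H^k)$, so the double sum collapses to the single sum $\sum_{A\moda H^k}\eta_k(A, D_1)\eta_k(-A, D_2)$ over a complete residue system modulo $H^k$, and by \eqref{eq:lemma41} this equals $|H|^k\eta_k(0, D)$ when $D_1 = D_2 = D$ and $0$ otherwise.

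Next I would record the symmetry $\eta_k(-A, D_2) = \eta_k(A, D_2)$, which is the only point requiring an argument. The cleanest route is the H\"older formula \eqref{eq:thmHolder}: $\eta_k(G, H)$ depends on $G$ only through the $k$-th power g.c.d.\ $(G, H^k)_k$, and since $-1 \in \fnum_q^*$ one has $(-A, D_2^k)_k = (A, D_2^k)_k$, whence the claim. (Alternatively, since $D$ and $-D$ run together over a $k$-reduced residue system modulo $D_2$, and $E(-A, D_2^k)(D) = \overline{E(A, D_2^k)(D)}$, the sum $\eta_k(A, D_2)$ is in fact real, so $\eta_k(-A, D_2) = \overline{\eta_k(A, D_2)} = \eta_k(A, D_2)$.) Substituting this identity into the sum above turns the left-hand side into precisely $\sum_{A\moda H^k}\eta_k(A, D_1)\eta_k(A, D_2)$.

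Finally I would read off the right-hand side: when $D_1 \neq D_2$ it is $0$, and when $D_1 = D_2 = D$ it is $|H|^k\eta_k(0, D)$, where $\eta_k(0, D) = \phi_k(D)$ by \eqref{eq:holderFollow1} (the hypothesis $D^k \mid G$ holds trivially for $G = 0$). This gives \eqref{eq:cor410}, and \eqref{eq:cor410-2} is just the case $D_1 = D_2 = D$. I do not anticipate any real obstacle here; the entire content is the reduction via $G = 0$ together with the one-line symmetry $\eta_k(-A, D_2) = \eta_k(A, D_2)$, both of which rest on results already established above.
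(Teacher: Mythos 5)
Your proposal is correct and follows the paper's own route exactly: the paper likewise obtains \eqref{eq:cor410} by setting $G=0$ in Lemma~\ref{lem:41} and invoking the symmetry $\eta_k(-B, D_2) = \eta_k(B, D_2)$, together with $\eta_k(0,D)=\phi_k(D)$. Your justification of the symmetry via the H\"older formula (dependence of $\eta_k(G,H)$ on $G$ only through $(G,H^k)_k$) is a sound way to fill in the one step the paper merely asserts.
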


\section{The series $\delta_k(s, G)$}
\label{sec:series}

In this section, we prove Theorem~\ref{thm:11}. Recalling the Dirichlet series $\delta_k(s, G)$ is given by (see \eqref{eq:Diri})
\begin{equation}
  \label{eq:DiriDef}
  \delta_k(s, G) = \sum_{H\in \mathbb{A}}\frac{\eta_k(G, H)}{|H|^s}.
\end{equation}

\begin{lemma}\label{lem:51}
  If $\re(s)>k+1$, $H^k\nmid G$, then we have
  \begin{equation}
    \label{eq:lemma51}
    \delta_k(s, G) = (1-q^{1-s})\sum_{D^k|G}|D|^{k-s}.
  \end{equation}
\end{lemma}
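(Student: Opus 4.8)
The plan is to start from the Kluyer--type formula \eqref{eq:kluyer}, which expresses $\eta_k(G,H) = \sum_{D\mid H,\ D^k\mid G}|D|^k\mu(H/D)$, and substitute it into the definition \eqref{eq:DiriDef}. For $\re(s)>k+1$ the double series converges absolutely, so I may freely interchange the order of summation. Writing $H = DE$ where $D^k\mid G$ and $E$ runs over all monic polynomials, the sum factors as
\begin{equation*}
  \delta_k(s,G) = \sum_{\substack{D\in\mathbb{A}\\ D^k\mid G}} |D|^{k}|D|^{-s}\sum_{E\in\mathbb{A}}\frac{\mu(E)}{|E|^{s}} = \left(\sum_{D^k\mid G}|D|^{k-s}\right)\sum_{E\in\mathbb{A}}\frac{\mu(E)}{|E|^{s}}.
\end{equation*}
The first factor is a finite sum (only finitely many $D$ satisfy $D^k\mid G$), so the convergence issue is entirely in the second factor.

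Next I would identify the inner $\mu$-sum. By the Euler product \eqref{eq:eulerProd} for $\zeta_{\mathbb{A}}$ together with multiplicativity of $\mu$, one has $\sum_{E\in\mathbb{A}}\mu(E)|E|^{-s} = \zeta_{\mathbb{A}}^{-1}(s)$ for $\re(s)>1$, and by \eqref{eq:zeta2} this equals $1-q^{1-s}$. Hence for $\re(s)>k+1$,
\begin{equation*}
  \delta_k(s,G) = (1-q^{1-s})\sum_{D^k\mid G}|D|^{k-s},
\end{equation*}
which is exactly \eqref{eq:lemma51}. The hypothesis $H^k\nmid G$ is not actually needed for this identity in the stated half-plane — it matters for the claim in Theorem~\ref{thm:11} that $\delta_k$ is entire (if $H^k\mid G$ the series instead picks up the pole of $\zeta_{\mathbb{A}}(s-k)$, cf. \eqref{eq:thm11-5}), so I would remark that under $H^k\nmid G$ the right-hand side is a finite linear combination of powers $q^{-ns}$ and thus entire, giving analytic continuation to all of $\mathbb{C}$; but the computational core of the lemma is just the factorization above.

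I do not expect a serious obstacle here: the only technical point is justifying the interchange of summations, which follows from absolute convergence of $\sum_{D^k\mid G}|D|^{k}\sum_{E}|E|^{-\re(s)}$ when $\re(s)>1$ (and the $D$-sum is finite regardless). The one thing to be careful about is bookkeeping the substitution $H=DE$: the map $(D,E)\mapsto DE$ is a bijection from pairs of monic polynomials to monic polynomials, and for fixed $D$ with $D^k\mid G$ every multiple $H=DE$ of $D$ arises once, with $\mu(H/D)=\mu(E)$; no overcounting occurs because $D$ here is not constrained to be, say, the largest such divisor. So the proof reduces to: (i) plug in \eqref{eq:kluyer}; (ii) reindex; (iii) evaluate the $\mu$-sum via \eqref{eq:zeta2} and the Euler product. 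This is a short argument.
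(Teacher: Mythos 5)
Your proof is correct and follows essentially the same route as the paper: substitute the Kluyer formula \eqref{eq:kluyer} into \eqref{eq:DiriDef}, reindex $H=DE$ to factor out $\sum_{E\in\mathbb{A}}\mu(E)|E|^{-s}=\zeta_{\mathbb{A}}^{-1}(s)=1-q^{1-s}$, and conclude. Your added remarks (explicit justification of the interchange of summation, and the observation that the hypothesis $H^k\nmid G$, i.e.\ $G\neq 0$, is needed only to make the $D$-sum finite and hence $\delta_k$ entire) are correct and harmless refinements of the paper's argument.
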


\begin{proof}
  By \eqref{eq:kluyer}, we have
  \begin{equation}
    \label{eq:lemma51p1}
    \begin{split}
      \delta_k(s, G)&= \sum_{H\in \mathbb{A}}\frac{1}{|H|^s}\sum_{D|H, D^k|G}|D|^k \mu(\frac{H}{D})\\
&= \sum_{D^k|G}|D|^k\sum_{\substack{H\in \mathbb{A}\\D|H}}\frac{\mu(\frac{H}{D})}{|H|^s}\\
&= \sum_{D^k|G}|D|^{k-s}\sum_{H \in \mathbb{A}}\frac{\mu(H)}{|H|^s}.
    \end{split}
  \end{equation}
If $\re(s)>1$, we have (see \cite[Proposition 2.6]{ref:43})
\begin{equation}
  \label{eq:lemma51p2}
  \sum_{H\in \mathbb{A}} \frac{\mu(H)}{|H|^s} = \zeta_{\mathbb{A}}^{-1}(s) = (1-q^{1-s}),
\end{equation}
and the lemma follow at once. 
\end{proof}

{\bf Proof of Theorem \ref{thm:11}}. We first show that $\delta_k(s, G)$ is an entire function on the whole complex plane. For any $n\geq 0$, we let
\begin{equation}
  \label{eq:thm11p1}
  A(n) = \sum_{\substack{H\in \mathbb{A}\\\deg(H)=n}}\eta_k(G, H). 
\end{equation}
By definition \eqref{eq:Diri}, we have
\begin{equation}
  \label{eq:thm11p2}
  \delta_k(s, G)=\sum_{n=0}^{+\infty}A(n)q^{-ns}=\sum_{n=0}^{+\infty}A(n)u^n,
\end{equation}
where $u=q^{-s}$. On the other hands, if $G\neq 0$, and $\re(s)>k+1$, by Lemma \ref{lem:51},
\begin{equation}
  \label{eq:thm11p3}
  \begin{split}
      \delta_k(s, G) &= (1-q^{1-s})\sum_{D^k|G}q^{dk}q^{-ds} \\
&= (1-qu)\sum_{D^k|G}q^{dk}u^d\\
&= \sum_{D^k|G}q^{dk}u^d - \sum_{D^k|G}q^{dk+1}u^{d+1},
  \end{split}
\end{equation}
where $d=\deg(D)$. We set
\begin{equation}
  \label{eq:thm11p4}
  \gamma(G, n) = \sum_{\substack{D^k|G\\\deg(D)=n}}1. 
\end{equation}
By \eqref{eq:thm11p3}, we have
\begin{equation}
  \label{eq:thm11p5}
  \begin{split}
      \delta_k(s, G) &= \sum_{n=0}^{+\infty}q^{nk}\gamma(G, n)u^n - \sum_{n=0}^{+\infty}q^{nk+1}\gamma(G, n)u^{n+1}\\
&=\sum_{n=0}^{+\infty}q^{nk}\gamma(G, n)u^n - \sum_{n=1}^{+\infty}q^{(n-1)k+1}\gamma(G, n-1)u^{n}\\
&=\sum_{n=0}^{+\infty}q^{nk}\left(\gamma(G, n) -q^{1-k}\gamma(G, n-1)\right)u^n. 
  \end{split}
\end{equation}
Comparing the coefficients of $u^n$ of \eqref{eq:thm11p2} and \eqref{eq:thm11p5}, we have
\begin{equation}
  \label{eq:thm11p6}
  A(n) = q^{nk}\left(\gamma(G, n)-q^{1-k}\gamma(G, n-1) \right).
\end{equation}
By the definition of $\gamma(G, n)$, if $n-1 > \frac{\deg(G)}{k}$, it is easy to see that $\gamma(G, n) = \gamma(G, n-1) = 0$, and it follows that
\begin{equation}
  \label{eq:thm11p7}
  A(n)=0, \mbox{ whenever }n-1 > \frac{\deg(G)}{k}. 
\end{equation}
We note that the definition of $A(n)$ is independent on the choice of $s$, thus for any $s$, we have
\begin{equation}
  \label{eq:thm11p8}
  \delta_k(s, G) = \sum_{0\leq n \leq \frac{\deg(G)}{2}+1}A(n)u^n = \sum_{\substack{H\in \mathbb{A}\\\deg(H)\leq \frac{\deg(G)}{2}+1}}\frac{\eta_k(G, H)}{|H|^s}, 
\end{equation}
which indicates that $\delta_k(s, G)$ is, indeed, a finite summand, therefore, $\delta_k(s, G)$ is an entire function, and on the whole complex plane we have
\begin{equation}
  \label{eq:thm11p9}
  \delta_k(s, G) = (1-q^{1-s})\sum_{D^k|G}|D|^{k-s}. 
\end{equation}
In particular, if $s=1$, then we have
\begin{equation*}
  \delta_k(1, G) =\sum_{H\in \mathbb{A}}\frac{\eta_k(G, H)}{|H|} =0.
\end{equation*}

To complete the proof of Theorem \ref{thm:11}, next we show that square mean values estimate \eqref{eq:thm11-4}. If $c$ and $T$ are two given real numbers and $T>0$, by \eqref{eq:thm11p9}, we have
\begin{equation}
  \label{eq:thm11p11}
    \int_{-T}^{T}|\delta_k(c+it, G)|^2 dt = \sum_{D_1^k|G, D_2^k|G}q^{(d_1+d_2)(k-c)}\int_{-T}^{T}(1-q^{1-c-it})(1-q^{1-c+it})q^{(d_2-d_1)it} dt,
\end{equation}
where $d_1 = \deg(D_1)$ and $d_2 = \deg(D_2)$. We denote the inner integral of \eqref{eq:thm11p11} by $S(d_1, d_2)$, 
\begin{equation}
  \label{eq:thm11p12}
  S(d_1, d_2) = \int_{-T}^T (1-q^{1-c-it})(1-q^{1-c+it})q^{(d_2-d_1)it} dt.
\end{equation}
Making substitution $u=q^{it}$, it follows that
\begin{equation}
  \label{eq:thm11p13}
  S(d_1, d_2) = \frac{1}{i\log q}\int_{q^{-iT}}^{q^{iT}} (1+q^{2(1-c)}-q^{1-c}u^{-1} - q^{1-c}u)u^{d_2-d_1-1}du. 
\end{equation}
Let $n=d_2-d_1-1$, if $n\neq -2, -1, 0$, then
\begin{equation*}
  \label{eq:thm11p14}
  \begin{split}
    S(d_1, d_2) =& \frac{2}{\log q}\left[\frac{1+q^{2(1-c)}}{n+1} (q^{i(n+1)T}-q^{-i(n+1)T})\right.\\
   &- \left.\frac{q^{1-c}}{n+2}(q^{i(n+2)T} - q^{-i(n+2)T}) - \frac{q^{1-c}}{n}(q^{inT} - q^{-inT})\right],
  \end{split}
\end{equation*}
which yields the following estimate
\begin{equation}
  \label{eq:thm11p15}
  |S(d_1, d_2)| \leq \frac{4}{\log q}(\frac{1+q^{2(1-c)}}{n+1} + \frac{q^{1-c}}{n+2} + \frac{q^{1-c}}{n}). 
\end{equation}

If $n=d_2-d_1-1=0$, then $d_2 = 1+d_1$, by \eqref{eq:thm11p13}, we have
\begin{equation}
  \label{eq:thm11p16}
  \begin{split}
  S(d_1, d_2) &= \frac{1}{i\log q}\left[(1+q^{2(1-c)})(q^{iT}-q^{-iT})-2iTq^{1-c}\log q - \frac{q^{1-c}}{2}(q^{2iT}-q^{-2iT})  \right]\\
&= -2Tq^{1-c}+O(1).
  \end{split}
\end{equation}

If $n=d_2-d_1-1=-1$, then $d_2=d_1$, and
\begin{equation}
  \label{eq:thm11p17}
  \begin{split}
  S(d_1, d_2) &= \frac{1}{i\log q}\left[2iT(1+q^{2(1-c)})\log q - 2q^{1-c}(q^{iT} -q^{-iT})\right]\\
&= -2T(1+q^{2(1-c)})+O(1).  
  \end{split}
\end{equation}

If $n=d_2-d_1-1=-2$, we also have
\begin{equation}
  \label{eq:thm11p18}
S(d_1, d_2) = -2Tq^{1-c}+O(1).
\end{equation}

Putting the above equalities together, by \eqref{eq:thm11p11} we have
\begin{equation*}
  \begin{split}
    \frac{1}{2T}\int_{-T}^{T}|\delta_k(c+it, G)|^2 dt &= (1+q^{2(1-c)})\sum_{D^k|G}q^{2d(k-1)} 2q^{1-c} \sum_{\substack{D_1^k|G, D_2^k|G\\d_1=d_2+1}}q^{(d_1+d_2)(k-c)}+O(\frac{1}{T})\\
&= (1+q^{2(1-c)})\sum_{D^k|G}|D|^{2(k-c)} - 2q^{1-k} \sum_{\substack{D_1^k|G, D_2^k|G\\d_1=d_2+1}}|D_1|^{2(k-c)}+O(\frac{1}{T}).
  \end{split}
\end{equation*}

Let 
\begin{equation}
  \label{eq:thm11app}
  \sigma_0(x, G) = \sum_{D^k|G}|D|^x, \mbox{ and } \sigma_1(x, G) = \sum_{\substack{D_1^k|G, D_2^k|G\\d_1=d_2+1}}|D_1|^x, 
\end{equation}
then we finally obtain
\begin{equation*}
    \frac{1}{2T}\int_{-T}^{T}|\delta_k(c+it, G)|^2 dt = (1+q^{2(1-c)})\sigma_0(2(k-c)) - 2q^{1-k}\sigma_1(2(k-c))+O(\frac{1}{T}).
\end{equation*}
We complete the proof of Theorem \ref{thm:11}. 

\section{The series $\tau_k(s, H)$}
\label{sec:tau}

In this section we prove Theorem~\ref{thm:12}. Recalling the Dirichlet series $\tau_k(s, H)$ is given by (see \eqref{eq:Diri2}) that
\begin{equation}
  \label{eq:tauDef}
  \tau_k(s, H)  = \sum_{G \in \mathbb{A}}\frac{\eta_k(G, H)}{|G|^s}.
\end{equation}
This series is more complicated than $\delta_k(s, G)$, because that $\eta_k(G, H)$ is not multiplicative in $G$, thus we cannot make use of Euler product directly. Our treatment begins with the following auxiliary series
\begin{equation}
  \label{eq:tauAux}
  \tau_k'(s, H) = \sum_{G\in \mathbb{A}}\frac{\chi(G)}{|G|^s},
\end{equation}
where $\chi(G) = 1$, if $(G, H^k)_k = 1$, and $\chi(G)=0$, if $(G, H^k)_k>1$. Since $\chi(G)$ is a multiplicative function in $G$, so we have the following equality by using Euler product that (if $\re(s)>1$)
\begin{equation}
  \label{eq:tauAuxEul}
  \begin{split}
  \tau_k'(s, H) &= \prod_{D\in \mathbb{A}}(1-|P|^{-s})^{-1} \prod_{P|H}(1-|P|^{-ks})\\  
   &= \zeta_{\mathbb{A}}(s)\phi_{ks}(H)|H|^{-ks}. 
  \end{split}  
\end{equation}

\begin{lemma}\label{lem:61}
  If $H$ is positive polynomial and $\re(s)>k+1$, then
  \begin{equation}
    \label{eq:lemma61}
    \tau_k(s, H) = (1-q^{1-s})^{-1}\phi_{k(1-s)}(H). 
  \end{equation}
\end{lemma}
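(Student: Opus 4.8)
The plan is to split the sum defining $\tau_k(s,H)=\sum_{G\in\mathbb{A}}\eta_k(G,H)|G|^{-s}$ according to the $k$-th power common divisor of $G$ with $H^k$, and to reduce each piece to the auxiliary series $\tau_k'$ of \eqref{eq:tauAux}. First, for monic $G$ I would write $(G,H^k)_k=A^k$ with $A$ monic; then $A^k\mid H^k$ forces $A\mid H$, and $A^k\mid G$ lets me write $G=A^kG_1$ with $G_1\in\mathbb{A}$. Comparing the exponent of each irreducible $P$ on both sides gives $(A^kG_1,H^k)_k=A^k\,(G_1,(H/A)^k)_k$, so the $G$ with $(G,H^k)_k=A^k$ are precisely the $A^kG_1$ with $G_1\in\mathbb{A}$ and $(G_1,(H/A)^k)_k=1$. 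On this fiber the H\"older formula \eqref{eq:thmHolder} gives the constant value $\eta_k(G,H)=\phi_k(H)\mu(H/A)\phi_k^{-1}(H/A)$, since $N^k=H^k/(G,H^k)_k=(H/A)^k$ forces $N=H/A$.

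Next I would justify the regrouping: since $\re(s)>k+1>1$, the bound $|\eta_k(G,H)|\le\phi_k(H)$ (from $|\mu(N)|\le1$ and $\phi_k(N)\ge\prod_{P\mid N}(|P|^k-1)\ge1$) together with convergence of $\sum_{G\in\mathbb{A}}|G|^{-\re(s)}$ for $\re(s)>1$ makes the series absolutely convergent. Carrying out the regrouping, using $|A^kG_1|^{-s}=|A|^{-ks}|G_1|^{-s}$ and identifying $\sum_{(G_1,(H/A)^k)_k=1}|G_1|^{-s}=\tau_k'(s,H/A)=\zeta_{\mathbb{A}}(s)\phi_{ks}(H/A)|H/A|^{-ks}$ via \eqref{eq:tauAuxEul}, then setting $B=H/A$ and using $|A|^{ks}|B|^{ks}=|H|^{ks}$, I obtain
\begin{equation*}
  \tau_k(s,H)=\frac{\phi_k(H)\,\zeta_{\mathbb{A}}(s)}{|H|^{ks}}\sum_{B\mid H}\frac{\mu(B)\,\phi_{ks}(B)}{\phi_k(B)}.
\end{equation*}

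To finish, the divisor sum is multiplicative in $B$, so it equals $\prod_{P\mid H}\bigl(1-\phi_{ks}(P)\phi_k^{-1}(P)\bigr)=\prod_{P\mid H}(|P|^k-|P|^{ks})(|P|^k-1)^{-1}$ using $\phi_z(P)=|P|^z-1$ for irreducible $P$; multiplying by $\phi_k(H)=|H|^k\prod_{P\mid H}(1-|P|^{-k})$ telescopes this to $|H|^k\prod_{P\mid H}(1-|P|^{k(s-1)})=|H|^{ks}\phi_{k(1-s)}(H)$ by the product formula \eqref{eq:JordanGen}, whence $\tau_k(s,H)=\zeta_{\mathbb{A}}(s)\phi_{k(1-s)}(H)=(1-q^{1-s})^{-1}\phi_{k(1-s)}(H)$ by \eqref{eq:zeta2}. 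I expect the only genuinely delicate point to be the valuation bookkeeping that pins down the fibers $(G,H^k)_k=A^k$ and the identity $(A^kG_1,H^k)_k=A^k(G_1,(H/A)^k)_k$ correctly (and with it the clean reduction to $\tau_k'$); the Euler-product telescoping and the absolute-convergence check are routine.
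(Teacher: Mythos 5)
Your proposal is correct and follows essentially the same route as the paper: fibering the sum over the value $(G,H^k)_k=A^k$ with $A\mid H$, reducing each fiber to the auxiliary series $\tau_k'(s,H/A)$ via \eqref{eq:tauAuxEul}, and evaluating the resulting divisor sum. The only difference is that you evaluate $\phi_k(H)\sum_{B\mid H}\mu(B)\phi_{ks}(B)\phi_k^{-1}(B)=|H|^{ks}\phi_{k(1-s)}(H)$ directly by an Euler product, whereas the paper invokes its Lemma~\ref{lemma:46}, whose content is exactly this identity; your explicit checks of the fiber bijection and of absolute convergence merely fill in details the paper leaves implicit.
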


\begin{proof}
  Let $A^k = (G, H^k)_k$. By \eqref{eq:thmReciKImp2}, we have $\eta_k(G, H) = \eta_k(A^k, H)$, it follows that
  \begin{equation*}
    \label{eq:lemma61}
    \begin{split}
    \tau_k(s, H) &= \sum_{A|H}\eta_k(A^k, H) \sum_{\substack{G\in \mathbb{A}\\A^k = (G, H^k)_k}}\frac{1}{|G|^s}  \\
&= \sum_{A|H}\frac{\eta_k(A^k, H)}{|A|^{sk}} \sum_{\substack{G_1\in \mathbb{A}\\(G_1, (\frac{H}{A})^k)_k=1}}\frac{1}{|G_1|^s}  \\
&= \zeta_{\mathbb{A}}(s)|H|^{-ks}\sum_{A|H}\eta_k(A^k, H)\phi_{ks}(\frac{H}{A}).
    \end{split}
  \end{equation*}
By Lemma~\ref{lemma:46}, we have
\begin{equation*}
  \label{eq:lemma61p}
  \tau_k(s, H) = \zeta_{\mathbb{A}}(s) \phi_{k(1-s)}(H) = (1-q^{1-s})^{-1}\phi_{k(1-s)}(H). 
\end{equation*}
We complete the proof of Lemma~\ref{lem:61}.
\end{proof}

{\bf Proof of Theorem~\ref{thm:12}}. We now return to the proof of Theorem~\ref{thm:12}. For any integer $n\geq 0$, we let
\begin{equation}
  \label{eq:thm12p1}
  B(n) = \sum_{\substack{G\in \mathbb{A}\\\deg(G)=n}} \eta_k(G, H). 
\end{equation}
By definition \eqref{eq:Diri2}, we have
\begin{equation}
  \label{eq:thm12p2}
  \tau_k(s, H) = \sum_{n=0}^{+\infty}B(n)q^{-ns} = \sum_{n=0}^{+\infty}B(n)u^n,
\end{equation}
where $u=q^{-s}$. If $\re(s)>k+1$, by Lemma~\ref{lem:61}
\begin{equation}
  \label{eq:thm12p3}
  \begin{split}
    \tau_k(s, H) &= (1-q^{1-s})^{-1}\phi_{k(1-s)}(H) = (1-q^{1-s})^{-1}\sum_{D|H}|D|^{k(1-s)}\mu(\frac{H}{D})\\
&= (1-qu)^{-1}\sum_{D|H}\mu(\frac{H}{D})q^{dk}u^{dk},
  \end{split}
\end{equation}
where $u=q^{-s}$ and $d=\deg(D)$. Because of $|qu| < 1$, then $(1-qu)^{-1}$ has a geometric series expression, we have
\begin{equation}
  \label{eq:thm12p4}
  \begin{split}
    \tau_k(s, H) &= \sum_{D|H}\mu(\frac{H}{D})\sum_{n=0}^{+\infty}q^{n+dk}u^{n+dk}\\
&= \sum_{D|H}\mu(\frac{H}{D}) \sum_{n=dk}^{+\infty}q^{n}u^{n}.
  \end{split}
\end{equation}

We set
\begin{equation}
  \label{eq:thm12p5}
  J_k(D,n)=\left\{
    \begin{tabular}[l]{l}
      $1$, if $n \geq k \deg(D)$,\\
      $0$, if $n < k \deg(D)$.
    \end{tabular}
\right.
\end{equation}
It follows from \eqref{eq:thm12p4} that
\begin{equation}
  \label{eq:thm12p5}
  \tau_k(s, H) = \sum_{n=0}^{+\infty}\left(\sum_{D|H}\mu(\frac{H}{D})J_k(D, n)  \right) q^n u^n.
\end{equation}
Comparing coefficients of $u^n$ of \eqref{eq:thm12p2} and \eqref{eq:thm12p5}, we have
\begin{equation}
  \label{eq:thm12p6}
  B(n) = \sum_{D|H}\mu(\frac{H}{D})J_k(D, n)\cdot q^n.
\end{equation}

If $n\geq k \log(H)$, then $J_k(D, n)=1$ for all of $D$ that $D|H$, hence
\begin{equation}
  \label{eq:thm12p7}
  B(n) = q^n \sum_{D|H}\mu(\frac{H}{D}) = 0.
\end{equation}
The last equality of \eqref{eq:thm12p7} follows from \eqref{eq:mobiusIden1} and $\deg(H)\geq 1$. We note that $B(n)$ is independent on the choice of complex number $s$ by the definition of $B(n)$, therefore, for any complex number $s$ we have $B(n)=0$, whenever $n\geq k\cdot \deg(H)$. It follows from \eqref{eq:thm12p2} that
\begin{equation}
  \label{eq:thm12p8}
  \tau_k(s, H) = \sum_{n \leq k\deg(H)}B(n)u^n = \sum_{\substack{G\in \mathbb{A}\\\deg(G)\leq k \cdot \deg(H)}}\frac{\eta_k(G, H)}{|G|^s},
\end{equation}
which indicates $\tau_k(s, H)$ is, indeed, a finite summand, thus $\tau_k(s, H)$ is an entire function on the whole complex plane. Moreover, by the continued principle, on the whole complex plane we have
\begin{equation}
  \label{eq:thm12p9}
  \tau_k(s, H) = (1-q^{1-s})^{-1}\phi_{k(1-s)}(H). 
\end{equation}
In particular, $s=1$ is a vanished pole of $\tau_k(s, H)$, and we have
\begin{equation}
  \label{eq:thm12p10}
  \begin{split}
    \tau_k(1, H) &= \lim_{s\rightarrow 1}(1-q^{1-s})^{-1}\phi_{k(1-s)}(H)\\
&= -\frac{k\sum_{D|H}\log |D| \mu(\frac{H}{D})}{\log q} = \frac{-k\Lambda(H)}{\log q}, 
  \end{split}
\end{equation}
which is the equality \eqref{eq:thm12-3}, and is an analogue of Ramanujan's identity \eqref{eq:famousIden}.

In order to complete the proof of Theorem \ref{thm:12}, it remains to prove the square mean value estimate. If $T$ and $c$ are any real numbers that $T>0$, $c\neq 1$, by \eqref{eq:thm12p9} we have

\begin{equation}
  \label{eq:thm12p11}
  \int_{-T}^T |\tau_k(c+it, H)|^2 dt = \sum_{D_1|H, D_2|H} \mu(\frac{H}{D_1})\mu(\frac{H}{D_2})|D_1D_2|^{1-c} \int_{-T}^T \frac{q^{(d_2-d_1)it} dt}{(1-q^{1-c-it})(1-q^{1-c+it})}.
\end{equation}

We denote 
\begin{equation}
  \label{eq:thm12p12}
  S_1(d_1, d_2) = \int_{-T}^T \frac{q^{(d_2-d_1)it} dt}{(1-q^{1-c-it})(1-q^{1-c+it})},
\end{equation}
and make the substitution of $u=-t$, it follows that
\begin{equation}
  \label{eq:thm12p13}
  S_1(d_1, d_2) = \int_{-T}^T \frac{q^{(d_1-d_2)it} dt}{(1-q^{1-c-it})(1-q^{1-c+it})},
\end{equation}
which shows that $d_1$ and $d_2$ are symmetric in \eqref{eq:thm12p11}. Therefore we may suppose that $d_2 \geq d_1$, and make the substitution $u=q^{it}$ in \eqref{eq:thm12p12}, then
\begin{equation}
  \label{eq:thm12p13}
   S_1(d_1, d_2) =\frac{i}{q^{1-c}\log q} \int_{q^{-iT}}^{q^{iT}} \frac{u^{k(d_2-d_1)}du}{(u-q^{1-c})(u-q^{c-1})}.
\end{equation}
If $d_1 = d_2$, then we have
\begin{equation}
  \label{eq:thm12p14}
   S_1(d_1, d_2) =\frac{i}{(q^{2(1-c)}-1)\log q} [\log (q^{iT}-q^{1-c}) - \log(q^{-iT}-q^{1-c}) - \log(q^{iT}-q^{c-1}) + \log(q^{-iT}-q^{c-1})].
\end{equation}
We note that $1-c$ and $c-1$ are symmetric in the above, first we let $c>1$, then
\begin{equation}
  \label{eq:thm12p15}
  \log(q^{iT}-q^{1-c}) - \log(q^{-iT}-q^{1-c}) = 2iT\log q + \log(1-q^{-iT}q^{1-c}) - \log(1-q^{iT}q^{1-c}). 
\end{equation}
Let $z=q^{1-c}q^{\pm iT}$, then $|z| =q^{1-c}<1$ for $c>1$, and we have the following power series expansion that
\begin{equation*}
  -\log(1-z) = z+\frac{1}{2}z^2 + \frac{1}{3}z^3 + \cdots.
\end{equation*}
It follows that
\begin{equation}
  \label{eq:thm12p16}
  |\log(1-z)| \leq |z|+ \frac{1}{2}|z|^2 + \cdots \leq \log\frac{1}{1-|z|} = \log\frac{1}{1-q^{1-c}},
\end{equation}
and by \eqref{eq:thm12p15} we have
\begin{equation}
  \label{eq:thm12p17}
  \log(q^{iT}-q^{1-c}) - \log(q^{-iT}-q^{1-c}) = 2iT\log q+O(1). 
\end{equation}
The remaining part of \eqref{eq:thm12p14} is 
\begin{equation*}
  \begin{split}
    \log(q^{iT}-q^{c-1}) - \log(q^{iT}-q^{c-1}) &= \log \frac{1-q^{1-c}q^{iT}}{1-q^{1-c}q^{-iT}}\\
&= \log(1-q^{1-c}q^{iT}) - \log(1-q^{1-c}q^{-iT}).
  \end{split}
\end{equation*}
By \eqref{eq:thm12p16} we have
\begin{equation}
  \label{eq:thm12p18}
  |\log(1-q^{1-c}q^{iT}) - \log(1-q^{1-c}q^{-iT})| \leq 2\log\frac{1}{1-q^{1-c}}. 
\end{equation}
Hence, if $d_2=d_1$, and $c>1$, we obtain
\begin{equation}
  \label{eq:thm12p19}
  S_1(d_1, d_2) = \frac{2T}{1-q^{2(1-c)}} + O(1). 
\end{equation}
If $d_1=d_2$, and $c<1$, the same method yields the following estimate
\begin{equation}
  \label{eq:thm12p20}
  S_1(d_1, d_2) = \frac{2T}{q^{2(1-c)}-1}+O(1).
\end{equation}
Therefore, if $d_1=d_2$, we have
\begin{equation}
  \label{eq:thm12p21}
  S_1(d_1, d_2) = \frac{2T}{|1-q^{2(1-c)}|}+O(1).
\end{equation}
Next, we consider $d_2>d_1$, and let $n=(d_2-d_1)k$. By \eqref{eq:thm12p13}, then
\begin{equation}
  \label{eq:thm12p22}
  S_1(d_1, d_2) = \frac{i}{(q^{2(1-c)}-1)\log q}\left[\int_{q^{-iT}-q^{1-c}}^{q^{iT}-q^{1-c}}\frac{(u+q^{1-c})^n}{u}du - \int_{q^{-iT}-q^{c-1}}^{q^{iT}-q^{c-1}}\frac{(u+q^{c-1})^n}{u}du   \right].
\end{equation}
We write
\begin{equation}
  \label{eq:thm12p23}
  (u+q^{\pm(1-c)})^n = \sum_{j=0}^n\binom{n}{j}u^j q^{\pm(1-c)(n-j)}.
\end{equation}
If $j\neq 0$, then it is easy to verify that the inner integral in \eqref{eq:thm12p22} is $O(1)$. if $j=0$, there is a similar argument like the case of $d_1=d_2$, which yields
\begin{equation}
  \label{eq:thm12p24}
  S_1(d_1, d_2) = \frac{q^{k(d_2-d_1)(1-c)}}{|1-q^{2(1-c)}|}2T + O(1).
\end{equation}
By \eqref{eq:thm12p11}, we finally obtain

\begin{equation}
  \label{eq:thm12p25}
  \begin{split}
    \frac{1}{2T}\int_{-T}^T |\tau_k(c+it, H)|^2 dt =& \frac{1}{|1-q^{2(1-c)}|}\sum_{D|H}\mu^2(\frac{H}{D})|D|^{2(1-c)}\\
&+ \frac{2}{|1-q^{2(1-c)}|}\sum_{\substack{D_1|H, D_2|H\\ \deg(D_1)> \deg(D_2)}} \left(\mu(\frac{H}{D_1})\mu(\frac{H}{D_2})\right.\\
&\left.|D_1|^{(k+2)(1-c)}|D_2|^{(2-k)(1-c)}\right) +O(\frac{1}{T}). 
  \end{split}
\end{equation}
We complete the proof of Theorem \ref{thm:12}. 

\section{Davenport--Hasse type formula}
\label{sec:daven}

The polynomial Ramanujan sum $\eta(G, H)$ essentially is a special Gauss sums on $\fnum_q[x]$. Let $\chi$ be a multiplicative character modulo $H$ on $\fnum_q[x]$, and $\psi_G= E(G, H)$ be the additive character modulo $H$ given by \eqref{eq:EGH}, the Gauss susm $G(\chi, \psi_G)$ modulo $H$ on $\fnum_q[x]$ is defined by
\begin{equation}
  \label{eq:gaussSum}
  G(\chi, \psi_G) = \sum_{D\moda H}\chi(D)\psi_G(D), 
\end{equation}
where $D$ extends over a complete residue system modulo $H$ in $\fnum_q[x]$. Let $\chi_0$ be the principal multiplicative character, it is $\chi_0(D)=1$ if $(D, H)=1$, and $\chi_0(D)=0$ if $(D, H)>1$, then we see that $\eta(G, H)=G(\chi_0, \psi_G)$. 

In an upcoming paper \cite{ref:49}, we presented an analogue of Davenport--Hasse's theorem for the polynomial Gauss sums (see \cite[Theorem~1.3]{ref:49}). To state this result, let $\fnum_{q^n}/\fnum_q$ be a finite extension over $\fnum_q$ of degree $n$, $\tr(a)$ and $N(a)$ be the trace map and norm from $\fnum_{q^n}$ to $\fnum_q$ respectively, 
\begin{equation}
  \label{eq:trN}
  \tr(a) = \sum_{i=1}^n \sigma^i(a), \mbox{ and } \norm(a) = \prod_{i=1}^n \sigma^i(a),
\end{equation}
where $\sigma(a) = a^q$ for $a$ in $\fnum_{q^n}$, is the Frobinus of $\fnum_{q^n}$. If $A$ is a polynomial in $\fnum_{q^n}[x]$, $A=a_kx^k + a_{k-1}x^{k-1} + \cdots + a_1x + a_0$, the trace map and norm can be extended to $\fnum_{q^n}[x]$ by
\begin{equation}
  \label{eq:trNex}
    \tr(A) = \sum_{i=1}^n \sigma^i(A), \mbox{ and } \norm(A) = \prod_{i=1}^n \sigma^i(A),
\end{equation}
where $\sigma(A) = \sum_{i=0}^k \sigma(a_i)x^i$. 

For a polynomial $H$ in $\fnum_q[x]$ and, therefore, also a polynomial in $\fnum_{q^n}[x]$. To define a Gauss sum modulo $H$ on $\fnum_{q^n}[x]$, for any $A$ in $\fnum_{q^n}[x]$, we set
\begin{equation}
  \label{eq:psiG}
  \psi_G^{(n)}(A) = \psi_G(\tr(A)), \mbox{ and }\chi^{(n)}(A) = \chi(N(A)),
\end{equation}
thus the Gauss sums $G(\chi^{(n)}, \psi_G^{(n)})$ modulo $H$ on $\fnum_{q^n}[x]$ is given by
\begin{equation}
  \label{eq:GaussEx}
  \psi(\chi^{(n)}, \psi_{G}^{(n)}) = \sum_{\substack{D\in \fnum_{q^n}[x]\\ D \moda H}}\chi^{(n)}(D)\psi_G^{(n)}(D),
\end{equation}
where the summation extends over a complete residue system modulo $H$ in $\fnum_{q^n}[x]$.

By the above notations, we may define a polynomial Ramanujan sum $\eta^{(n)}(G, H)$ modulo $H$ on $\fnum_{q^n}[x]$ by
\begin{equation}
  \label{eq:RamaEx}
  \eta^{(n)}(G, H) = \sum_{\substack{D\moda H\\ D\in \fnum_{q^n}[x]}}\chi_0^{(n)}(D)\psi_G^{(n)}(D),
\end{equation}
and a generalized version $\eta^{(n)}_k(G, H)$ by

\begin{equation}
  \label{eq:RamaExGen}
  \eta_k^{(n)}(G, H) = \sum_{\substack{D\moda H^k\\(D, H^k)_k=1}}\psi_G^{(n)}(D), 138
\end{equation}
where the summation ranges over a complete residue system modulo $H^k$ in $\fnum_{q^n}[x]$. 

If $\chi$ and $\psi_G$ not both are principal, in \cite{ref:49} we showed the following Davenport--Hasse type formula
\begin{equation}
  \label{eq:Daven-Hass}
  (-1)^{m-m_1}\frac{\phi^{(n)}(N)}{\phi^{(n)}(H)}G(\chi^{(n)}, \psi_G^{(n)})  = \left((-1)^{m-m_1}\frac{\phi(N)}{\phi(H)}G(\chi, \psi_G) \right)^n,
\end{equation}
where $\phi(H)$ is the Euler totient function on $\fnum_{q}[x]$, $\phi^{(n)}(H)$ is the function on $\fnum_{q^n}[x]$, $N=\frac{H}{(G, H)}$, $m=\deg(H)$, and $m_1=\deg(G, H)$. 

As a direct consequence of \eqref{eq:Daven-Hass}, if $H \nmid G$, then $\psi_G$ is not principal, and we have
\begin{equation}
  \label{eq:Daven-HassCon}
  (-1)^{m-m_1}\frac{\phi^{(n)}(N)}{\phi^{(n)}(H)}\eta^{(n)}(G, H)  = \left((-1)^{m-m_1}\frac{\phi(N)}{\phi(H)}\eta(G, H) \right)^n.
\end{equation}

The main purpose of this section is to show that the generalized version $\eta_k(H, G)$ also shares this kind of Davenport--Hasse type formula. We have
\begin{theorem}
  \label{thm:71}
If $H$ and $G$ are any polynomials in $\fnum_q[x]$ such that $H^k \nmid G$ and $H \neq 0$, then
\begin{equation}
  \label{eq:thm71}
  (-1)^{m-m_1}\frac{\phi_k^{(n)}(N)}{\phi_k^{(n)}(H)}\eta_k^{(n)}(G, H)  = \left((-1)^{m-m_1}\frac{\phi_k(N)}{\phi_k(H)}\eta_k(G, H) \right)^n,
\end{equation}
where $\phi_k(H)$ is the Jordan totient function on $\fnum_q[x]$, and $\phi_k^{(n)}(H)$ is the function on $\fnum_{q^n}[x]$, $N=\frac{H}{A}$, $A^k = (G, H^k)_k$, $m=\deg(H)$, and $m_1 = \deg(A)$. 
\end{theorem}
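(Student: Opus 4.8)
The plan is to show that, in this polynomial setting, the Davenport--Hasse identity \eqref{eq:thm71} is essentially a restatement of the H\"older formula together with an elementary count of irreducible factors under base change. First I would record that $\psi_G^{(n)}$ coincides with the canonical additive character modulo $H$ on $\fnum_{q^n}[x]$ attached to $G$. Take $\lambda^{(n)} = \lambda\circ\tr_{\fnum_{q^n}/\fnum_q}$ as the fixed non-principal character of $\fnum_{q^n}$ (non-principal because the trace is surjective and $\lambda$ is non-principal), and let $E^{(n)}(G,H)(A) = \lambda^{(n)}(t_G^{(n)}(A))$ be the corresponding construction over $\fnum_{q^n}[x]$, where $t_G^{(n)}$ is the analogue of $t_G$. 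Since $G,H\in\fnum_q[x]$, reduction modulo $H$ commutes with $\tr$ and extraction of the $x^{m-1}$-coefficient is $\tr$-compatible, so that $t_G(\tr A) = \tr\big(t_G^{(n)}(A)\big)$ for every $A\in\fnum_{q^n}[x]$; hence $\psi_G^{(n)}(A) = \psi_G(\tr A) = \lambda^{(n)}(t_G^{(n)}(A)) = E^{(n)}(G,H)(A)$. Consequently $\eta_k^{(n)}(G,H)$ is exactly the generalized polynomial Ramanujan sum \eqref{eq:ramaGen} formed over $\fnum_{q^n}[x]$, and every identity of the previous sections is available over $\fnum_{q^n}[x]$.

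Next I would observe that the g.c.d.\ of polynomials is unchanged under the extension $\fnum_q[x]\subset\fnum_{q^n}[x]$ (the Euclidean algorithm stays in the base field), so $A^k=(G,H^k)_k$, $N=H/A$, $m=\deg H$, $m_1=\deg A$ are the same whether read over $\fnum_q[x]$ or over $\fnum_{q^n}[x]$; in particular $m-m_1=\deg N$. Applying the H\"older formula (Theorem~\ref{thm:holder}) over each field gives $\frac{\phi_k(N)}{\phi_k(H)}\eta_k(G,H)=\mu(N)$ and $\frac{\phi_k^{(n)}(N)}{\phi_k^{(n)}(H)}\eta_k^{(n)}(G,H)=\mu^{(n)}(N)$, where $\mu^{(n)}$ is the M\"obius function on $\fnum_{q^n}[x]$. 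Substituting into \eqref{eq:thm71} reduces the theorem to the purely arithmetic identity
\begin{equation*}
(-1)^{\deg N}\mu^{(n)}(N) = \big((-1)^{\deg N}\mu(N)\big)^{n}.
\end{equation*}

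To finish I would split into two cases. If $N$ is not square-free over $\fnum_q[x]$ it has a repeated irreducible factor, which is still a repeated factor over $\fnum_{q^n}[x]$, so both sides vanish. If $N=P_1\cdots P_t$ with the $P_i$ distinct irreducibles of degrees $d_i$ over $\fnum_q$, then each $P_i$ (separable, since $\fnum_q$ is perfect) factors over $\fnum_{q^n}$ into $\gcd(d_i,n)$ distinct irreducibles, and the $P_i$ remain pairwise coprime, so $\mu(N)=(-1)^t$, $\mu^{(n)}(N)=(-1)^{\sum_i\gcd(d_i,n)}$, and $\deg N=\sum_i d_i$. The required identity then amounts to $\sum_i\big(d_i+\gcd(d_i,n)\big)\equiv n\sum_i(d_i+1)\pmod 2$, which I would verify term by term via $\gcd(d,n)\equiv 1+(1+d)(1+n)\pmod 2$ (equivalently: $\gcd(d,n)$ is even iff both $d$ and $n$ are even).

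The computation itself is entirely elementary; the step needing the most care is the opening identification of $\psi_G^{(n)}$ with the canonical additive character over $\fnum_{q^n}[x]$, since that is precisely what licenses the use of the H\"older formula there. Once that is in place, the ``Davenport--Hasse'' content collapses to the M\"obius-function identity above, which is why this formula looks so much simpler than its classical Gauss-sum counterpart.
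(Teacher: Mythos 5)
Your proof is correct and follows essentially the same route as the paper: both reduce \eqref{eq:thm71} via the H\"older formula (valid over $\fnum_{q^n}[x]$ because the $k$-th power g.c.d.\ is unchanged under the extension) to the M\"obius congruence $(-1)^{\deg N}\mu^{(n)}(N)=\bigl((-1)^{\deg N}\mu(N)\bigr)^n$, and then verify the parity identity $d+(d,n)\equiv n(d+1)\ (\moda\ 2)$ one irreducible factor at a time, using that $P$ splits into $(d,n)$ distinct irreducibles over $\fnum_{q^n}$. The only differences are cosmetic --- you factor the square-free $N$ directly where the paper invokes multiplicativity to reduce to $H=P^t$ --- and your opening identification of $\psi_G^{(n)}$ with the canonical additive character $E^{(n)}(G,H)$ over $\fnum_{q^n}[x]$ usefully makes explicit a step the paper leaves tacit.
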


\begin{proof}
  If $A^k = (G, H^k)_k$ in $\fnum_q[x]$, it is easy to verify that $A^k = (G, H^k)_k$ holds in $\fnum_{q^n}[x]$. By \eqref{eq:thmHolder}, we have
  \begin{equation}
    \label{eq:thm71p1}
    \eta_k^{(n)}(G, H) = \phi_k^{(n)}(H)\mu^{(n)}(N)\left(\phi_k^{(n)}(N) \right)^{-1},
  \end{equation}
and 
  \begin{equation}
    \label{eq:thm71p2}
    \eta_k(G, H) = \phi_k(H)\mu(N)\phi_k^{-1}(N),
  \end{equation}
where $\mu^{(n)}(H)$ is the M\"{o}bius function on $\fnum_{q^n}[x]$. To prove \eqref{eq:thm71}, it suffices to show that
\begin{equation}
  \label{eq:thm71p2}
  (-1)^{m+m_1}\mu^{(n)}(N) = \left((-1)^{m+m_1}\mu(N) \right)^n,
\end{equation}
where $N=\frac{H}{A}$, and $A^k = (G, H^k)_k$. 

We note that both sides of \eqref{eq:thm71p2} are multiplicative in $H$, so it suffices to prove \eqref{eq:thm71p2} when $H=P^t$, where $P$ is an irreducible in $\fnum_q[x]$. Let $\deg(P)=h$. Since $H^k\nmid G$, then $t\geq 1$. If $A=P^{t_1}$ with $t_1< t-1$, then both sides of \eqref{eq:thm71p2} are zero. Therefore, we may suppose $A=P^{t-1}$, and $N=P$. It is well-known that $P$ is product of exactly $(h, n)$ irreducibles in $\fnum_{q^n}[x]$, so \eqref{eq:thm71p2} becomes that
\begin{equation}
  \label{eq:thm71p3}
  (-1)^{th+(t-1)h+(h, n)} =  (-1)^{n(th+(t-1)h+1)},
\end{equation}
which is equivalent to
\begin{equation}
  \label{eq:thm71p4}
  h + (h, n) \equiv n(h+1) ~(\moda 2).
\end{equation}
It is easy to verify that \eqref{eq:thm71p4} is true for any positive integers $n$ and $h$, and we complete the proof of Theorem~\ref{thm:71}.
\end{proof}


\begin{thebibliography}{99}

{\small
\bibitem{ref:1}
E. Alkan, Distribution of averages of Ramanujan sums. \textit{Ramanujan J.} 29(1–3)(2012), 385--408. 

\bibitem{ref:2}
E. Alkan, Ramanujan sums and the Burgess zeta function, \textit{Int. J. Number Theory} 8(2012), 2069--2092.

\bibitem{ref:3}
D. Anderson and T. Apostol, The evaluation of Ramanujan’s sum and generalizations. \textit{Duke Math. J.} 20(1953), 211--216.

\bibitem{ref:4}
T. Apostol, Arithmetical properties of generalized Ramanujan sums, \textit{Pacific J. Math.} 41(2)(1972), 281--293. 

\bibitem{ref:5}
E. Balandraud,  An application of Ramanujan sums to equirepartition modulo an odd integer. \textit{Unif. Distrib. Theory} 2(2)(2007), 1--17. 

\bibitem{ref:6}
R. Carmichael, Expansions of arithmetical functions in infinite series. \textit{Proc. London Math. Soc.} 34(1932), 1--26.

\bibitem{ref:7}
L. Carlitz,  The singular for sums of squares of polynomials, \textit{Duke Math. J.} 14(1947), 1105--1120.

\bibitem{ref:8}
H. Chan and V. Kumchev, On sums of Ramanujan sums. \textit{Acta Arith.} 152(1)(2012), 1--10. 

\bibitem{ref:9}
J. Chidambaraswamy, Generalized Ramanujan's sum, \textit{Periodica Mathematica Hungarica} 10(1979), 71--88.

\bibitem{ref:10}
E. Cohen, An extension of Ramanujan’s sum. \textit{Duke Math. J.} 16(1949), 85--90.

\bibitem{ref:11}
E. Cohen, An extension of Ramanujan’s sum II. Additive properties, \textit{Duke Math. J.} 22(1955), 543--559.

\bibitem{ref:12}
E. Cohen, An extension of Ramanujan’s sum III. Connections with totient functions, \textit{Duke Math. J.} 23(1956), 623--630.

\bibitem{ref:13}
E. Cohen, Some totient functions, \textit{Duke Math. J.} 23(1956), 515--522.

\bibitem{ref:14}
E. Cohen, Trigonometric sums in elementary number theory. \textit{Am. Math. Mon.} 66 (1959), 105--117.

\bibitem{ref:15}
P. Diaconis and M. Isaacs, Supercharacters and superclasses for algebra groups. \textit{Trans. Am. Math. Soc.} 360(5) (2008), 2359--2392.

\bibitem{ref:16}
G. Donovan and D. Rearick, On Ramanujan's sum, \textit{Det Kgl. Norske Vidensk Selsk. Fordhandlinger} 39(1966), 1--2.

\bibitem{ref:17}
A.Droll,  A classification of Ramanujan unitary Cayley graphs. \textit{Electron. J. Comb.} 17(1)(2010). Note 29.

\bibitem{ref:18}
H. Davenport, \textit{Multiplicative Number Theory}.  Vol.74 of GTM, Springer-Verlag,  NewYork, third edition, 2000. Revised and with a Preface by H.L. Montgomery.

\bibitem{ref:19}
P. Erdős and C. Vaughan, Bounds for the $r$-th coefficients of cyclotomic polynomials. \textit{J. Lond. Math. Soc.} 8(1974), 393--400.

\bibitem{ref:20}
C. Fowler, S. Garcia, G. Karaali, Ramanujan sums as super-characters, \textit{Ramanujan J}. 35(2014), 205--241.

\bibitem{ref:21}
Y. Fujisawa, On sums of generalized Ramanujan sums. \textit{Indian J. Pure $\&$ Applied Math.} 46(2013), 1--10. 

\bibitem{ref:22}
A. Grytczuk, On Ramanujan sums on arithmetical semigroup, \textit{Tsukuba. J. Math}. 16(2)(1992), 315--319.

\bibitem{ref:23}
D. Hayes, The distribution of irreducibles in $GF[q,x]$, \textit{Trans. Amer. Math. Soc.} 117(1965), 1017--1033.

\bibitem{ref:24}
R. K. Johnson, A Reciprocity law for Ramanujan Sums. \textit{Pac. J. Math.} 98(1)(1982), 99--105.

\bibitem{ref:25}
R. K. Johnson, Reciprocity in Ramanujan's Sums. \textit{Math. Mag.} 59(4)(1986), 216-222.

\bibitem{ref:26}
M.Jutila, Distribution of rational numbers in short intervals. \textit{Ramanujan J.} 14(2)(2007), 321--327.

\bibitem{ref:27}
I. Kiuchi and Y. Tanigawa, On arithmetic functions related to the Ramanujan sum, \textit{Period. Math. Hungar.} 45(1–2)(2002), 87--99.

\bibitem{ref:28}
J. Konvalina,, A generalization of Waring's formula. \textit{J. Comb. Theory, Ser. A} 75(2)(1996), 281--294.

\bibitem{ref:29}
D. H. Lehmer, Mahler’s matrices. \textit{J. Aust. Math. Soc.} 1(1959/1960), 385--395.

\bibitem{ref:30}
L. Lucht, A survey of Ramanujan expansions. \textit{Int. J. Number Theory} 6(8)(2010), 1785--1799.

\bibitem{ref:31}
K. Motose, Ramanujan’s sums and cyclotomic polynomials. \textit{Math. J. Okayama Univ.} 47(2005), 65--74.

\bibitem{ref:32}
K. Namboothiri, Certain weighted averages of generalized Ramanujan sums, to appear.

\bibitem{ref:33}
V. C. Nanda,  Generalizations of Ramanujan’s sum to matrices. \textit{J. Indian Math. Soc.} 48(1–4)(1986), 177--187.

\bibitem{ref:34}
C. A. Nicol,  Some formulas involving Ramanujan sums. \textit{Can. J. Math.} 14(1962), 284--286.

\bibitem{ref:35}
W. G. Nowak, The average size of Ramanujan sums over quadratic number fields, \textit{Arch. Math.} 99(2012), 433--442. 

\bibitem{ref:36}
M. Planat, M. Minarovjech, M. Saniga, Ramanujan sums analysis of long-period sequences and 1/f noise. \textit{Europhys. Lett.} 85(2009), 40005. 

\bibitem{ref:37}
M. Planat, H. Rosu, S. Perrine, Ramanujan sums for signal processing of low-frequency noise. \textit{Phys. Rev. A} 66(5)(2002), 056128.

\bibitem{ref:38}
M. Planat and H. C. Rosu, Cyclotomy and Ramanujan sums in quantum phase locking. \textit{Phys. Lett. A} 315(1–2)(2003), 1--5.

\bibitem{ref:39}
S. Ramanujan, On certain trigonometrical sums and their applications in the theory of numbers, \textit{Cambridge Philosophical Transactions} 22 (1918), 259--276. 

\bibitem{ref:40}
K. G. Ramanathan and M. V. Subbarao, Some generalizations of Ramanujan’s sum. \textit{Can. J. Math.} 32(5)(1980), 1250--1260.

\bibitem{ref:41}
K. N. Rao and R. Sivaramakrishnan, Ramanujan’s sum and its applications to some combinatorial problems. In: \textit{Proceedings of the Tenth Manitoba Conference on Numerical Mathematics and Computing}, Vol. II, Winnipeg, Man., 1980, vol. 31(1981), 205--239.

\bibitem{ref:42}
N. Robles, and A. Roy, Moments of averages of generalized Ramanujan sums, \textit{Monatshefte Fur Math}. 2015, 1--29.

\bibitem{ref:43}
M. Rosen, \textit{Number Theory in Function Fields}, Vol. 210 of GTM, Springer-Vorlag, New York, Heidelborg, Berlin, 2002.

\bibitem{ref:44}
M. Sugunamma, Eckford Cohen's generalizations of Ramanujan's trigonometrical sum $C(n,r)$. \textit{Duke Math. J.} 27(1960), 323--330.

\bibitem{ref:45}
T.-Y. Tam, On the cyclic symmetry classes. \textit{J. Algebra} 182(3)(1996), 557--560.

\bibitem{ref:46}
L. T\'{o}th,  Some remarks on Ramanujan sums and cyclotomic polynomials. \textit{Bull. Math. Soc. Sci. Math. Roum.} 53(101)(2010), 277--292.

\bibitem{ref:47}
L. T\'{o}th, Averages of Ramanujan sum: Note on two papers by E. Alkan, \textit{Ramanujan J.} 35 (2014), 149--156.

\bibitem{ref:48}
C. S. Venkataraman and R. Sivaramakrishnan, An extension of Ramanujan’s sum, \textit{Math. Student} 40A(1972), 211--216.

\bibitem{ref:49}
Zhiyong Zheng, Davenport--Hasse's theorem for polynomial Gauss sums over finite fields, to appear.
}

\end{thebibliography}

\end{document}